\numberwithin{equation}{section}
\newtheorem{theorem}{Theorem}[section]
\newtheorem{definition}[theorem]{Definition}
\newtheorem{lemma}[theorem]{Lemma}
\newtheorem{corollary}[theorem]{Corollary}
\theoremstyle{definition}
\newtheorem{remark}[theorem]{Remark}
\newtheorem{example}[theorem]{Example}
\newcommand{\unboundednessComment}[1]{}
\newcommand{\mc}[1]{\mathcal{#1}}
\newcommand{\tn}[1]{\textnormal{#1}}
\newcommand{\avint}{-\hspace{-2.5ex}\int}
\renewcommand{\c}{c} 
\newcommand{\cRad}{\ell} 
\newcommand{\Floc}{F}
\newcommand{\Fint}{\mc{F}}
\newcommand{\FlocKL}{F_{\tn{KL}}}
\newcommand{\cHK}{c_{\tn{HK}}}
\renewcommand{\d}{\tn{d}}
\newcommand{\C}{C}
\newcommand{\resid}{R}
\newcommand{\measp}{{\mathcal M_+}}
\newcommand{\cont}{\mathcal{C}}
\newcommand{\prob}{{\mathcal P}}
\newcommand{\RadNik}[2]{\tfrac{\d #1}{\d #2}}
\newcommand{\pushforward}[2]{{#1}_{\#} #2}
\newcommand{\Lebesgue}{\mc{L}}
\newcommand{\restr}{{\mbox{\LARGE$\llcorner$}}}
\newcommand{\hd}{{\mathcal H}}
\newcommand{\dist}{{\mathrm{dist}}}
\newcommand{\diam}{{\mathrm{diam}}}
\newcommand{\dom}{{\mathrm{dom}}}
\DeclareMathOperator{\spt}{spt}
\newcommand{\E}{\mc{E}} 
\newcommand{\HK}{\tn{HK}}
\newcommand{\WOT}{W_{\tn{OT}}}
\newcommand{\W}{W}
\DeclareMathOperator{\Hell}{Hell}
\newcommand{\G}{\mc{G}} 
\newcommand{\R}{\mathbb{R}}
\newcommand{\N}{\mathbb{N}}
\DeclareMathOperator{\Id}{Id}
\newcommand{\Hex}{H}
\newcommand{\EDens}{B}
\begin{document}
\author{D.~P.~Bourne\footnote{Heriot-Watt University, Edinburgh, UK; D.Bourne@hw.ac.uk}, $\;$ B.~Schmitzer\footnote{University of G\"ottingen, Germany; Schmitzer@cs.uni-goettingen.de}, $\;$ B.~Wirth\footnote{University of M\"unster, Germany; Benedikt.Wirth@uni-muenster.de}}
\title{Semi-discrete unbalanced optimal transport and quantization}

\maketitle

\begin{abstract}
In this paper we study the class of optimal entropy-transport problems introduced by Liero, Mielke and Savar\'e in Inventiones Mathematicae 211 in 2018.
This class of unbalanced transport metrics allows for transport between measures of different total mass, unlike classical optimal transport where both measures must have the same total mass. In particular, we develop the theory for the important subclass of semi-discrete unbalanced transport problems, where one of the measures is diffuse (absolutely continuous with respect to the Lebesgue measure) and the other is discrete (a sum of Dirac masses). We characterize the optimal solutions and show they can be written in terms of generalized Laguerre diagrams. We use this to develop an efficient method for solving the semi-discrete unbalanced transport problem numerically. As an application we study the unbalanced quantization problem, where one looks for the best approximation of a diffuse measure by a discrete measure with respect to an unbalanced transport metric.
We prove a type of crystallization result in two dimensions -- optimality of a locally triangular lattice with spatially varying density -- and compute the asymptotic quantization error as the number of Dirac masses tends to infinity.
\end{abstract}

\section{Introduction}
\label{sec:Motivation}
In this paper we study \emph{semi-discrete unbalanced} optimal transport problems: What is the optimal way of transporting a diffuse measure to a discrete measure (hence the name \emph{semi-discrete}), where the two measures may have different total mass (hence the name \emph{unbalanced})? As an application we study the \emph{unbalanced quantization problem}: What is the best approximation of a diffuse measure by a discrete measure with respect to an unbalanced transport metric?

\subsection{Unbalanced optimal transport}
Classical optimal transport theory asks for the most efficient way to rearrange mass
between two given probability distributions. Its origin goes back to 1781 and the French engineer Gaspard Monge, who was interested in the question of how to transport and reshape a pile of earth to form an embankment at minimal effort. It took over 200 years to develop a complete mathematical understanding of this problem, even to answer the question of whether there exists an optimal way of redistributing mass. Since the  mathematical breakthroughs of the 1980s and 1990s, the field of optimal transport theory has thrived and found applications in
crowd and traffic dynamics, economics, geometry, image and signal processing, machine learning and data science, PDEs, and statistics.
Depending on the context, mass may represent the distribution of particles (people or cars), supply and demand, population densities, etc.
For thorough introductions see, e.g., \cite{Galichon2016,PeyreCuturi2018,Santambrogio-OTAM,Villani-OptimalTransport-09}.

In classical optimal transport theory the initial and target measures must have the same total mass. In applications this is not always natural.
Changes in mass may occur due to creation or annihilation of particles or a mismatch between supply and demand.
Therefore so-called \emph{unbalanced} transport problems, accounting for such differences, have recently received increased attention \cite{Figalli2010,KMV-OTFisherRao-2015,ChizatOTFR2015,LieroMielkeSavare-HellingerKantorovich-2015a}. Brief overviews and discussions of various formulations can be found, for instance, in \cite{ChizatDynamicStatic2018,SchmitzerWirth-DynamicW1-2017}.
Further theoretical properties are examined in \cite{LaMi2017,LMS-GeodesicConvexity}, examples for applications in data analysis can be found in \cite{Lavenant2021,LinHK2021,UnbalancedGWBrain2022}.
In this article we study the class of unbalanced transport problems called \emph{optimal entropy-transport problems} from \cite{LieroMielkeSavare-HellingerKantorovich-2015a}; see \cref{def:UnbalancedTransport}. In particular, we develop this theory for the special case of semi-discrete transport.

\subsection{Semi-discrete transport}
Semi-discrete optimal transport theory is about the best way to transport a diffuse measure, $\mu \in L^1(\Omega)$, $\Omega \subset \R^d$, to a discrete measure, $\nu = \sum_{i=1}^M m_i \delta_{x_i}$. These type of problems arise naturally, for instance, in economics in computing the distance between a population with density $\mu$ and a resource with distribution $\nu = \sum_{i=1}^M m_i \delta_{x_i}$, where $x_i \in \Omega$ represent the locations of the resource and $m_i > 0$ represent the size or capacity of the resource. The classical semi-discrete optimal transport problem, where $\mu$ and $\nu$ are probability measures, has a nice geometric characterization. For example, for $p \in [1,\infty)$, the Wasserstein-$p$ metric $W_p$ is defined by
\[
W_p(\mu,\nu) = \min \left\{ \sum_{i=1}^M \int_{T^{-1}(x_i)} |x-x_i|^p \mu(x) \, \d x \, \right. \left| \, T:\Omega \to \{ x_i \}_{i=1}^M, \, \int_{T^{-1}(x_i)} \mu(x) \, \d x = m_i \right\}^{1/p}
\]
where $\sum_{i=1}^M m_i = \int_{\Omega} \mu(x) \, \d x = 1$.
This is an optimal partitioning (or assignment) problem, where the domain $\Omega$ is partitioned into the regions $T^{-1}(x_i)$ of mass $m_i$, $i \in \{1,\ldots,M\}$, and each point $x \in  T^{-1}(x_i)$ is assigned to point $x_i$. For example, in two dimensions, $\Omega$ could represent a city, $\mu$ the population density of children,  $x_i$ and $m_i$ the location and size of schools, $T^{-1}(x_i)$ the catchment areas of the schools, and $W_p(\mu,\nu)$ the cost of transporting the children to their assigned schools. If $p=2$, it turns out that the optimal partition $\{T^{-1}(x_i)\}_{i=1}^M$ is a \emph{Laguerre diagram} or \emph{power diagram}, which is a type of weighted Voronoi diagram: There exist weights $w_1,\ldots,w_M \in \R$ such that
\[
\overline{T^{-1}(x_i)} = \{ x \in \Omega \,|\, |x-x_i|^2 - w_i \leq |x-x_j|^2 - w_j \, \forall \, j \in \{1,\ldots,M\} \}.
\]
The transport cells $T^{-1}(x_i)$ are the intersection of convex polytopes (polygons if $d=2$, polyhedra if $d=3$) with $\Omega$. The weights $w_1,\ldots,w_M \in \R$ can be found by solving an unconstrained concave maximization problem. If $p=1$, the optimal partition $\{T^{-1}(x_i)\}_{i=1}^M$ in an Apollonius diagram. See, e.g., \cite[Sec.~6.4]{AurenhammerKleinLee}, \cite[Chap.~5]{Galichon2016}, \cite{KitagawaMerigotThibert,MerigotThibertOT}, \cite[Chap.~5]{PeyreCuturi2018}, and \cref{sec:BackgroundSemiDisc} below, where we summarize the main results from classical semi-discrete optimal transport theory.
Applications of semi-discrete transport include
fluid mechanics \cite{GallouetMerigot18,Gallouet:2022}, microstructure modelling \cite{BKRS20,BFRSB24}, optics \cite{MeyronMerigotThibert2018}, 
 and the Lagrangian discretization of Wasserstein gradient flows \cite{LeclercFlows2020} and mean field games \cite{SarrazinMFG2020}.

In \cref{sec:semiDiscreteUnbalanced} we extend these results to unbalanced transport, where $\mu$ and $\nu$ no longer need to have the same total mass, and the Wasserstein-$p$ metric is replaced by the unbalanced transport metric $W$ from \cref{def:UnbalancedTransport}. We prove that, also in the unbalanced case, the optimal partition is a type of generalized Laguerre diagram and it can be found by solving a concave maximization problem for a set of weights $w_1,\ldots,w_M$; see \cref{thm:UnbalancedTessellationDual,thm:UnbalancedOptimalityConditions}. This problem is natural from a modelling perspective, for example to describe a mismatch between the demand of a population $\mu$ and the supply of a resource $\nu$, and to model the prioritization of high-density regions at the expense of areas with a low population density.

For unbalanced transport, there is no one, definitive transport cost, but many models are conceivable. As a first application of our theory of semi-discrete unbalanced transport, in \cref{exm:modelComparison,ex:LengthScales}, we use it to compare different unbalanced transport models.
As a second application, in \cref{sec:quantization}, we apply it to the quantization problem.

\subsection{Quantization}
Quantization of measures refers to the problem of finding the best approximation of a diffuse measure by a discrete measure \cite{GrafLuschgy2000}, \cite[Sec.~33]{Gruber07}. For example, the classical quantization problem with respect to the Wasserstein-$p$ metric, $p \in [1,\infty)$, is the following: Given $\mu \in L^1(\Omega)$, $\Omega \subset \R^d$, $\int_\Omega \mu(x) \, \d x = 1$, find a discrete probability measure $\nu = \sum_{i=1}^M m_i \delta_{x_i}$ that gives the best approximation of $\mu$ in the Wasserstein-$p$ metric,
\begin{equation}
\label{eq:ClassicalQuantization}
Q^M_p(\mu) = \min \left\{ W_p^p(\mu,\nu) \, \Bigg| \, \nu = \sum_{i=1}^M m_i \delta_{x_i}, \; x_1,\ldots,x_M \in \Omega, \; m_i >0, \; \sum_{i=1}^M m_i=1 \right\}.
\end{equation}
We call $Q^M_p$ the \emph{quantization error}.
Problems of this form arise in a wide range of applications including economic planning and optimal location problems \cite{BollobasStern72,BouchitteJimenezMahadevan2011,ButtazzoSantambrogio2009}, finance \cite{PagesPhamPrintems2004}, numerical integration \cite[Sec.~2.2]{DuFaberGunzburger99}, \cite[Sec.~2.3]{PagesPhamPrintems2004}, energy-driven pattern formation \cite{BournePeletierRoper,LarssonChoksiNave2016}, and approximation of initial data for particle (meshfree) methods for PDEs. 
 An approach to quantization using gradient flows is given in
\cite{CagliotiGolseIacobelli2015,Iacobelli2018}.
We mention a few important variations on the classical quantization problem. The case where the masses $m_1,\ldots,m_M$ are fixed and the minimisation in \eqref{eq:ClassicalQuantization} is only taken over $x_1,\ldots,x_M$ is considered for example in  
\cite{BKRS20,MerigotSantambrogioSarrazin,LevyCentroidalPower}. 
The case where $\mu$ is a discrete measure, with support of cardinality $N \gg M$, has applications in image and signal compression \cite{DuGunzburgerJuWang2006,GershoGray1992} and data clustering ($k$-means clustering) \cite{MacQueen67,ThorpeTheilJohansenCade15}.
If $\nu$ is a one-dimensional measure (supported on a set of Hausdorff dimension $1$), then the quantization problem is known as the \emph{irrigation problem}
\cite{LuSlepcev2013,MosconiTilli2005}. In this paper we consider the variation where the Wasserstein-$p$ metric in \eqref{eq:ClassicalQuantization} is replaced by an unbalanced transport metric.

It can be shown that the quantization problem \eqref{eq:ClassicalQuantization} can be rewritten as an optimization problem in terms of the particle locations $\{ x_i \}_{i=1}^M$ and their Voronoi tessellation:
\begin{equation}
\label{eq:ClassicalQuantizationTessellation}
Q^M_p(\mu) = \min \left\{ J(x_1,\ldots,x_M) \, | \,  x_1,\ldots,x_M \in \Omega \right\}
\end{equation}
where
\[
J(x_1,\ldots,x_M) = \sum_{i=1}^M \int_{V_i(x_1,\ldots,x_M)} |x-x_i|^p \mu(x) \, \d x
\]
and where $\{ V_i \}_{i=1}^M$ is the Voronoi diagram generated by $\{x_i\}_{i=1}^M$,
\[
V_i = V_i(x_1,\ldots,x_M) = \left\{ x \in \Omega\,\middle|\, |x-x_i| \leq |x-x_j| \tn{ for all } j\in\{1,\ldots,M\} \right\}.
\]
If $(x_1, \ldots , x_M )$ is a global minimizer of $J$, then $\sum_{i=1}^M \left(  \int_{V_i} \mu \, \d x \right) \delta_{x_i}$ is an optimal quantizer of $\mu$ with respect to the Wasserstein-$p$ metric.
See for instance \cite[Sec.~4.1]{BournePeletierRoper}, \cite[Sec.~7]{Kloeckner2012} and \cref{thm:quantizationTessellation}.
In the vector quantization (electrical engineering) literature $J$ is known as the \emph{distortion} of the quantizer \cite{GershoGray1992}.

The quantization problem with respect to the Wasserstein-2 metric is particularly well studied. In this case it can be shown that critical points of $J$ are generators of \emph{centroidal Voronoi tessellations} (CVTs) of $M$ points \cite{DuFaberGunzburger99}; this means that $\nabla J(x_1,\ldots,x_M)=0$ if and only if $x_i$ is the centre of mass of its own Voronoi cell $V_i$ for all $i$,
\begin{equation}
\label{eq:CVTs}
x_i = \frac{\displaystyle \int_{V_i(x_1,\ldots,x_M)} x \mu(x) \, \d x}{\displaystyle \int_{V_i(x_1,\ldots,x_M)} \mu(x) \, \d x}, \quad i \in \{1,\ldots,M\}.
\end{equation}
In general there does not exist a unique CVT of $M$ points, as illustrated in \cref{fig:CVTs}, and $J$ is non-convex with many local minimisers for large $M$.
Equation \eqref{eq:CVTs} is a nonlinear system of equations for $x_1,\ldots,x_M$.
A simple and popular method for computing CVTs is Lloyd's algorithm \cite{DuFaberGunzburger99,EmelianenkoJuRand08,Lloyd82,SabinGray86}, which is a fixed point method for solving the Euler--Lagrange equations \eqref{eq:CVTs}.

\begin{figure}[hbt]
	\centering
	\includegraphics[width=0.4\textwidth,trim=35 15 0 0,clip]{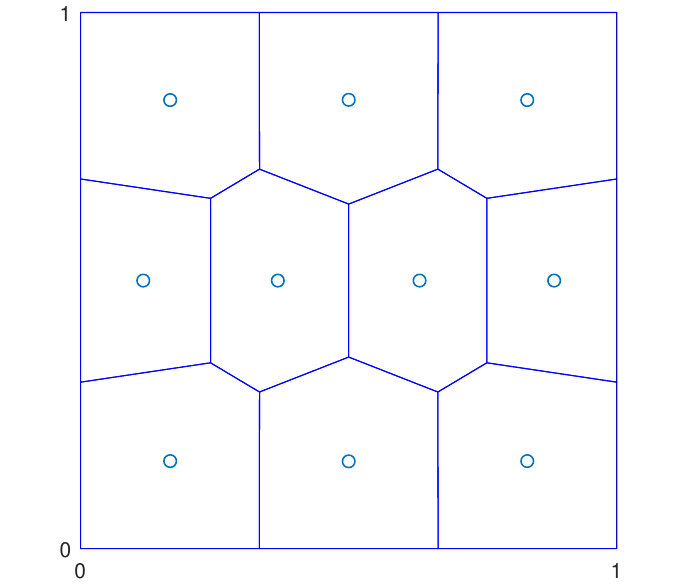}
  \includegraphics[width=0.4\textwidth,trim=35 15 0 0,clip]{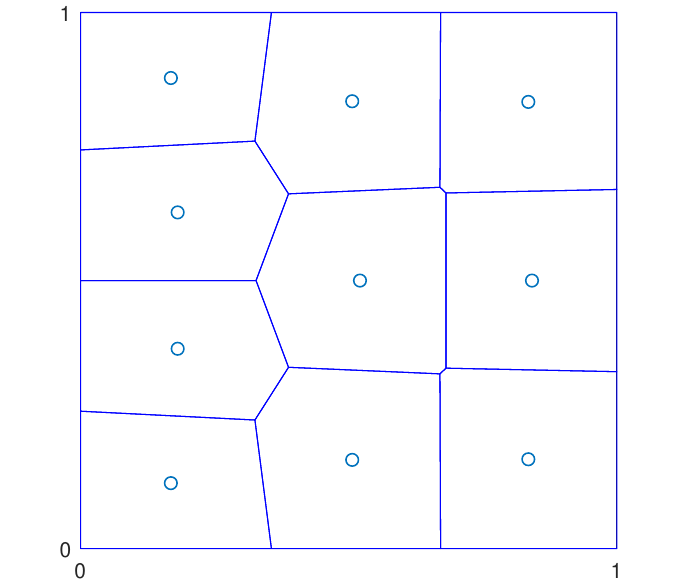}
	\caption{Two (approximate) centroidal Voronoi tessellations (CVTs) of 10 points for the uniform density $\mu=1$ on a unit square. The polygons are the centroidal Voronoi cells $V_i$ and the circles are the generators $x_i$. The CVTs were computed using Lloyd's algorithm. The CVT on the left has a lower energy $J$ than the CVT on the right. The corresponding quantizer $\nu=\sum_{i=1}^{10} m_i \delta_{x_i}$ of $\mu$ is reconstructed from the CVT by taking $m_i$ as the areas of the centroidal Voronoi cells and $x_i$ as their generators.}
	\label{fig:CVTs}
\end{figure}

In \cref{sec:Quantization,sec:QuantizationNumerics} we extend these results to unbalanced quantization, where the Wasserstein-$p$ metric in \eqref{eq:ClassicalQuantization} is replaced by the unbalanced transport metric $W$ (defined in equation \eqref{eq:UnbalancedProblem}) and where $\mu$ and $\nu$ need not have the same total mass.  In \cref{thm:quantizationTessellation} we prove an expression of the form $\eqref{eq:ClassicalQuantizationTessellation}$, which states that the unbalanced quantization problem can be reduced to an optimization problem for the locations $x_1,\ldots,x_M$ of the Dirac masses. This optimization problem is again formulated in terms of the Voronoi diagram generated by $x_1,\ldots,x_M$. In \cref{sec:QuantizationNumerics} we solve the unbalanced quantization problem numerically, which includes extending Lloyd's algorithm to the unbalanced case.

We conclude the paper in \cref{sec:Crystallization} by studying the asymptotic unbalanced quantization problem: What is the optimal configuration of the particles $x_1,\ldots,x_M$ as $M \to \infty$, and how does the quantization error scale in $M$?
Consider for example the classical quantization problem \eqref{eq:ClassicalQuantization} with $p=2$, $|\Omega|=1$, $\mu=1$ (i.e., $\mu$ is the Lebesgue measure on $\Omega$), and $M$ fixed. From above, we know that an optimal quantizer $\nu$ corresponds to an optimal CVT of $M$ points, where \emph{optimal} means that the CVT has lowest energy $J$ amongst all CVTs of $M$ points. Gersho \cite{Gersho79} conjectured that, as $M \to \infty$, the Voronoi cells of the optimal CVT asymptotically have the same shape, i.e., asymptotically they are translations and rescalings of a single polytope. In two dimensions ($d=2$) various versions of Gersho's Conjecture have been proved independently by several authors
\cite{BollobasStern72,Gruber99,MorganBolton02,Newman1982,GFejesToth01,FejesToth72}.
Roughly speaking, it has been shown that
the hexagonal tiling is optimal as $M \to \infty$. In other words, arranging the seeds $x_1,\ldots,x_M$ in a regular triangular lattice is asymptotically optimal.
This crystallization result can be stated more precisely as follows: If $\Omega$ is a convex polygon with at most 6 sides, then
\begin{equation}
\label{eq:classical_crystallizationLB}
J(x_1,\ldots,x_M) \ge \frac{5 \sqrt{3}}{54} \frac{1}{M}
\end{equation}
where the right-hand side is the energy of a regular triangular lattice of $M$ points such that the Voronoi cells $V_i$ are regular hexagons of area $1/M$.
In general this lower bound is not attained for finite $M$ (unless $\Omega$ is a regular hexagon and $M=1$), but it is attained in limit $M \to \infty$: 
\begin{equation}
\label{eq:classical_crystallization_limit}
\lim_{M \to \infty} M \cdot Q_2^M(1) = \lim_{M \to \infty} M \cdot \min_{x_i \in \Omega} J(x_1,\ldots,x_M) = \frac{5 \sqrt{3}}{54}.
\end{equation}
See the references above or \cite[Thm.~5]{BournePeletierTheil14}.
We generalise \eqref{eq:classical_crystallizationLB} and \eqref{eq:classical_crystallization_limit} to the unbalanced quantization problem in \cref{thm:lowerBound} and \cref{thm:asymptoticQuantization}, respectively. Roughly speaking, we show that again for $\mu=1$ the triangular lattice is optimal in the limit $M \to \infty$. For general $\mu \in L^1(\Omega)$, 
it is asymptotically optimal for the particles to locally form a triangular lattice with density determined by a nonlocal function of $\mu$.

While our quantization results are limited to two dimensions, this is also largely true for the classical quantization problem. In three dimensions it
is not known whether Gersho's Conjecture holds, although there is some numerical evidence for the case $p=2$ that optimal CVTs of $M$ points tend as $M \to \infty$ to the Voronoi diagram of the body-centered cubic (BCC) lattice, where each Voronoi cell is congruent to a truncated octahedron \cite{DuWang2005}. See also \cite{ChoLu}. For $p=2$ it has been proved that, \emph{amongst lattices}, the BCC lattice is optimal \cite{BarnesSloane1983}. 

For general $p$, $d$ and $\mu$, the scaling of the quantization error is known even if the optimal quantizer is not; Zador's Theorem \cite{Za82}, \cite[Cor.~33.3]{Gruber07} states that 
\begin{equation}
\label{eq:Zador}
\lim_{M \to \infty}
M^{\frac pd} \cdot Q^M_p(\mu) = 
c(p,d)
\,
\| \mu \|_{L^{\frac{d}{d+p}}(\Omega)}
\end{equation}
where the constant $c(p,d)$ is characterised by 
\[
c(p,d) = 
\lim_{M \to \infty} M^{\frac pd} \cdot Q^M_p(\Lebesgue \restr{[0,1]^d}),
\]
and where $\Lebesgue$ is the $d$-dimensional Lebesgue measure. For a modern proof using $\Gamma$-convergence see \cite{BouchitteJimenezRajesh2002} and \cite[Proposition 7.21]{Santambrogio-CV}.
For generalisations to quantization on Riemannian manifolds see \cite{Gr04}, \cite[Thm.~1.2]{Kloeckner2012} and \cite{AydinIacobelli}. It is an open problem to compute the optimal constant $c(p,d)$ except for $d=1$ and $d=2$, where 
\begin{equation}
\label{eq:Cp2}    
c(p,1) = \int_{-1/2}^{1/2} |x|^p \, \d x, \qquad
c(p,2) = \int_{\Hex(1)} |x|^p \, \d x,
\end{equation}
where $\Hex(1)$ is a regular hexagon of area $1$ centred at the origin $0$.
We recover Zador's Theorem for the case $d=2$, along with the optimal constant $c(p,2)$, as a special case of \cref{thm:asymptoticQuantization}; see \cref{example:Zador}.

\subsection{Outline and contribution}
\Cref{sec:background} collects relevant results from classical, unbalanced, and semi-discrete transport,
which will be generalized in \cref{sec:semiDiscreteUnbalanced} to the case of semi-discrete unbalanced transport. Finally, \Cref{sec:quantization} considers the unbalanced quantization problem.

In more detail, the contributions of this article are the following.
\begin{itemize}
\item \textbf{\Cref{sec:Tessellation}:}
We extend semi-discrete transport theory to the unbalanced case, most importantly a simple, geometric tessellation formulation (\cref{thm:UnbalancedTessellationDual}), optimality conditions that fully characterize primal and dual solutions (\cref{thm:UnbalancedOptimalityConditions}), and additional different primal and dual convex formulations.
Unlike in the balanced case, the dual potentials associated with the discrete mass locations do not only determine the tessellation of the continuous measure, but also the density of the optimal transport plan.
Particular attention needs to be paid to areas where the ground transport cost function becomes infinite.
Special cases of these results were derived in \cite{LeclercFlows2020,SarrazinMFG2020} to study a Lagrangian discretization of Wasserstein gradient flows and variational mean field games.
\item \textbf{\Cref{sec:UnbalancedNumerics}:}
We develop numerical algorithms for solving the semi-discrete unbalanced transport problem
and numerically illustrate novel phenomena of unbalanced transport (\cref{exm:modelComparison}).
In particular, we show qualitative differences between different unbalanced transport models
and examine the effect of changing the length scale, which typically is intrinsic to unbalanced transport models.
\item \textbf{\Cref{sec:Quantization,sec:QuantizationNumerics}:}
We extend the theory of optimal transport-based quantization of measures to unbalanced transport,
deriving in particular an equivalent Voronoi tessellation problem (\cref{thm:quantizationTessellation}),
which turns out to be a natural generalization of the known corresponding formulation in classical transport.
The interesting fact here is that the simple geometric Voronoi tessellation structure survives when passing from balanced to unbalanced transport, but the mass of the generating points now depends in a more complex way on the mass within their cells.
We also illustrate unbalanced quantization numerically, extending the standard algorithms (including Lloyd's algorithm) to the unbalanced case.
\item \textbf{\Cref{sec:Crystallization}:}
In two spatial dimensions, where crystallization results from discrete geometry are available,
we derive the optimal asymptotic quantization cost and the optimal asymptotic point density for quantizing a given measure $\mu$ using unbalanced transport (\cref{thm:asymptoticQuantization}).
Our result includes Zador's Theorem for classical, balanced quantization as a special case; see \cref{example:Zador}.
As is common in asymptotic quantization, we consider a spatial rescaling of the domain as the number of points increases and the most interesting regime is where the rescaled point density converges to a non-zero, finite limit.
While in the balanced case, the rescaled asymptotic cost only depends on the growth behaviour of the transport ground cost function, in the unbalanced setting we now observe an interplay between the rescaled point density and the intrinsic length scale of unbalanced transport.
An interesting, novel effect in this unbalanced setting is that the optimal point density depends nonlocally on the global mass distribution
in such a way that whole regions with positive measure may be completely neglected in favour of regions with higher mass.
\end{itemize}

\subsection{Setting and notation}
\label{sec:setting}
Throughout this article we work in a domain $\Omega = \overline{U}$ for $U \subset \R^d$ open.
(In principle, the results could be extended to more general metric spaces such as Riemannian manifolds.)
The Euclidean distance on $\R^d$ is denoted $d(\cdot,\cdot)$,
and we will write $\pi_i:\Omega \times \Omega \to \Omega$, for the projections $\pi_i(x_1,x_2)=x_i$, $i=1,2$.
The ($d$-dimensional) Lebesgue measure of a measurable set $A\subset\R^d$ will be indicated by $\Lebesgue(A)$ or $|A|$ for short, its diameter by $\diam(A)$.

By $\measp(\Omega)$ we denote the set of nonnegative Radon measures on $\Omega$,
and $\prob(\Omega)\subset\measp(\Omega)$ is the subset of probability measures.
The notation $\mu\ll\nu$ for two measures $\mu,\nu\in\measp(\Omega)$ indicates absolute continuity of $\mu$ with respect to $\nu$,
and the corresponding Radon--Nikodym derivative is written as $\RadNik{\mu}{\nu}$.
The restriction of $\mu\in\measp(\Omega)$ to a measurable set $A\subset\R^d$ is denoted $\mu\restr A$, and its support is denoted $\spt\mu$.
For a Dirac measure at a point $x\in\R^d$ we write $\delta_x$.
The pushforward of a measure $\mu$ under a measurable map $T$ is denoted $\pushforward{T}\mu$.

The spaces of Lebesgue integrable functions on $U$ or of $\mu$-integrable functions with $\mu\in\measp(\Omega)$ are denoted $L^1(U)$ and $L^1(\mu)$, respectively. Continuous functions on $\Omega$ are denoted by $\mathcal{C}(\Omega)$.

\section{Background}\label{sec:background}
The purpose of this section is a short introduction to classical, unbalanced, and semi-discrete transport.

\subsection{Optimal transport}
Here we briefly recall the basic setting of optimal transport. For a thorough introduction we refer, for instance, to \cite{Santambrogio-OTAM,Villani-OptimalTransport-09}.
For $\mu$, $\nu \in \prob(\Omega)$ the set
\begin{align}
\label{eq:Coupling}
\Gamma(\mu,\nu) = \{ \gamma \in \prob(\Omega \times \Omega)\,|\,\pushforward{\pi_1}{\gamma}=\mu,\, \pushforward{\pi_2}{\gamma}=\nu\}
\end{align}
is called the \emph{couplings} or \emph{transport plans} between $\mu$ and $\nu$. A measure $\gamma \in \Gamma(\mu,\nu)$ can be interpreted as a rearrangement of the mass of $\mu$ into $\nu$ where $\gamma(x,y)$ intuitively describes how much mass is taken from $x$ to $y$.
The total cost associated to a coupling $\gamma$ is given by
\begin{align}
	\label{eq:OTCost}
	\int_{\Omega \times \Omega} \c(x,y)\,\d\gamma(x,y)
\end{align}
where $\c : \Omega \times \Omega \to [0,\infty]$ and $\c(x,y)$ specifies the cost of moving one unit of mass from $x$ to $y$.
The \emph{optimal transport problem} asks for finding a $\gamma$ that minimizes \eqref{eq:OTCost} among all couplings $\Gamma(\mu,\nu)$,
\begin{align}
	\label{eq:OTProblem}
	\WOT(\mu,\nu) = \inf \left\{ \int_{\Omega \times \Omega} \c\,\d\gamma\,\middle|\,\gamma \in \Gamma(\mu,\nu) \right\}\,.
\end{align}
Under suitable regularity assumptions on $c$, existence of minimizers follows from standard compactness and lower semi-continuity arguments.
\begin{theorem}[\protect{\cite[Thm.\,4.1]{Villani-OptimalTransport-09}}]
	\label{thm:OTProblemExistence}
	If $c : \Omega \times \Omega \to [0,\infty]$ is lower semi-continuous, then minimizers of \eqref{eq:OTProblem} exist.
	The minimal value may be $+\infty$.
\end{theorem}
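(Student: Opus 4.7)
The plan is to apply the direct method of the calculus of variations: show that the feasible set is non-empty and weak-$*$ compact, that the cost functional is weak-$*$ lower semi-continuous, and then extract a minimizer from any minimizing sequence. Non-emptiness is clear since $\mu \otimes \nu \in \Gamma(\mu,\nu)$ has marginals $\mu$ and $\nu$ by construction; in the degenerate case that every $\gamma \in \Gamma(\mu,\nu)$ has infinite cost, $\WOT(\mu,\nu) = +\infty$ and every coupling is trivially a minimizer, which is consistent with the statement. Otherwise I fix a minimizing sequence $(\gamma_n)_{n \in \N} \subset \Gamma(\mu,\nu)$ with $\int_{\Omega \times \Omega} c\,\d\gamma_n \to \WOT(\mu,\nu) < \infty$.

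Second, I would establish weak-$*$ compactness and closedness of $\Gamma(\mu,\nu)$. Since $\Omega = \overline{U}$ is compact, so is $\Omega \times \Omega$, whence $\prob(\Omega \times \Omega)$ is weak-$*$ sequentially compact by Prokhorov's theorem. I extract a subsequence (not relabeled) with $\gamma_n \rightharpoonup \gamma^\ast$ for some $\gamma^\ast \in \prob(\Omega \times \Omega)$. The marginal constraints pass to the limit: for any $\varphi \in \cont(\Omega)$, the function $\varphi \circ \pi_1$ is bounded and continuous on $\Omega \times \Omega$, so
\begin{equation*}
\int_{\Omega} \varphi\,\d(\pushforward{\pi_1}{\gamma^\ast}) = \int_{\Omega \times \Omega} \varphi \circ \pi_1\,\d\gamma^\ast = \lim_{n \to \infty} \int_{\Omega \times \Omega} \varphi \circ \pi_1\,\d\gamma_n = \int_{\Omega} \varphi\,\d\mu,
\end{equation*}
and analogously for $\pi_2$, showing $\gamma^\ast \in \Gamma(\mu,\nu)$.

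Third, I would prove that $\gamma \mapsto \int c\,\d\gamma$ is weak-$*$ lower semi-continuous. Since $c$ is non-negative and l.s.c. on the compact metric space $\Omega \times \Omega$, a standard approximation lemma writes it as the pointwise supremum of an increasing sequence $(c_k)_{k \in \N}$ of bounded continuous functions $c_k : \Omega \times \Omega \to [0,\infty)$ (for instance the $k$-Lipschitz Moreau--Yosida regularisations of $\min(c,k)$). For each $k$, the functional $\gamma \mapsto \int c_k\,\d\gamma$ is weak-$*$ continuous on $\prob(\Omega \times \Omega)$, and monotone convergence yields $\int c\,\d\gamma = \sup_k \int c_k\,\d\gamma$ for every $\gamma \in \prob(\Omega \times \Omega)$. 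As a supremum of weak-$*$ continuous functionals, $\gamma \mapsto \int c\,\d\gamma$ is therefore weak-$*$ l.s.c., so
\begin{equation*}
\int c\,\d\gamma^\ast \leq \liminf_{n \to \infty} \int c\,\d\gamma_n = \WOT(\mu,\nu),
\end{equation*}
proving that $\gamma^\ast$ attains the infimum.

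The only substantive technical step is the l.s.c. approximation of $c$ in paragraph three; the rest consists of soft Prokhorov compactness together with a standard marginal-stability argument. One could alternatively bypass the explicit construction by directly invoking the Portmanteau-type result that $\int f\,\d\mu \leq \liminf_n \int f\,\d\mu_n$ whenever $\mu_n \rightharpoonup \mu$ and $f$ is non-negative and l.s.c., which is the route taken in \cite{Villani-OptimalTransport-09}.
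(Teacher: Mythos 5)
Your proof is correct and follows exactly the standard argument behind the cited result \cite[Thm.\,4.1]{Villani-OptimalTransport-09}, which the paper invokes without reproving: Prokhorov compactness of $\Gamma(\mu,\nu)$ on the compact space $\Omega\times\Omega$, weak-$*$ closedness of the marginal constraints, and weak-$*$ lower semi-continuity of $\gamma\mapsto\int c\,\d\gamma$ obtained by writing the nonnegative l.s.c.\ cost as an increasing supremum of bounded continuous functions. There is nothing to add.
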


\subsection{Unbalanced transport}
The optimal transport problem \eqref{eq:OTProblem} only allows the comparison of measures $\mu$, $\nu$ with equal mass. Otherwise, the feasible set $\Gamma(\mu,\nu)$ is empty.
Therefore, so-called unbalanced transport problems have been studied, where mass may be created or annihilated during transport and thus measures of different total mass can be compared in a meaningful way. See \cref{sec:Motivation} for context and references.

Throughout this article we consider unbalanced optimal entropy-transport problems as studied in \cite{LieroMielkeSavare-HellingerKantorovich-2015a}.
The basic idea is to replace the hard marginal constraints $\pushforward{\pi_1}{\gamma}=\mu$, $\pushforward{\pi_2}{\gamma}=\nu$ in \eqref{eq:Coupling} with soft constraints where the deviation between the marginals of $\gamma$ and the measures $\mu$ and $\nu$ is penalized by a marginal discrepancy function.
This allows more flexibility for feasible $\gamma$.
We focus on a subset of the family of marginal discrepancies considered in \cite{LieroMielkeSavare-HellingerKantorovich-2015a}.
\begin{definition}[Marginal discrepancy]
	\label{def:MarginalDiscrepancy}
	Let $\Floc : [0,\infty) \to [0,\infty]$ be proper, convex, and lower semi-continuous with $\lim_{s \to \infty} \tfrac{\Floc(s)}{s}=\infty$.
	For a given measure $\mu \in \measp(\Omega)$, the function $\Floc$ induces a \emph{marginal discrepancy} $\Fint(\cdot|\mu) : \measp(\Omega) \to [0,\infty]$ via
	\begin{align}
		\Fint(\rho|\mu) = \begin{cases}
			\displaystyle \int_\Omega \Floc\big(\RadNik{\rho}{\mu}\big)\,\d \mu & \tn{if } \rho \ll \mu, \\
			+ \infty & \tn{otherwise.}
			\end{cases}
	\end{align}
	Note that the integrand is only defined $\mu$-almost everywhere.
	$\Fint$ is (sequentially) weakly-$\ast$ lower semi-continuous \cite[Thm.\,2.34]{AFP00}.

	We extend the domain of definition of $\Floc$ to $\R$ by setting $\Floc(s)=\infty$ for $s<0$. The Fenchel--Legendre conjugate of $\Floc$ is then the convex function $\Floc^\ast : \R \to (-\infty,+\infty]$ defined by
\[
\Floc^\ast(z) = \sup_{s \in \R} \left( z \cdot s - \Floc(s) \right) = \sup_{s \geq 0} \left( z \cdot s - \Floc(s) \right).
\]
\end{definition}

\begin{example}[Kullback--Leibler divergence]
	\label{exp:KL}
	The Kullback--Leibler divergence is an example of \cref{def:MarginalDiscrepancy} for the choice $\FlocKL : [0,\infty) \to [0,\infty)$,
\[
\FlocKL(s)= \begin{cases}
			s \log s -s + 1 & \tn{if } s>0, \\
			1 & \tn{if } s=0.
		\end{cases}
\]	
The Fenchel--Legendre conjugate is given by $\FlocKL^\ast(z)=e^z-1$.
\end{example}

\begin{definition}[Unbalanced optimal transport problem]
	\label{def:UnbalancedTransport}
	Let $\Floc$ be as in \cref{def:MarginalDiscrepancy} and let $\Fint$ be the induced marginal discrepancy.
	Let $\mu$, $\nu \in \measp(\Omega)$ and $\c : \Omega \times \Omega \to [0,\infty]$ be lower semi-continuous.
	The corresponding unbalanced transport cost $\E : \measp(\Omega \times \Omega) \to [0,\infty]$ is given by
	\begin{align}
		\label{eq:UnbalancedEnergy}
		\E(\gamma) = \int_{\Omega \times \Omega} \c\,\d\gamma + \Fint(\pushforward{\pi_1}{\gamma}|\mu) + \Fint(\pushforward{\pi_2}{\gamma}|\nu)
	\end{align}
	and induces the optimization problem
	\begin{align}
		\label{eq:UnbalancedProblem}
		\W(\mu,\nu) = \inf \left\{ \E(\gamma)\,\middle|\,\gamma \in \measp(\Omega \times \Omega)\right\}.
	\end{align}
\end{definition}

The interaction between the terms in \eqref{eq:UnbalancedEnergy} that penalise transport and mass change introduces an intrinsic length scale for transport that you do not see in classical balanced transport. This is discussed in Example \ref{ex:LengthScales}.

\begin{theorem}[\protect{\cite[Thm.\,3.3]{LieroMielkeSavare-HellingerKantorovich-2015a}}]
	\label{thm:UnbalancedProblemExistence}
	Minimizers of \eqref{eq:UnbalancedProblem} exist.
  The minimal value may be $+\infty$.
\end{theorem}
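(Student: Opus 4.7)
The plan is to apply the direct method of the calculus of variations. If $\inf \E = +\infty$ there is nothing to prove (any $\gamma$ is trivially a minimizer), so assume the infimum is finite and fix a minimizing sequence $\{\gamma_n\} \subset \measp(\Omega \times \Omega)$.

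First I would establish compactness via a uniform bound on the total mass $\gamma_n(\Omega \times \Omega)$. The key ingredient is the superlinearity $\lim_{s\to\infty}\Floc(s)/s=\infty$: by a De La Vallée Poussin-type argument, for every $\varepsilon>0$ there exists $C_\varepsilon\geq 0$ such that $s \leq \varepsilon \Floc(s) + C_\varepsilon$ for all $s\geq 0$. Applying this pointwise to the Radon--Nikodym derivative of $\pushforward{\pi_1}{\gamma_n}$ with respect to $\mu$ (absolute continuity is enforced by finiteness of $\Fint$) and integrating against $\mu$,
\[
\gamma_n(\Omega\times\Omega) = (\pushforward{\pi_1}{\gamma_n})(\Omega) \leq \varepsilon\, \Fint(\pushforward{\pi_1}{\gamma_n}\,|\,\mu) + C_\varepsilon\, \mu(\Omega).
\]
Combined with the uniform bound on $\E(\gamma_n)$, this yields $\sup_n \gamma_n(\Omega\times\Omega) < \infty$. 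Since $\Omega\times\Omega$ is compact, Banach--Alaoglu (or, equivalently, Prokhorov) produces a (not relabelled) subsequence with $\gamma_n \overset{\ast}{\rightharpoonup} \gamma$ for some $\gamma \in \measp(\Omega\times\Omega)$.

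Next I would verify that $\E$ is weakly-$\ast$ lower semi-continuous. The transport cost $\gamma \mapsto \int \c\,\d\gamma$ is lsc because $\c$ is nonnegative and lower semi-continuous (write $\c$ as the supremum of an increasing sequence of nonnegative continuous functions and invoke monotone convergence). For the two marginal terms, the projection pushforwards $\pi_{i\#}$ are weakly-$\ast$ continuous (test against $\varphi \circ \pi_i$ with $\varphi \in \cont(\Omega)$), so composing with the weak-$\ast$ lower semi-continuity of $\Fint(\cdot\,|\,\mu)$ and $\Fint(\cdot\,|\,\nu)$ noted in \cref{def:MarginalDiscrepancy} implies that both $\gamma \mapsto \Fint(\pushforward{\pi_i}{\gamma}\,|\,\cdot)$ are lsc. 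Summing the three lsc contributions gives $\E(\gamma) \leq \liminf_n \E(\gamma_n) = \inf \E$, so $\gamma$ is the desired minimizer.

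The main obstacle is the compactness step, which hinges crucially on the superlinearity hypothesis on $\Floc$: without it the densities of the marginals of a minimizing sequence could spread out in such a way that no total-variation bound is available and no weakly-$\ast$ accumulation point need exist. The other ingredients---Banach--Alaoglu on a compact space, weak-$\ast$ continuity of $\pi_{i\#}$, and the weak-$\ast$ lsc of $\Fint$ cited from \cite{AFP00}---are standard.
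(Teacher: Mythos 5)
Your proposal is correct. Note that the paper does not prove this statement at all: it is quoted verbatim from \cite[Thm.\,3.3]{LieroMielkeSavare-HellingerKantorovich-2015a}, so there is no internal proof to compare against. Your self-contained direct-method argument is essentially the one used in that reference: superlinearity of $\Floc$ yields equi-boundedness of the marginals (hence of $\gamma_n(\Omega\times\Omega)$), compactness of $\Omega\times\Omega$ gives weak-$\ast$ sequential compactness, and the three terms of $\E$ are each weakly-$\ast$ lower semi-continuous. One small point worth making explicit: the superlinearity of $\Floc$ is used not only for the mass bound but also, implicitly, for the lower semi-continuity of $\Fint$ -- the weak-$\ast$ limit $\pushforward{\pi_i}{\gamma}$ need not a priori be absolutely continuous with respect to the reference measure, and it is precisely because the recession function of a superlinear $\Floc$ is $+\infty$ that the convention $\Fint(\rho|\mu)=+\infty$ for $\rho\not\ll\mu$ coincides with the lsc envelope in \cite[Thm.\,2.34]{AFP00}, which you (and the paper, in \cref{def:MarginalDiscrepancy}) invoke. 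With that observation your argument is complete.
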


\begin{remark}
Observe that $\Fint(\rho|\mu)=\infty$ whenever $\rho \not \ll \mu$ and $\Fint(\rho|\nu) = \infty$ whenever $\rho \not \ll \nu$. This guarantees that $\pushforward{\pi_1}{\gamma} \ll \mu$  and $\pushforward{\pi_2}{\gamma} \ll \nu$ for all feasible $\gamma$, where \emph{feasible} means that $\mathcal{E}(\gamma) < \infty$.
Thus, when $\mu \ll \Lebesgue$ and $\nu$ is discrete, as in the semi-discrete setting (which will be discussed in the following section), then the first and second marginal of any feasible $\gamma$ will share these properties.
\end{remark}

\begin{remark}
For simplicity we assume that the same marginal discrepancy is applied to both marginals in \eqref{eq:UnbalancedEnergy}, but of course in some cases it may be more appropriate to consider two different discrepancies. All results in this article generalize to this case in a canonical way.
\end{remark}

In this article we focus on cost functions $\c$ that can be written as increasing functions of the distance between $x$ and $y$.
\begin{definition}[Radial cost]
	\label{def:RadialCost}
	A cost function $\c : \Omega \times \Omega \to [0,\infty]$ is called \emph{radial} if it can be written as $\c(x,y) = \cRad(d(x,y))$ for a strictly increasing function $\cRad : [0,\infty) \to [0,\infty]$, continuous on its domain with $\cRad(0)=0$.
\end{definition}

Note that the cost $c$ need not be \emph{twisted} (twistedness means that $y\mapsto\nabla_xc(x,y)$ is injective for all $x$, see \cite[Definition 1.16]{Santambrogio-OTAM}, which leads to some technical complications.
The following examples shall be used throughout for illustration.
They all feature a \emph{radial} transport cost $c$ in the sense of \cref{def:RadialCost}.
\begin{example}[Unbalanced transport models]\label{exm:models}\hfill
\begin{enumerate}[label=(\alph*),ref=\alph*]
	\item\label{item:ModelOT}
	\textbf{Standard Wasserstein-2 distance (W2).}
	Classical balanced optimal transport can be recovered as a special case of \cref{def:UnbalancedTransport} by choosing $\Fint(\rho|\mu)=0$ if $\rho=\mu$ and $\infty$ otherwise. This corresponds to
	\begin{align*}
	\Floc(s) = \iota_{\{1\}}(s) & = \begin{cases}
		0 & \tn{if } s = 1, \\
		\infty & \tn{otherwise,}
		\end{cases}
	&
	\Floc^\ast(z) & = z\,.
	\end{align*}
	Then $\E(\gamma)<\infty$ only if $\gamma \in \Gamma(\mu,\nu)$, and therefore \eqref{eq:UnbalancedProblem} reduces to \eqref{eq:OTProblem}.
	In particular, the Wasserstein-2 setting is obtained for $\c(x,y)=d(x,y)^2$, and the Wasserstein-2 distance is defined by $\W_2(\mu,\nu)=\sqrt{\W(\mu,\nu)}$.
	\item\label{item:ModelGHK}
	\textbf{Gaussian Hellinger--Kantorovich distance (GHK).} This distance is introduced in \cite[Thm.\,7.25]{LieroMielkeSavare-HellingerKantorovich-2015a} using
	\begin{align*}
	\Floc(s) & = \FlocKL(s) = \begin{cases} s\log s-s+1 & \tn{if } s >0, \\ 1 & \tn{if } s=0,\end{cases}
	&
	\Floc^\ast(z) & = e^z-1\,,
	&
	\c(x,y)& =d(x,y)^2\,.
	\end{align*}
	\item\label{item:ModelHK}
	\textbf{Hellinger--Kantorovich distance (HK).}
	This important instance of unbalanced transport was introduced in different formulations in \cite{KMV-OTFisherRao-2015,ChizatOTFR2015,LieroMielkeSavare-HellingerKantorovich-2015a} whose mutual relations are described in \cite{ChizatDynamicStatic2018}. In \cref{def:UnbalancedTransport} one chooses
	\begin{gather*}
	\Floc(s) = \FlocKL(s)\,,\quad
	\Floc^\ast(z) = e^z-1\,, \\
	\c(x,y)=\cHK(x,y) = \begin{cases} -2\log\big[\cos\big(d(x,y)\big)\big] & \tn{if } d(x,y) < \tfrac{\pi}{2}, \\ \infty & \tn{otherwise,} \end{cases}\quad
	\end{gather*}
	and the Hellinger--Kantorovich distance is defined by $\HK(\mu,\nu)=\sqrt{\W(\mu,\nu)}$.
	The distance $\HK$ is actually a geodesic distance on the space of non-negative measures over a metric base space. From $\cHK(x,y)=\infty$ for $d(x,y)\geq \tfrac{\pi}{2}$, we learn that mass is never transported further than $\tfrac{\pi}{2}$ in this setting.
	\item\label{item:ModelQuadratic}
	\textbf{Quadratic regularization (QR).}
	The Kullback--Leibler discrepancy $\FlocKL$ used in both previous examples has an infinite slope at $0$, which in \cref{def:UnbalancedTransport} leads to a strong incentive to achieve $\pushforward{\pi_1}{\gamma}\gg\mu$ and $\pushforward{\pi_2}{\gamma}\gg\nu$.
	The following mere quadratic discrepancy does not have this property,
	\begin{align*}
	\Floc(s) & = (s-1)^2\,, &
	\Floc^\ast(z) & = \begin{cases} \tfrac{z^2}{4}+z & \tn{if } z \geq -2, \\ -1 & \tn{otherwise,}\end{cases}
	&
	\c(x,y) & =d(x,y)^2\,.
	\end{align*}
\end{enumerate}
\end{example}

Unsurprisingly, the structure of the function $\Floc$ has a great influence on the behaviour of the unbalanced optimization problem \eqref{eq:UnbalancedProblem}. Often it is helpful to analyze corresponding dual problems where the conjugate function $\Floc^\ast$ appears. We gather some properties of $\Floc^\ast$, implied by the assumptions on $\Floc$ in \cref{def:MarginalDiscrepancy} and on some additional assumptions that we will occasionally make in this article.
\begin{lemma}[Properties of $\Floc^\ast$]
	\label{lem:ConjugateProperties}
	Let $\Floc$ satisfy the assumptions given in \cref{def:MarginalDiscrepancy}. Then
	\begin{enumerate}[label=(\roman*),ref=\roman*]
		\item{} $\Floc^\ast(z)>-\infty$ for $z\in\R$;
			\label{item:FlocAstNotNegInf}
		\item{} $\Floc^\ast$ is increasing;
			\label{item:FlocAstIncreasing}
		\item{} $\Floc^\ast(z)\leq 0$ for $z\leq 0$;
			\label{item:FlocAstNegNeg}
		\item{} $\Floc^\ast(z)<\infty$ for $z \in (0,\infty)$;
			\label{item:FlocAstNotInf}
		\item{} $\Floc^\ast$ is real-valued and continuous on $\R$;
			\label{item:FlocAstFinite}
		\item{} if $\Floc$ is strictly convex on its domain, then $\Floc^\ast$ is continuously differentiable on $\R$;
			\label{item:FlocAstDiffble}
		\item{} $\Floc^\ast(z)\ge-\Floc(0)$ for all $z \in \R$;
			\label{item:FlocAstBound}
		\item{} if $\inf\{x \geq 0 | \Floc(x)<\infty\}=0$ (which holds in particular when $\Floc(0)<\infty$), then
		\begin{align*}
		\lim_{z\to-\infty}\min\partial\Floc^\ast(z)=\lim_{z\to-\infty}\max\partial\Floc^\ast(z)=0.
		\end{align*}
			\label{item:FlocZeroFinite}
	\end{enumerate}
\end{lemma}
\begin{proof}
\textbf{(\ref{item:FlocAstNotNegInf})} Since $\Floc$ is proper, we can find $s \in (0,\infty)$ with $\Floc(s)<\infty$. Then for all $z \in \R$, $\Floc^\ast(z) = \sup_{x \geq 0} (z \cdot x -\Floc(x)) \geq z \cdot s -\Floc(s) > -\infty$.

\textbf{(\ref{item:FlocAstIncreasing})} Let $z_1 \leq z_2$. Then $\Floc^\ast(z_2) = \sup_{x \geq 0} (z_2 \cdot x - \Floc(x)) \geq \sup_{x \geq 0} (z_1 \cdot x - \Floc(x)) = \Floc^\ast(z_1)$.

\textbf{(\ref{item:FlocAstNegNeg})} Let $z \leq 0$. Since $F \geq 0$, then $\Floc^\ast(z) = \sup_{x \geq 0} (z \cdot x - \Floc(x)) \leq \sup_{x \geq 0} z \cdot x = 0$.

\textbf{(\ref{item:FlocAstNotInf})} Let $z \in (0,\infty)$. Since $\Floc \geq 0$, $\Floc^\ast(z)=\infty$ is only possible if any maximizing sequence $x_1,x_2,\ldots$ for $\Floc^\ast(z) = \sup_{x \geq 0} (z \cdot x - \Floc(x))$ diverges to $\infty$. However, $\lim_{n \to \infty} (z \cdot x_n - \Floc(x_n)) = \lim_{x \to \infty} x \big(z-\tfrac{\Floc(x)}{x}\big) = -\infty$ since $\lim_{s \to \infty} \tfrac{F(s)}{s} = \infty$. So $\Floc^\ast(z)<\infty$.

\textbf{(\ref{item:FlocAstFinite})} \eqref{item:FlocAstNotNegInf}, \eqref{item:FlocAstNotInf}, and \eqref{item:FlocAstNegNeg} imply $\dom(\Floc^\ast)=\R$. By convexity, $\Floc^\ast$ is therefore continuous.

\textbf{(\ref{item:FlocAstDiffble})} This is a special case of a classical result in convex analysis, which can be found, for instance, in \cite[Thm.~26.3]{Rockafellar1972Convex}.

\textbf{(\ref{item:FlocAstBound})} Let $z \in \R$. Then $\Floc^\ast(z) = \sup_{x \geq 0} (z \cdot x - \Floc(x)) \geq -\Floc(0)$.

\textbf{(\ref{item:FlocZeroFinite})} Let $z_1,z_2,\ldots$ and $u_1,u_2,\ldots$ be sequences with $z_n \to -\infty$ as $n \to \infty$ and $u_n \in \partial \Floc^\ast(z_n)$. By monotonicity of $\Floc^\ast$, \eqref{item:FlocAstIncreasing}, we have $u_n \geq 0$. By \eqref{item:FlocAstNegNeg} and convexity one finds for any $a \geq 0$ with $\Floc(a)<\infty$ that
	$0 \geq \Floc^\ast(0) \geq \Floc^\ast(z_n) + u_n \cdot (0-z_n) \geq a \cdot z_n -\Floc(a) + u_n \cdot |z_n|$,
	which implies that $u_n \leq \Floc(a)/|z_n|+a$. This implies that $\limsup_n u_n \leq a$. Sending now $a \to 0$ yields the claim.
\end{proof}

\begin{remark}[Feasibility for finite $\Floc(0)$]
Note that for $\Floc(0)<\infty$ the trivial transport plan $\gamma=0$ leads to a finite cost in \eqref{eq:UnbalancedEnergy} so that $\W(\mu,\nu)<\infty$ for all $\mu,\nu\in\measp(\Omega)$.
\end{remark}

\subsection{Semi-discrete transport}
\label{sec:BackgroundSemiDisc}

An important special case of the classical balanced optimal transport problem \eqref{eq:OTProblem} is the case where $\mu$ is absolutely continuous with respect to the Lebesgue measure,
\begin{subequations}
\label{eqn:conditionMarginals}
\begin{equation}\label{eqn:conditionMu}
\mu \ll \Lebesgue\,,
\end{equation}
and $\nu$ is a discrete measure,
\begin{equation}\label{eqn:conditionNu}
\nu = \sum_{i=1}^M m_i \delta_{x_i}\,,
\end{equation}
\end{subequations}
with $m_i >0$, $x_i \in\Omega$, and $x_i \ne x_j$ for $i \ne j$.
See \cref{sec:Motivation} for context and references.
In this section we review the special structure of problem \eqref{eq:OTProblem} that follows from \eqref{eqn:conditionMarginals}.
For instance, optimal couplings for \eqref{eq:OTProblem} turn out to have a very particular form: the domain $\Omega$ is partitioned into cells, one cell for each discrete point $x_i$, and mass will only be transported from each cell to its corresponding discrete point.
The shape of the cells is determined by $\mu$, $\nu$ and the cost function $\c$ and can be expressed with the aid of \cref{def:Cells}. Problem \eqref{eq:OTProblem} can be rewritten explicitly as an optimization problem in terms of the cells. This tessellation formulation is given in \cref{thm:OTSemiDiscrete}, and its optimality conditions are described in \cref{thm:OTSemiDiscreteOptimality}.

\begin{definition}[Generalized Laguerre cells]
Given a transportation cost $\c$ and points $x_1,\ldots,x_M \in \Omega$, we define the \emph{generalized Laguerre cells} corresponding to the weight vector $w\in\R^M$ by
\label{def:Cells}
\begin{align}
	\C_i(w) = \left\{ x \in \Omega\,\middle|\,\c(x,x_i) < \infty, \, \c(x,x_i) - w_i \leq \c(x,x_j) - w_j \tn{ for all } j\in\{1,\ldots,M\} \right\}
\end{align}
for $i \in \{1,\ldots, M \}$.
The residual of $\Omega$, the set not covered by any of the cells $\C_i$, is defined by
\begin{align}
	\label{eq:Resid}
	\resid = \left\{x \in \Omega\,\middle|\,c(x,x_i)=\infty \tn{ for all } i \in\{1,\ldots,M\} \right\}.
\end{align}
Note that $\resid$ can also be written as $\resid = \Omega \setminus \big(\bigcup_{i=1}^M C_i(w)\big)$, which does not depend on $w \in \R^M$.
Note also that, if $a=\lambda(1,1,\ldots,1) \in \R^M$ is a vector with all components equal, then $C_i(w+a)=C_i(w)$ for all $i \in \{ 1, \ldots, M \}$.
\end{definition}

\begin{example}[Generalized Laguerre cells \cite{AurenhammerKleinLee}] \hfill
\begin{enumerate}[label=(\alph*),ref=\alph*]
\item\textbf{Voronoi diagrams.} If $c$ is radial (see \cref{def:RadialCost}) and finite, then the collection of generalized Laguerre cells with weight vector $0 \in \R^M$, $\{ C_i(0) \}_{i=1}^M$, is just the Voronoi diagram generated by the points $x_1,\ldots,x_M$. The residual set $\resid=\emptyset$.

\item\textbf{Laguerre diagrams or power diagrams.} If $c(x,y)=|x-y|^2$, then the collection of generalized Laguerre cells $\{ C_i(w) \}_{i=1}^M$ is known as the \emph{Laguerre diagram} or \emph{power diagram} generated by the weighted points $(x_1,w_1), \ldots, (x_M,w_M)$. The cells $C_i$ are the intersection of convex polytopes with $\Omega$. The residual set $\resid=\emptyset$.

\item\textbf{Apollonius diagrams.} If $c(x,y)=|x-y|$, then the collection of generalized Laguerre cells $\{ C_i(w) \}_{i=1}^M$ is known as the \emph{Apollonius diagram} generated by the weighted points $(x_1,w_1), \ldots, (x_M,w_M)$. The cells $C_i$ are the intersection of star-shaped sets with $\Omega$, and in two dimensions the boundaries between cells are arcs of hyperbolas. Again, $\resid=\emptyset$.
\end{enumerate}
\end{example}

\begin{theorem}[Dual tessellation formulation for semi-discrete transport]
	\label{thm:OTSemiDiscrete}
	Assume that $\mu$ and $\nu$ satisfy \eqref{eqn:conditionMarginals} and $\mu(\Omega)=\nu(\Omega)$.
	Let the cost function $c$ be radial (see \cref{def:RadialCost}) and $\WOT(\mu,\nu)<\infty$.
	Then
	\begin{align}
		\label{eq:SemiDiscreteUnconstrainedDual}
		\WOT(\mu,\nu) = \sup \left\{
			\sum_{i=1}^M \int_{\C_i(w)} \c(x,x_i)\,\d\mu(x)
			+ \sum_{i=1}^M \big(m_i - \mu(\C_i(w))\big) \cdot w_i
			\,\middle|\, w \in \R^M \right\}.
	\end{align}
\end{theorem}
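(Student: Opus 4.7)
The plan is to derive the identity from the standard Kantorovich dual of \eqref{eq:OTProblem} by specializing in two ways: discreteness of $\nu$ reduces the second potential to an $M$-vector $w$, and radiality of $\c$ turns the resulting $c$-transform into a minimum over $M$ terms whose level sets are exactly the generalized Laguerre cells of \cref{def:Cells}.

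Concretely, I would start from the Kantorovich dual
$$ \WOT(\mu,\nu) = \sup\left\{ \int_\Omega \phi\, \d\mu + \int_\Omega \psi\, \d\nu \,\middle|\, \phi(x)+\psi(y)\le \c(x,y) \text{ for all } x,y \right\}. $$
Because $\nu = \sum_i m_i \delta_{x_i}$, only the values $w_i := \psi(x_i)$ matter, giving $\int\psi\,\d\nu = \sum_{i=1}^M m_i w_i$. For a fixed $w\in\R^M$, the pointwise best $\phi$ compatible with the constraint is the $c$-transform
$$ \phi_w(x) := \min_{i\in\{1,\ldots,M\}}\bigl(\c(x,x_i) - w_i\bigr), $$
understood as $-\infty$ on the residual $\resid$. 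Since $\WOT(\mu,\nu)<\infty$, there is a transport plan with finite cost, and any such plan must place no mass on $\resid$, forcing $\mu(\resid)=0$; hence $\phi_w$ is $\mu$-a.e.\ finite.

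By \cref{def:Cells}, the set where this minimum is attained at index $i$ is exactly $\C_i(w)$, and two distinct cells overlap only on bisector sets of the form $\{x: \c(x,x_i)-w_i=\c(x,x_j)-w_j\}$. Because $\c$ is radial with strictly monotone $\cRad$ and the $x_i$ are distinct, these bisectors are contained in level sets of a nonconstant Lipschitz function of $x$ and hence have zero Lebesgue measure, and thus zero $\mu$-measure by \eqref{eqn:conditionMu}. Splitting the $\mu$-integral accordingly yields
$$ \int_\Omega \phi_w\,\d\mu = \sum_{i=1}^M \int_{\C_i(w)} \c(x,x_i)\,\d\mu(x) - \sum_{i=1}^M w_i\,\mu(\C_i(w)), $$
and adding $\sum_i m_i w_i$ produces exactly the right-hand side of \eqref{eq:SemiDiscreteUnconstrainedDual}. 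The supremum over $w\in\R^M$ followed by the observation that $\phi_w$ is pointwise maximal among feasible $\phi$'s for fixed $\psi$ shows that restricting to pairs $(\phi_w,\psi)$ loses nothing, giving the equality.

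The main technical obstacle is a clean invocation of Kantorovich duality when $\c$ may take the value $+\infty$ (as in the WFR model from \cref{exm:models}\eqref{item:ModelWFR}): the dual formula must be stated in a form that accommodates possibly unbounded or discontinuous potentials. This is exactly where the hypothesis $\WOT(\mu,\nu)<\infty$ together with $\mu(\resid)=0$ plays its role, because it guarantees that any reasonable maximizing sequence of $w$'s leads to $\phi_w$ that is $\mu$-a.e.\ finite and $\mu$-integrable, so the standard duality machinery applies on the complement of the residual and the finite-dimensional expression above is well-defined.
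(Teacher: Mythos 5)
Your proposal follows essentially the same route as the paper: Kantorovich duality, reduction of $\psi$ to a vector $w\in\R^M$ via discreteness of $\nu$, pointwise maximization over $\phi$ giving the $c$-transform $\phi_w$, and a decomposition of $\int_\Omega\phi_w\,\d\mu$ over the generalized Laguerre cells. Two points need repair. First, on the residual set the constraint $\phi(x)+w_i\le \c(x,x_i)$ is vacuous, so the pointwise-optimal $\phi_w$ equals $+\infty$ there, not $-\infty$; this is harmless only because you separately establish $\mu(\resid)=0$ (your primal argument for this, via a finite-cost plan, is fine and is a small variation on the paper's dual argument that $\phi_w\in L^1(\mu)$ forces $\mu(\resid)=0$).

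Second, and more seriously, your justification that the bisectors $\{x:\c(x,x_i)-w_i=\c(x,x_j)-w_j\}$ are Lebesgue-null --- namely that level sets of a nonconstant Lipschitz function have zero Lebesgue measure --- rests on a false general principle: $x\mapsto\max\{|x|-1,0\}$ is nonconstant and Lipschitz, yet its zero level set is the closed unit ball. Moreover $\cRad$ is only assumed continuous and strictly increasing, not Lipschitz. The correct argument (the paper's \cref{lem:CellNullOverlap}) uses the strict monotonicity of $\cRad$ together with the coarea formula: slicing the bisector by the level sets of $x\mapsto d(x,x_i)$, each slice is contained in the intersection of two non-concentric $(d-1)$-dimensional spheres and is therefore $\hd^{d-1}$-negligible, so the bisector has zero Lebesgue and hence zero $\mu$-measure. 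With that lemma supplied, the rest of your argument goes through.
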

\begin{remark}[Existence of optimal weights]\label{rem:existenceSemiDiscreteDual}
	Maximizers for \eqref{eq:SemiDiscreteUnconstrainedDual} do not always exist, even when $\WOT(\mu,\nu)<\infty$. A simple sufficient condition for existence is that $\c$ is bounded from above on $\Omega \times \Omega$. More details can be found, for instance, in \cite[Thm.~5.10]{Villani-OptimalTransport-09}.
\end{remark}

\begin{theorem}[Optimality conditions]
	\label{thm:OTSemiDiscreteOptimality}
	Under the conditions of \cref{thm:OTSemiDiscrete},
	a coupling $\gamma \in \Gamma(\mu,\nu)$ and a vector $w \in \R^M$ are optimal for $\WOT(\mu,\nu)$ in \eqref{eq:OTProblem} and \eqref{eq:SemiDiscreteUnconstrainedDual} respectively, if and only if
	\begin{align}
		\label{eq:SemiDiscreteOptimalGamma}
		\gamma & = \sum_{i=1}^M \left( \mu \restr{\C_i(w)} \otimes \delta_{x_i} \right), &
		\mu(\C_i(w)) & = m_i \tn{ for } i \in \{1,\ldots,M\}.
	\end{align}
\end{theorem}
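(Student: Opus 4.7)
The plan is to derive the optimality conditions from a weak-duality gap formula that simultaneously yields strong duality (cf.~\cref{thm:OTSemiDiscrete}) and pinpoints when the gap closes. Fix any $w\in\R^M$ and define the $c$-transform $\phi^c(x)=\min_{i\in\{1,\dots,M\}}(\c(x,x_i)-w_i)$. By definition of the generalized Laguerre cells, $\phi^c(x)=\c(x,x_i)-w_i$ exactly for $x\in\C_i(w)$, and the dual objective \eqref{eq:SemiDiscreteUnconstrainedDual} evaluated at $w$ rewrites as $\int_\Omega\phi^c\,\d\mu+\sum_{i=1}^M w_i m_i$.

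The key identity I would exploit is that, for any $\gamma\in\Gamma(\mu,\nu)$, the marginal constraints $\pushforward{\pi_1}{\gamma}=\mu$ and $\pushforward{\pi_2}{\gamma}=\nu$ give
\[
\int_{\Omega\times\Omega} \c\,\d\gamma \;=\; \int_\Omega \phi^c\,\d\mu + \sum_{i=1}^M w_i m_i + \int_{\Omega\times\Omega}\bigl(\c(x,y)-\phi^c(x)-\bar w(y)\bigr)\,\d\gamma(x,y),
\]
where $\bar w(x_i)=w_i$ is defined on $\spt\nu$. On $\spt\gamma\subset\Omega\times\{x_1,\dots,x_M\}$ the last integrand equals $\c(x,x_i)-\phi^c(x)-w_i\ge 0$, with equality precisely for $x\in\C_i(w)$. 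This simultaneously gives weak duality and shows that $\int\c\,\d\gamma$ equals the dual value at $w$ if and only if $\gamma$ is concentrated on $\bigcup_{i=1}^M(\C_i(w)\times\{x_i\})$.

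Combining this concentration property with the marginal constraints forces $\gamma=\sum_{i=1}^M(\mu\restr A_i)\otimes\delta_{x_i}$ for measurable sets $A_i\subset\C_i(w)$ with $\bigcup_i A_i=\Omega$ modulo $\mu$ (from the first marginal) and $\mu(A_i)=m_i$ (from the second). To upgrade $A_i\subset\C_i(w)$ to $A_i=\C_i(w)$ modulo $\mu$, I would observe that for $i\neq j$ the overlap $\C_i(w)\cap\C_j(w)$ is contained in the tie set $\{x:\cRad(d(x,x_i))-\cRad(d(x,x_j))=w_i-w_j\}$. Since $\cRad$ is strictly increasing and continuous and $x_i\neq x_j$, the Lipschitz function $\cRad(d(\cdot,x_i))-\cRad(d(\cdot,x_j))$ is non-constant, so by a co-area argument this level set is Lebesgue-negligible, hence $\mu$-negligible since $\mu\ll\Lebesgue$. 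Thus the $\C_i(w)$ are $\mu$-essentially disjoint and $A_i=\C_i(w)$ modulo $\mu$. The converse direction is immediate: if $\gamma$ has the stated form and $\mu(\C_i(w))=m_i$, then the nonnegative integrand in the identity vanishes $\gamma$-a.e., so $\int\c\,\d\gamma$ equals the dual objective at $w$, and both are optimal by \cref{thm:OTSemiDiscrete}.

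The main obstacle I anticipate is the $\mu$-essential disjointness of the cells under the general radial-cost hypothesis of \cref{def:RadialCost}. For squared-distance cost the tie sets are classical hyperplanes, but with $\cRad$ only strictly increasing and continuous, ruling out pathological positive-measure level sets of $\cRad(d(\cdot,x_i))-\cRad(d(\cdot,x_j))$ requires a careful argument exploiting the Lipschitz regularity of Euclidean distance together with $\mu\ll\Lebesgue$. This is the only place where the absolute continuity hypothesis on $\mu$ is genuinely used.
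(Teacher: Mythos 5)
Your proposal is correct and follows essentially the same route as the paper: write the primal cost minus the dual objective as a single nonnegative integral of $\c(x,x_i)-w_i-\phi_w(x)$ against $\gamma$, conclude that the gap closes iff $\gamma$ is concentrated on $\bigcup_i\C_i(w)\times\{x_i\}$, and then use the marginal constraints together with $\mu$-essential disjointness of the cells to pin down $\gamma_i=\mu\restr\C_i(w)$ and $\mu(\C_i(w))=m_i$. One caveat on the obstacle you flag: your heuristic that a non-constant Lipschitz function has Lebesgue-negligible level sets is false in general (the gradient may vanish on a set of positive measure), and moreover $\cRad\circ d(\cdot,x_i)-\cRad\circ d(\cdot,x_j)$ need not be Lipschitz when $\cRad$ is merely continuous and strictly increasing; the paper's \cref{lem:CellNullOverlap} instead applies the coarea formula to $f=d(\cdot,x_i)$ and observes that on each sphere $f^{-1}(t)$ the tie condition forces $d(x,x_j)$ to a single value by strict monotonicity of $\cRad$, so the tie set meets each sphere in the intersection of two non-concentric spheres and is therefore negligible.
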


Proofs of \cref{thm:OTSemiDiscrete} and \cref{thm:OTSemiDiscreteOptimality} can be found below and for example in \cite{KitagawaMerigotThibert} and \cite[Section 4]{MerigotThibertOT} for twisted costs $c$. 
We provide proofs of \cref{thm:OTSemiDiscrete,thm:OTSemiDiscreteOptimality} for two reasons.
 They serve as preparation for the proof of \cref{thm:UnbalancedTessellationDual,thm:UnbalancedOptimalityConditions} in the case of semi-discrete \emph{unbalanced} transport, which generalize \cref{thm:OTSemiDiscrete,thm:OTSemiDiscreteOptimality}.
In addition, they deal with the technical aspect that our cost function $c$ is not necessarily twisted and may take the value $+\infty$ at finite distances. In particular, $c$ does not satisfy the assumptions in \cite{KitagawaMerigotThibert,MerigotThibertOT}. We rely on the following lemma, which essentially provides the existence of a Monge map in the semi-discrete setting (\cref{cor:SemiDiscreteMongeMap}). For twisted costs this result can be found in \cite[Proposition 37]{MerigotThibertOT}.

\begin{lemma}[Laguerre cell boundaries]
	\label{lem:CellNullOverlap}
	Let the cost function $\c$ be radial in the sense of \cref{def:RadialCost} and let $\{x_i\}_{i=1}^M$ be $M$ \emph{distinct} points in $\Omega$. The induced generalized Laguerre cells satisfy $|\C_i(w) \cap \C_j(w)| = 0$ for $i \neq j$ and any $w\in\R^M$.
\end{lemma}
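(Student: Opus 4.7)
The plan is to pass from the defining inequality of the Laguerre cells to an equality-set governed by strict monotonicity of $\cRad$, and then bound that set by a coarea slicing through spheres centred at $x_i$. First, if $x \in \C_i(w) \cap \C_j(w)$ then applying the defining inequality of each cell in both directions gives $\c(x,x_i) - w_i = \c(x,x_j) - w_j$, with both values finite (by \cref{def:Cells}); by \cref{def:RadialCost} this reads $\cRad(d(x,x_i)) - \cRad(d(x,x_j)) = w_i - w_j$. Hence it suffices to show that the set
$$S := \left\{x \in \R^d : \cRad(d(x,x_i)), \cRad(d(x,x_j)) < \infty,\ \cRad(d(x,x_i)) - \cRad(d(x,x_j)) = w_i - w_j\right\}$$
has zero Lebesgue measure.

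Because $\cRad$ is strictly increasing and continuous on its finite-valued domain, it has a continuous inverse $\cRad^{-1}$ on its image, so on $S$ the distance $d(x,x_j)$ is uniquely determined by $d(x,x_i)$ via $d(x,x_j) = \cRad^{-1}\bigl(\cRad(d(x,x_i)) - (w_i - w_j)\bigr)$. I would then apply the coarea formula to the $1$-Lipschitz function $\Phi(x) = d(x,x_i)$, whose gradient has modulus $1$ almost everywhere, to write
$$|S| \;=\; \int_0^\infty \hd^{d-1}\bigl(S \cap \Phi^{-1}(r)\bigr)\, \d r.$$
For each $r \ge 0$, the slice $S \cap \Phi^{-1}(r)$ is contained in the intersection of two spheres $\{d(\cdot,x_i)=r\} \cap \{d(\cdot,x_j) = \cRad^{-1}(\cRad(r)-(w_i-w_j))\}$. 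Since $x_i \ne x_j$ these two spheres are distinct, and elementary geometry (both equations pin $x$ down to a sphere inside their radical hyperplane) shows this intersection is empty, a single point, or a $(d-2)$-dimensional sphere. In every case its $\hd^{d-1}$-measure is zero, so the integrand vanishes and $|S|=0$.

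The delicate point is that $\cRad$ is assumed only continuous and strictly monotonic, not differentiable, so one cannot just argue via a Sard-type regular-value result applied to the map $x \mapsto \cRad(d(x,x_i)) - \cRad(d(x,x_j))$. The observation that rescues the argument is that strict monotonicity alone already collapses the natural two-parameter condition on $(d(x,x_i),d(x,x_j)) \in [0,\infty)^2$ down to a one-parameter curve, after which the codimension-two structure of two-sphere intersections is enough to conclude.
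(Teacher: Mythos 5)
Your proof is correct and follows essentially the same route as the paper's: reduce the overlap to the equality set $\c(x,x_i)-w_i=\c(x,x_j)-w_j$, use strict monotonicity of $\cRad$ to pin $d(x,x_j)$ as a function of $d(x,x_i)$, slice with the coarea formula for $d(\cdot,x_i)$, and observe that each slice lies in the intersection of two non-concentric spheres, hence is $\hd^{d-1}$-negligible. The only cosmetic difference is that the paper first truncates to the sets $A_n=\{\c(x,x_i)\le n\}$ to keep the range of integration bounded, whereas you integrate over $[0,\infty)$ directly; both are fine.
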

\begin{proof}
	Fix $i \neq j$ and $w \in \R^M$ and recall that $\c(x,y)=\cRad(d(x,y))$.
	We have
	\begin{equation*}
	\C_i(w)\cap\C_j(w)=\bigcup_{n\in\N}A_n
	\quad\text{for }
	A_n=\{x\in\Omega\,|\,\c(x,x_i)-w_i=\c(x,x_j)-w_j,\,\c(x,x_i)\leq n\}\,,
	\end{equation*}
	and we will show that the $d$-dimensional Hausdorff measure of each $A_n$ is zero, $\hd^d(A_n)=0$, which implies $|A_n|=0$ and thus also $|\C_i(w) \cap \C_j(w)| = 0$.
	Indeed, abbreviating $f=d(\cdot,x_i)$ and noting that the Jacobian of $f$ equals $1$ almost everywhere, the coarea formula \cite[Thm.\,2.93]{AFP00} yields
	\begin{equation*}
	\hd^d(A_n)
	=\int_{A_n}1\,\d\hd^d
  =\int_\R\hd^{d-1}(A_n\cap f^{-1}(t))\,\d t
  =\int_0^{\cRad^{-1}(n)}\hd^{d-1}(A_n\cap f^{-1}(t))\,\d t\,.
	\end{equation*}
	Now, for $t \in [0,\cRad^{-1}(n)]$,
\[
A_n\cap f^{-1}(t)=\{x\in\Omega\,|\,d(x,x_i)=t\text{ and }d(x,x_j)\in\cRad^{-1}(\cRad(d(x,x_i))+w_j-w_i)\},
\]
	where $\cRad^{-1}(\cRad(d(x,x_i))+w_j-w_i)$ is either empty or single-valued due to the strict monotonicity of $\cRad$.
	Hence, $A_n\cap f^{-1}(t)$ is contained in the intersection of two non-concentric $(d-1)$-dimensional spheres and thus is $\hd^{d-1}$-negligible.
\end{proof}

\begin{proof}[Proof of \cref{thm:OTSemiDiscrete}]
	By Kantorovich duality \cite[Thm.\,5.10]{Villani-OptimalTransport-09} we can write
	\begin{multline}
		\label{eq:OTDuality}
		\WOT(\mu,\nu) = \sup \left\{
			\int_\Omega \phi\,\d \mu + \int_\Omega \psi\,\d \nu \,\middle|\,
				\phi \in L^1(\mu),\, \psi \in L^1(\nu), \right. \\
			 \left. \vphantom{\int_\Omega} \phi(x) + \psi(y) \leq c(x,y) \,\forall\,(x,y) \in \Omega\times\Omega \right\}\,.
	\end{multline}
	Since $\nu$ is discrete, $L^1(\nu)$ is isomorphic to $\R^M$ under the isomorphism $I:L^1(\nu)\to\R^M$, $\psi\mapsto(\psi(x_1),\ldots,\psi(x_M))$. The above dual problem thus becomes
  \begin{multline*}
    \WOT(\mu,\nu) = \sup \left\{
      \int_\Omega \phi\,\d \mu + \sum_{i=1}^M w_i\,m_i \,\middle|\,
        \phi \in L^1(\mu),\, w\in\R^M, \right. \\
       \left. \vphantom{\int_\Omega} \phi(x) + w_i \leq c(x,x_i) \,\forall\,x\in\Omega,i\in\{1,\ldots,M\}\right\}\,.
  \end{multline*}
	Next, for fixed $w$, one can explicitly maximize over $\phi$, which corresponds to pointwise maximization subject to the constraint. We denote the maximizer by $\phi_{w}$ to emphasize the dependency on $w$,
	\begin{align}
		\label{eq:PhiW}
		\phi_{w}(x) = \min\big\{ c(x,x_i)-w_i\,|\,i =1,\ldots,M \big\}\,.
	\end{align}
	Since $\infty>\WOT(\mu,\nu)-\sum_{i=1}^Mw_i\,m_i\geq\int_\Omega\phi_w\,\d\mu$ and $\phi_w$ is bounded from below (as $c$ is so in our setting) one must have $\phi_{w} \in L^1(\mu)$ for all $w \in \R^M$, and we find
	\begin{align}
		\label{eq:ESDPre}
		\WOT(\mu,\nu) = \sup\{ \E_{\tn{SD}}(w)\,|\,w \in \R^M\}
		\quad \tn{with} \quad
		\E_{\tn{SD}}(w) = \int_\Omega \phi_{w}(x)\,\d \mu(x) + \sum_{i=1}^M w_i\,m_i\,.
	\end{align}
	Since $\phi_w\in L^1(\mu)$ for any $w\in\R^M$, the residual set $\resid$ must be $\mu$-negligible; likewise, the intersection of generalized Laguerre cells is $\mu$-negligible by \cref{lem:CellNullOverlap}.
	Consequently,
	\begin{equation*}
	\E_{\tn{SD}}(w)
  = \sum_{i=1}^M\int_{\C_i(w)} \phi_{w}(x)\,\d \mu(x) + \sum_{i=1}^M w_i\,m_i
  = \sum_{i=1}^M\int_{\C_i(w)} [\c(x,x_i)-w_i]\,\d \mu(x) + \sum_{i=1}^M w_i\,m_i\,,
	\end{equation*}
	which leads to the desired result.
\end{proof}

\begin{proof}[Proof of \cref{thm:OTSemiDiscreteOptimality}]
	The condition $\gamma \in \Gamma(\mu,\nu)$ implies that $\gamma$ can be written as $\gamma=\sum_{i=1}^M \gamma_i \otimes \delta_{x_i}$ where $\gamma_i \in \measp(\Omega)$, $\gamma_i(A) := \gamma(A \times \{ x_i\})$. Observe that $\sum_{i=1}^M \gamma_i = \mu$ and $\gamma_i(\Omega)=m_i$.
	We obtain
	\begin{align}
		\label{eq:SemiDiscOptPrimal}
		\WOT(\mu,\nu) \leq \int_{\Omega \times \Omega} c\,\d\gamma
		= \sum_{i=1}^M \int_\Omega c(x,x_i)\,\d\gamma_i(x)\,,
	\end{align}
	where the inequality is an equality if and only if $\gamma$ is optimal. Let $w \in \R^M$.
	From \eqref{eq:ESDPre} with $\phi_{w}$ given by \eqref{eq:PhiW} we find
	\begin{align}
		\label{eq:SemiDiscOptDual}
		\WOT(\mu,\nu) \geq \int_\Omega \phi_{w}(x)\,\d \mu(x) + \sum_{i=1}^M w_i\,m_i
		= \sum_{i=1}^M \int_\Omega \left[\phi_{w}(x)+w_i\right]\,\d\gamma_i(x)\,,
	\end{align}
	where the inequality is an equality if and only if $w$ is optimal.
	Subtracting \eqref{eq:SemiDiscOptDual} from \eqref{eq:SemiDiscOptPrimal} yields
	\begin{align}
		\label{eq:SemiDiscOptPrimalDual}
		0 \leq \sum_{i=1}^M \int_\Omega \left[c(x,x_i)-w_i-\phi_{w}(x)\right]\,\d\gamma_i(x).
	\end{align}
	 with equality if and only if $\gamma$ and $w$ are optimal.
	 By definition of $\phi_{w}$ the integrand in each term of the sum is nonnegative and strictly positive for $x \notin \C_i(w)$. Therefore \eqref{eq:SemiDiscOptPrimalDual} is an equality if and only if $\gamma_i$ is concentrated on $\C_i(w)$ for all $i \in \{1,\ldots,M\}$.
	 Combining absolute continuity with respect to the Lebesgue measure of $\mu$ and $\gamma_i$ and \cref{lem:CellNullOverlap} implies that the unique choice is $\gamma_i = \mu \restr{C_i(w)}$.
	 Due to the second marginal constraint this implies $\mu(C_i(w))=\gamma_i(\Omega)=m_i$.	
\end{proof}

The above results imply the existence of an optimal Monge map for the semi-discrete problem.
\begin{corollary}[Existence of Monge map]
	\label{cor:SemiDiscreteMongeMap}
	If a maximizer $w\in\R^M$ of \eqref{eq:SemiDiscreteUnconstrainedDual} exists (cf.~\cref{rem:existenceSemiDiscreteDual}), then the optimal coupling $\gamma$ in \eqref{eq:OTProblem} is induced by a transport map $T : \Omega \to \{x_i\}_{i=1}^M \subset \Omega$, $\gamma = \pushforward{(\Id \times T)}{\mu}$, defined by $T(x)=x_i$ when $x \in C_i(w)$. By virtue of \cref{lem:CellNullOverlap} and since $\mu \ll \Lebesgue$, $T$ is well-defined $\mu$-almost everywhere.
\end{corollary}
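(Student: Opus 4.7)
The plan is to assemble the corollary almost directly from \cref{thm:OTSemiDiscrete,thm:OTSemiDiscreteOptimality,lem:CellNullOverlap}: once the maximizer $w$ is in hand, the optimality conditions already pin down the structure of $\gamma$, and it only remains to re-express this structure as a pushforward by a Monge map.

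First I would apply \cref{thm:OTSemiDiscreteOptimality} to the assumed optimal pair $(\gamma,w)$, obtaining the representation $\gamma = \sum_{i=1}^M \mu\restr \C_i(w) \otimes \delta_{x_i}$ and the mass-balance $\mu(\C_i(w)) = m_i$. This is the crucial input and is essentially the corollary in measure-theoretic form; the remaining work is geometric/set-theoretic.

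Next I would verify that the map $T$ defined by $T(x) = x_i$ for $x \in \C_i(w)$ is well-defined $\mu$-almost everywhere. By \cref{lem:CellNullOverlap}, $|\C_i(w)\cap\C_j(w)|=0$ for $i\neq j$, and since $\mu\ll\Lebesgue$ this gives $\mu(\C_i(w)\cap\C_j(w))=0$, so the cells form a partition up to a $\mu$-null set. The residual set $\resid$ from \eqref{eq:Resid} is also $\mu$-negligible: either directly, since $\sum_i \mu(\C_i(w)) = \sum_i m_i = \mu(\Omega)$ (from the mass-balance combined with $\mu(\Omega)=\nu(\Omega)$), or by recalling from the proof of \cref{thm:OTSemiDiscrete} that $\phi_w\in L^1(\mu)$ forces $\mu(\resid)=0$. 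Hence $T$ is defined on a set of full $\mu$-measure.

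Finally, I would check that $\gamma = \pushforward{(\Id\times T)}{\mu}$ by testing against a continuous function $\varphi \in \cont(\Omega\times\Omega)$:
\[
\int_{\Omega\times\Omega} \varphi\,\d\pushforward{(\Id\times T)}{\mu} = \int_\Omega \varphi(x,T(x))\,\d\mu(x) = \sum_{i=1}^M \int_{\C_i(w)} \varphi(x,x_i)\,\d\mu(x) = \int_{\Omega\times\Omega}\varphi\,\d\gamma,
\]
where the second equality uses the $\mu$-a.e.\ partition established in the previous step and the third uses the representation from \cref{thm:OTSemiDiscreteOptimality}. Since this holds for all $\varphi$, the two measures coincide. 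There is no real obstacle here; the only substantive point is the $\mu$-a.e.\ well-definedness of $T$, which rests on \cref{lem:CellNullOverlap} together with $\mu\ll\Lebesgue$ — exactly the ingredients highlighted in the statement.
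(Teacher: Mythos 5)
Your proposal is correct and follows essentially the same route as the paper, which presents the corollary as an immediate consequence of \cref{thm:OTSemiDiscreteOptimality} (giving $\gamma=\sum_i\mu\restr\C_i(w)\otimes\delta_{x_i}$) together with \cref{lem:CellNullOverlap} and $\mu\ll\Lebesgue$ for the $\mu$-a.e.\ well-definedness of $T$. The details you add — the $\mu$-negligibility of $\resid$ and the verification of the pushforward identity against test functions — are exactly the steps the paper leaves implicit.
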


\begin{example}[Optimal tessellations for Wasserstein distances] Let $\mu$ and $\nu$ satisfy \eqref{eqn:conditionMarginals}.
\begin{enumerate}[label=(\alph*),ref=\alph*]
\item\textbf{Wasserstein-2 distance.}
Let $c(x,y)=|x-y|^2$. If $T$ is an optimal Monge map, then the optimal transport cells $T^{-1}(\{ x_i \})$ are the Laguerre cells (or power cells) $C_i(w)$ with weight vector $w = (\psi(x_1),\ldots,\psi(x_M))$, where $\psi:\Omega \to \R$ is an optimal Kantorovich potential for the dual transport problem \eqref{eq:OTDuality}.
\item\textbf{Wasserstein-1 distance.} Let $c(x,y)=|x-y|$. If $T$ is an optimal Monge map, then the optimal transport cells $T^{-1}(\{ x_i \})$ are the Apollonius cells $C_i(w)$ with weight vector $w = (\psi(x_1),\ldots,\psi(x_M))$, where $\psi$ is an optimal Kantorovich potential.
\end{enumerate}
\end{example}

\section{Semi-discrete unbalanced transport}\label{sec:semiDiscreteUnbalanced}
In this section we consider \emph{semi-discrete unbalanced transport}. That is, we study \eqref{eq:UnbalancedProblem} for the cases where $\mu$ is absolutely continuous with respect to the Lebesgue measure and $\nu$ is discrete, as stated in \eqref{eqn:conditionMarginals}, and we do not require that $\mu(\Omega) = \nu(\Omega)$.
Semi-discrete unbalanced transport models the situation where there is a mismatch between the capacity of a discrete resource $\nu$ and the demand of a population $\mu$.

\subsection{Tessellation formulation}\label{sec:Tessellation}

The main results of this Section are \cref{thm:UnbalancedTessellationDual,thm:UnbalancedOptimalityConditions}, which generalize \cref{thm:OTSemiDiscrete,thm:OTSemiDiscreteOptimality} to the unbalanced setting.
Furthermore, in \cref{cor:UnbalancedTessellationPrimal} we state a `primal' counterpart of \cref{thm:UnbalancedTessellationDual} which is somewhat pathological in the classical, balanced optimal transport setting, but quite natural in the unbalanced case.

The following result generalizes \cref{thm:OTSemiDiscrete} to unbalanced transport.
\begin{theorem}[Tessellation formulation for semi-discrete unbalanced transport]
	\label{thm:UnbalancedTessellationDual}
	Let the cost function $c$ be radial (see \cref{def:RadialCost}).
	Given $\mu,\nu\in\measp(\Omega)$ satisfying \eqref{eqn:conditionMarginals}, define $\G : \R^M \to (-\infty,\infty]$ by
	\begin{subequations}
	\label{eq:UnbalancedTessellationDual}
	\begin{gather}
		\label{eq:UnbalancedTessellationDualObjective}
		\G(w) = -\sum_{i=1}^M \left(
				\int_{\C_i(w)} \Floc^\ast\big(-c(x,x_i)+w_i\big)\,\d\mu(x)
				+\Floc^\ast(-w_i) \cdot m_i \right) + \Floc(0) \cdot \mu(\resid) \\
		\intertext{with the convention $\infty\cdot0=0$. Then the unbalanced optimal transport distance can be obtained via}
\label{eq:UnbalancedTessellationDualObjective_b}
		\W(\mu,\nu) = \sup\left\{ \G(w)\,\middle|\,w \in \R^M\right\}.
	\end{gather}
	This is a concave maximization problem.
	\end{subequations}
\end{theorem}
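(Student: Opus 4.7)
The plan is to follow the same strategy as in the proof of \cref{thm:OTSemiDiscrete}, replacing the Kantorovich duality for the classical problem by the corresponding duality for unbalanced entropy--transport from \cite{LieroMielkeSavare-HellingerKantorovich-2015a}, which in our notation reads
\[
\W(\mu,\nu)=\sup\left\{-\int_\Omega\Floc^\ast(-\phi)\,\d\mu-\int_\Omega\Floc^\ast(-\psi)\,\d\nu\ \middle|\ \phi,\psi\in\cont(\Omega),\ \phi(x)+\psi(y)\le c(x,y)\right\}.
\]
Since $\nu=\sum_{i=1}^M m_i\delta_{x_i}$, the potential $\psi$ enters only through its values at the $x_i$ and can be identified with a vector $w\in\R^M$ via $w_i\eqdef\psi(x_i)$, and the constraint becomes $\phi(x)\le c(x,x_i)-w_i$ for every $i\in\{1,\ldots,M\}$.

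Next I would perform the inner supremum over $\phi$ pointwise. Because $\Floc^\ast$ is increasing by \cref{lem:ConjugateProperties}\,\eqref{item:FlocAstIncreasing}, the map $t\mapsto -\Floc^\ast(-t)$ is increasing in $t$, so the optimal $\phi$ is the pointwise largest admissible function,
\[
\phi_w(x)\eqdef\min_{i=1,\ldots,M}\bigl[c(x,x_i)-w_i\bigr],
\]
exactly as in \eqref{eq:PhiW}. On the residual set $\resid$ of \eqref{eq:Resid} we have $\phi_w=+\infty$, and \cref{lem:ConjugateProperties}\,\eqref{item:FlocZeroFinite} yields $\lim_{z\to-\infty}\Floc^\ast(z)=-\Floc(0)$ whenever $\Floc(0)<\infty$, while for $\Floc(0)=\infty$ feasibility of the primal problem forces $\mu(\resid)=0$ under the convention $0\cdot\infty=0$. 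Either way the contribution of $\resid$ to $-\int\Floc^\ast(-\phi_w)\,\d\mu$ equals $\Floc(0)\,\mu(\resid)$. On $\Omega\setminus\resid$ the generalized Laguerre cells cover the domain, and by \cref{lem:CellNullOverlap} together with $\mu\ll\Lebesgue$ their pairwise overlaps are $\mu$-negligible; inside $\C_i(w)$ the minimum in the definition of $\phi_w$ is attained at index $i$, so that $\phi_w(x)=c(x,x_i)-w_i$. Splitting the $\mu$-integral accordingly and combining with $-\sum_i\Floc^\ast(-w_i)\,m_i$ reproduces $\G(w)$ exactly, proving the supremum formula \eqref{eq:UnbalancedTessellationDualObjective_b}.

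For concavity of $\G$, I observe that for each fixed $x$ the map $w\mapsto\phi_w(x)$ is concave as a minimum of affine functions, while $t\mapsto -\Floc^\ast(-t)$ is concave and nondecreasing (convexity of $\Floc^\ast$ together with the affine inner variable $-t$ and an overall sign change, plus monotonicity from \cref{lem:ConjugateProperties}\,\eqref{item:FlocAstIncreasing}). Since the composition of a concave function with an increasing concave function is concave, $w\mapsto-\Floc^\ast(-\phi_w(x))$ is concave for each $x$, and $\mu$-integration preserves concavity. The separate terms $-\Floc^\ast(-w_i)m_i$ are concave in $w_i$ for the same reason, and $\Floc(0)\mu(\resid)$ is constant in $w$. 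The principal obstacle will be the rigorous handling of the residual set: justifying that assigning $\phi_w=+\infty$ on $\resid$ is admissible in the duality (measurability, integrability of $\Floc^\ast(-\phi_w)$, and the proper interpretation of $\Floc^\ast(-\infty)$), which I would handle either by an approximation argument with truncated continuous potentials or by appealing directly to the relaxed form of the duality in \cite{LieroMielkeSavare-HellingerKantorovich-2015a} that admits extended real-valued potentials.
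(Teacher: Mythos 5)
Your proposal is correct and follows essentially the same route as the paper: the Liero--Mielke--Savar\'e duality, identification of $\psi$ with $w\in\R^M$, pointwise optimization yielding $\phi_w$, the cell/residual decomposition via \cref{lem:CellNullOverlap}, and the same composition argument for concavity. The one step you flag as the ``principal obstacle'' — admissibility of $\phi_w=+\infty$ on $\resid$ — is resolved in the paper exactly by the first of the two remedies you name, namely the truncated continuous potentials $\phi_{w,n}=\min\{n,\phi_w\}$ together with the monotone convergence theorem and the convention $\Floc^\ast(-\infty)=-\Floc(0)$.
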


\begin{proof}
	In analogy to the Kantorovich duality \eqref{eq:OTDuality} for the classical optimal transport problem \eqref{eq:OTProblem} we make use of a corresponding duality result for the unbalanced transport problem \eqref{eq:UnbalancedProblem},
	\begin{multline*}
		\W(\mu,\nu) = \sup\left\{
			-\int_\Omega \Floc^\ast(-\phi(x))\,\d\mu(x)
			-\int_\Omega \Floc^\ast(-\psi(x))\,\d\nu(x)
			\,\middle|\, \phi, \psi \in \cont(\Omega), \right. \\
		\left. \vphantom{\int_\Omega}
			\phi(x)+\psi(y) \leq c(x,y) \,\forall\,(x,y) \in \Omega\times\Omega
		\right\}.
	\end{multline*}
	This follows from \cite[Thm.\,4.11 and Cor.\,4.12]{LieroMielkeSavare-HellingerKantorovich-2015a}, where the former establishes the duality formula with $\phi$ and $\psi$ ranging over all lower semi-continuous simple functions and the latter allows us to use continuous functions instead, exploiting the fact that $\Floc^\ast$ is continuous on $\R$ by \cref{lem:ConjugateProperties}\allowbreak\eqref{item:FlocAstFinite}.
	Analogously to the proof of \cref{thm:OTSemiDiscrete} (dual tessellation formulation)
    we now parameterize the function $\psi$ on the set $\{x_i\}_{i=1}^M$ by a vector $w \in \R^M$, $w_i=\psi(x_i)$, and obtain
	\begin{multline}
		\label{eq:UnbalancedDuality}
		\W(\mu,\nu) = \sup\left\{
			-\int_\Omega \Floc^\ast(-\phi(x))\,\d\mu(x)
			-\sum_{i=1}^Mm_i\Floc^\ast(-w_i)
			\,\middle|\, \phi \in \cont(\Omega), w\in\R^M,\right. \\
		\left. \vphantom{\sum_{i=1}^M}
			\phi(x)+w_i \leq c(x,x_i) \,\forall\,x \in \Omega, i\in\{1,\ldots,M\}
		\right\}.
	\end{multline}
	Next, given $w\in\R^M$ we would like to optimize for $\phi$ as we did in \eqref{eq:PhiW}.
	Note though that $\phi_w=\infty$ on the residual set $\resid$, which in unbalanced transport may be nonnegligible despite finite $W(\mu,\nu)$.
	For this reason we argue by truncation:
	For given $w \in \R^M$ and $n \in \N$, the function $\phi=\phi_{w,n}$ with
	\begin{align*}
		\phi_{w,n}:\Omega\to\R,\quad\phi_{w,n}(x) = \min\{n, \min\{c(x,x_i)-w_i\,|\,i \in \{1,\ldots,M\} \} \}
	\end{align*}
	lies in $\cont(\Omega)$ and is feasible in \eqref{eq:UnbalancedDuality}.
 Moreover, for fixed $w$ the sequence $(\phi_{w,n})_{n \in \N}$ is a maximizing sequence for the maximization over $\phi$,
	and it converges pointwise monotonically to the function $\phi_{w}$ defined in \eqref{eq:PhiW}.
	By \cref{lem:ConjugateProperties}\allowbreak\eqref{item:FlocAstIncreasing} and \eqref{item:FlocAstFinite}, $z \mapsto -\Floc^\ast(-z)$ is real-valued, continuous, and increasing.
	Since $\phi_{w,n}(x) \geq -\max_i w_i$, $-\Floc^\ast(-\phi_{w,n}(x))$ is uniformly bounded from below with respect to $n$ and $x$. Consequently, also $\phi_w$ is bounded from below.
	Therefore the monotone convergence theorem implies that
	\begin{align*}
		\lim_{n \to \infty} -\int_\Omega \Floc^\ast(-\phi_{w,n}(x))\,\d\mu(x)
		= -\int_\Omega \Floc^\ast(-\phi_{w}(x))\,\d\mu(x),
	\end{align*}
	where by convention $\Floc^\ast(-\infty)=\lim_{z\to-\infty}\Floc^\ast(z)=-\Floc(0)$ (see \cref{lem:ConjugateProperties}).
	With this, \eqref{eq:UnbalancedDuality} finally becomes
	\begin{align}
		\label{eq:EUnbaPre}
		\W(\mu,\nu) = \sup \left\{
			-\int_\Omega \Floc^\ast(-\phi_{w}(x))\,\d\mu(x)-\sum_{i=1}^M m_i\Floc^\ast(-w_i)
			\,\middle|\,
			w \in \R^M \right\}\,.
	\end{align}
	Note that this objective is $>-\infty$ for all $w \in \R^M$ by the lower bound on $\phi_w$ and the monotonicity and real-valuedness of $\Floc^*$.
	Now we decompose the integration domain $\Omega$ into $\{\C_i(w)\}_{i=1}^M$ and $\resid$ (using once more $\mu \ll \Lebesgue$ and \cref{lem:CellNullOverlap}; it is only here that we use the radiality of the cost function $c$ so that the cells $C_i(w)$ are well-defined with negligible overlap). For $x \in \C_i(w)$ one finds
	$\phi_{w}(x)=c(x,x_i)-w_i$, while for $x \in \resid$ one obtains $\phi_{w}(x)=\infty$ and therefore $\Floc^\ast(-\phi_{w}(x))=-\Floc(0)$.
	This leads to expression \eqref{eq:UnbalancedTessellationDualObjective} and also implies that $\G(w)>-\infty$.
	
	For fixed $x \in \Omega$ the map $w \mapsto \phi_{w}(x)$ is concave (since it is a minimum over affine functions). Moreover, the map $z \mapsto -\Floc^\ast(-z)$ is concave and increasing (cf.~\cref{lem:ConjugateProperties}\allowbreak\eqref{item:FlocAstIncreasing}). Therefore, the objective function in \eqref{eq:EUnbaPre} and consequently $\G$ are concave functions of $w$.
\end{proof}

\begin{remark}[Finiteness of $\G$]\label{rem:finitenessG}
$\G\not\equiv\infty$ if and only if $\G$ is finite everywhere.
Indeed, the last summand in \eqref{eq:UnbalancedTessellationDualObjective} is independent of $w$, and $-\sum_{i=1}^M\Floc^\ast(-w_i)\,m_i$ is finite by \cref{lem:ConjugateProperties}\eqref{item:FlocAstFinite}.
Finally, if $\tilde\G(w)<\infty$ for some $w\in\R^M$, where $\tilde\G(w)=-\sum_{i=1}^M\int_{\C_i(w)}\Floc^\ast\big(-c(x,x_i)+w_i\big)\,\d\mu(x)$,
then for $g_i(x)\in\partial\Floc^\ast(-c(x,x_i)+w_i)$ and $\tilde g_i\in\partial\Floc^\ast(w_i)$ we have
\begin{multline*}
\tilde\G(\tilde w)
\leq-\sum_{i=1}^M\int_{\C_i(\tilde w)}\Floc^\ast\big(-c(x,x_i)+w_i\big)+g_i(x)(\tilde w_i-w_i)\,\d\mu(x)\\
\leq\tilde\G(w)+\sum_{i=1}^M|\tilde w_i-w_i|\int_{\C_i(w)}\tilde g_i\,\d\mu(x)
<\infty
\end{multline*}
for all $\tilde w\in\R^M$ by the convexity and monotonicity of $\Floc^\ast$ from \cref{lem:ConjugateProperties}\eqref{item:FlocAstNotNegInf}.

$\G\not\equiv\infty$ is for instance ensured by $\Floc(0)<\infty$ or by boundedness of $\cRad$ (which implies $\resid=\emptyset$).
Indeed, the former implies boundedness of $\Floc^\ast$ from below by \cref{lem:ConjugateProperties}\eqref{item:FlocAstBound}, while the latter implies uniform boundedness of $-c(x,x_i)+w_i$ from below
so that in either case the integrand of $\tilde\G$ is uniformly bounded from below.
\end{remark}

The following result generalizes the optimality conditions of \cref{thm:OTSemiDiscreteOptimality} to unbalanced transport.
\begin{theorem}[Optimality conditions]
\label{thm:UnbalancedOptimalityConditions}
	Let $\gamma \in \measp(\Omega \times \Omega)$, $w \in \R^M$, and set $\rho = \pushforward{\pi_1}{\gamma}$.
	If $\W(\mu,\nu) < \infty$ and $\gamma$ and $w$ are optimal for $\W(\mu,\nu)$ in \eqref{eq:UnbalancedProblem} and \eqref{eq:UnbalancedTessellationDual}, respectively, then
	\begin{subequations}
	\label{eq:UnbalancedOptimalityConditions}
	\begin{gather}
		\label{eq:UnbalancedOptimalityConditionsGamma}
		\gamma = \sum_{i=1}^M \rho \restr{\C_i(w)} \otimes \delta_{x_i}, \\
		\label{eq:UnbalancedOptimalityConditionsDensity}
		\RadNik{\rho}{\mu}(x) \in \partial \Floc^\ast(-c(x,x_i)+w_i)
			\tn{ for $\mu$-a.e.~} x \in \C_i(w), \quad
		\RadNik{\rho}{\mu}(x) = 0 \tn{ for } x \in \resid, \\
		\label{eq:UnbalancedOptimalityConditionsCells}
		\tfrac{\rho(\C_i(w))}{m_i} \in \partial \Floc^\ast(-w_i)\tn{ for }i \in \{1,\ldots,M\}.
	\end{gather}
	\end{subequations}
	Conversely, if $\gamma$ and $w$ satisfy \eqref{eq:UnbalancedOptimalityConditions}, then they are optimal in \eqref{eq:UnbalancedProblem} and \eqref{eq:UnbalancedTessellationDual}, respectively.
\end{theorem}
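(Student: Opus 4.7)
The plan is to adapt the proof of \cref{thm:OTSemiDiscreteOptimality}, replacing the hard marginal constraints by Fenchel--Young inequalities coupling $\Floc$ and $\Floc^\ast$. The key ingredient is the strong duality $\W(\mu,\nu)=\sup_w \G(w)$ already established in \cref{thm:UnbalancedTessellationDual}, which reduces the task to showing that, for any feasible $\gamma$ and arbitrary $w \in \R^M$, the primal--dual gap $\E(\gamma)-\G(w)$ is nonnegative and vanishes precisely under \eqref{eq:UnbalancedOptimalityConditions}.

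First I would decompose $\gamma$. Feasibility forces $\pi_{2\#}\gamma \ll \nu$, so $\gamma$ may be written as $\gamma = \sum_{i=1}^M \gamma_i \otimes \delta_{x_i}$ with $\gamma_i \in \measp(\Omega)$; setting $\rho=\pi_{1\#}\gamma=\sum_i\gamma_i$ and $u=\RadNik{\rho}{\mu}$ (well-defined since $\rho \ll \mu$), feasibility also forces $\gamma_i(\resid)=0$ (else $\int c\,d\gamma=\infty$), hence $u=0$ $\mu$-a.e.\ on $\resid$. Expanding
\[ \E(\gamma) = \sum_{i=1}^M \int_\Omega c(x,x_i)\,d\gamma_i(x) + \int_\Omega \Floc(u)\,d\mu + \sum_{i=1}^M m_i \Floc(\gamma_i(\Omega)/m_i) \]
and using \eqref{eq:PhiW} to rewrite $\G(w)=-\int_\Omega \Floc^\ast(-\phi_w)\,d\mu - \sum_i m_i \Floc^\ast(-w_i)$ (with the convention $\Floc^\ast(-\infty)=-\Floc(0)$ from \cref{lem:ConjugateProperties}\eqref{item:FlocZeroFinite} absorbing the $\Floc(0)\mu(\resid)$ term), the gap $\E(\gamma)-\G(w)$ splits into three nonnegative pieces, each corresponding to one condition in \eqref{eq:UnbalancedOptimalityConditions}: \emph{(I)} the integrated Fenchel--Young gap $\int_\Omega[\Floc(u)+\Floc^\ast(-\phi_w)+u\phi_w]\,d\mu$ for the first marginal, vanishing iff $u(x)\in\partial\Floc^\ast(-\phi_w(x))$ $\mu$-a.e.; \emph{(II)} the Fenchel--Young gap $\sum_i m_i[\Floc(\gamma_i(\Omega)/m_i)+\Floc^\ast(-w_i)+(\gamma_i(\Omega)/m_i)w_i]$ for the second marginal, vanishing iff $\gamma_i(\Omega)/m_i\in\partial\Floc^\ast(-w_i)$ for each $i$; and \emph{(III)} the matching gap $\sum_i\int[c(x,x_i)-\phi_w(x)-w_i]\,d\gamma_i$ coming from $\phi_w(x)+w_i\leq c(x,x_i)$ (with equality iff $x\in\C_i(w)$), vanishing iff each $\gamma_i$ concentrates on $\C_i(w)$.

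To conclude I would extract the optimality conditions. Vanishing of (III) combined with $\gamma_i\ll\mu$ and \cref{lem:CellNullOverlap} forces $\gamma_i=\rho\restr\C_i(w)$, which is \eqref{eq:UnbalancedOptimalityConditionsGamma}; given this, $\gamma_i(\Omega)=\rho(\C_i(w))$, turning (II) into \eqref{eq:UnbalancedOptimalityConditionsCells}; and on $\C_i(w)$ one has $\phi_w(x)=c(x,x_i)-w_i$, so (I) becomes the first part of \eqref{eq:UnbalancedOptimalityConditionsDensity} (its second part $u=0$ on $\resid$ being automatic from feasibility). Conversely, \eqref{eq:UnbalancedOptimalityConditions} makes all three gaps vanish, so $\E(\gamma)=\G(w)$, whence both are optimal by strong duality. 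The main technical delicacy is the treatment of $\resid$ where $\phi_w=\infty$: I would handle this either via the truncation $\phi_{w,n}=\min(n,\phi_w)$ used in the proof of \cref{thm:UnbalancedTessellationDual}, or directly by observing that $u=0$ on $\resid$ makes the Fenchel--Young gap there vanish automatically under the $\Floc^\ast(-\infty)=-\Floc(0)$ convention.
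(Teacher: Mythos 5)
Your proposal is correct and follows essentially the same route as the paper: decompose $\gamma=\sum_i\gamma_i\otimes\delta_{x_i}$, express the primal--dual gap $\E(\gamma)-\G(w)$, and split it into nonnegative Fenchel--Young and matching terms whose simultaneous vanishing is exactly \eqref{eq:UnbalancedOptimalityConditions}, with the residual set handled via the convention $\Floc^\ast(-\infty)=-\Floc(0)$. The only detail the paper treats slightly more carefully is the converse when $\E(\gamma)=\G(w)=+\infty$ (so that the ``gap'' is not literally well-defined), which it resolves by noting that then $\W(\mu,\nu)=\infty$ and optimality is trivial.
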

\begin{proof}
	Let $\gamma \in \measp(\Omega \times \Omega)$ be such that $\E(\gamma)$ in \eqref{eq:UnbalancedEnergy} is finite. This implies that $\gamma$ can be written as $\gamma=\sum_{i=1}^M \gamma_i \otimes \delta_{x_i}$ for $\gamma_i \in \measp(\Omega)$, $\sum_{i=1}^M \gamma_i = \pushforward{\pi_1}{\gamma}= \rho \ll \mu$ and $\rho(\resid)=0$.
	(Note that the same holds true if \eqref{eq:UnbalancedOptimalityConditions} is assumed instead of $\E(\gamma)<\infty$.)
	We obtain
	\begin{multline*}
		\E(\gamma) = \int_{\Omega \times \Omega} c\,\d\gamma
			+ \Fint(\rho|\mu)
			+ \Fint(\pushforward{\pi_2}{\gamma}|\nu) \\
		= \sum_{i=1}^M \int_{\Omega \setminus \resid} \c(x,x_i)\,\d\gamma_i(x)
			+ \int_{\Omega \setminus \resid} \Floc\big(\RadNik{\rho}{\mu}(x)\big)\,\d\mu(x)
			+ \Floc(0) \cdot \mu(\resid)
			+ \sum_{i=1}^M \Floc\big(\tfrac{\gamma_i(\Omega)}{m_i}\big) \cdot m_i
	\end{multline*}
	so that the duality gap between the primal and dual formulations \eqref{eq:UnbalancedProblem} and \eqref{eq:UnbalancedTessellationDual} reads
	\begin{multline*}
	 \E(\gamma)-\G(w)= \sum_{i=1}^M \int_{\Omega \setminus \resid} \c(x,x_i)\,\d\gamma_i(x)
	 + \int_{\Omega \setminus \resid} \left[
	 	\Floc\big(\RadNik{\rho}{\mu}(x)\big)+\Floc^\ast(-\phi_{w}(x))
	 	\right] \d\mu(x) \\
	 + \sum_{i=1}^M \left(
	 	\Floc\big(\tfrac{\gamma_i(\Omega)}{m_i}\big)
	 	+\Floc^\ast(-w_i)
	 	\right) \cdot m_i\,.
	 \end{multline*}
	Using the Fenchel--Young inequality, which states that $\Floc(s)+\Floc^\ast(z) \geq s \cdot z$ with equality if and only if $z \in \partial \Floc(s)$ or equivalently $s \in \partial \Floc^\ast(z)$ \cite[Prop.\,13.13 and Thm.\,16.23]{BauschkeCombettes11}, we obtain the lower bound
	\begin{align*}
	 \E(\gamma)-\G(w)&\geq\sum_{i=1}^M \int_{\Omega \setminus \resid} \c(x,x_i)\,\d\gamma_i(x)
	 - \int_{\Omega \setminus \resid} \phi_{w}(x) \, \d\rho(x)
	 -\sum_{i=1}^M w_i \cdot \gamma_i(\Omega)\\
	 &= \sum_{i=1}^M \int_{\Omega \setminus \resid}
		 \left[ \c(x,x_i)-w_i-\phi_{w}(x) \right] \d\gamma_i(x)\geq0\,,
	\end{align*}
        where the first inequality is an equality if and only if $\RadNik{\rho}{\mu}(x) \in \partial \Floc^\ast(-\phi_{w}(x))$ for $\mu$-almost every $x \in \Omega \setminus \resid$ and $\tfrac{\gamma_i(\Omega)}{m_i} \in \partial \Floc^\ast(-w_i)$ for $i=1,\ldots,M$,
        and where the second inequality is an equality if and only if $\spt\gamma_i\subset\C_i(w)$ and thus $\gamma_i=\rho\restr\C_i(w)$ for $i=1,\ldots,M$.
        As a consequence, we have $\E(\gamma)-\G(w)=0$ if and only if \eqref{eq:UnbalancedOptimalityConditions} holds.
	
	Now let $\W(\mu,\nu)<\infty$ and $\gamma$ and $w$ be optimal in \eqref{eq:UnbalancedProblem} and \eqref{eq:UnbalancedTessellationDual} so that $\W(\mu,\nu)=\E(\gamma)=\G(w)<\infty$. Then necessarily $\E(\gamma)-\G(w)=0$ and so \eqref{eq:UnbalancedOptimalityConditions} holds.
	Conversely, if \eqref{eq:UnbalancedOptimalityConditions} holds, then if $\E(\gamma)<\infty$ or $\G(w)<\infty$ (so that the difference $\E(\gamma)-\G(w)$ is well-defined), the above argument shows that $\E(\gamma)-\G(w)=0$, which due to $\E(\gamma)\geq\W(\mu,\nu)\geq\G(w)$ implies $\W(\mu,\nu)=\E(\gamma)=\G(w)$ and thus the optimality of $\gamma$ and $w$.
	If on the other hand $\E(\gamma)=\G(w)=\infty$, then $\W(\mu,\nu)=\infty$ so that $\gamma$ and $w$ are trivially optimal.
\end{proof}

\begin{corollary}[Uniqueness of coupling]
	\label{cor:UnbalancedOptimalityConditionsII}
	Let $\W(\mu,\nu)<\infty$ and $w$ be optimal for \eqref{eq:UnbalancedTessellationDual}. Then the unique minimizer $\gamma$ for \eqref{eq:UnbalancedProblem} is given by \eqref{eq:UnbalancedOptimalityConditionsGamma}, where $\rho$ is uniquely determined by \eqref{eq:UnbalancedOptimalityConditionsDensity} and automatically satisfies \eqref{eq:UnbalancedOptimalityConditionsCells}.
\end{corollary}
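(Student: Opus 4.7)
The plan is to obtain the corollary by combining existence of a primal minimizer with the necessary-and-sufficient optimality conditions already established in \cref{thm:UnbalancedOptimalityConditions}, and then to upgrade the statement ``$\rho$ satisfies \eqref{eq:UnbalancedOptimalityConditionsDensity}'' into an actual uniqueness assertion by exploiting the radial structure of $c$ together with $\mu\ll\Lebesgue$.

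First I would invoke \cref{thm:UnbalancedProblemExistence} to produce some optimal $\gamma\in\measp(\Omega\times\Omega)$. Since $\W(\mu,\nu)<\infty$ and both $\gamma$ and the given $w$ are optimal, \cref{thm:UnbalancedOptimalityConditions} yields the full set of conditions \eqref{eq:UnbalancedOptimalityConditions}. In particular, \eqref{eq:UnbalancedOptimalityConditionsGamma} pins down the structure of $\gamma$ as a direct sum of products $\rho\restr\C_i(w)\otimes\delta_{x_i}$, so $\gamma$ is encoded entirely by its first marginal $\rho=\pushforward{\pi_1}{\gamma}$; and \eqref{eq:UnbalancedOptimalityConditionsCells} comes for free because it is among the three conclusions of the previous theorem. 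Thus it only remains to show that \eqref{eq:UnbalancedOptimalityConditionsDensity} actually fixes $\rho$ uniquely.

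The key step is the following subdifferential argument. Since $\Floc^\ast$ is a proper convex function on $\R$ (\cref{lem:ConjugateProperties}\eqref{item:FlocAstFinite}), its set $N\subset\R$ of non-differentiability points is at most countable, and $\partial\Floc^\ast(z)$ is a singleton for every $z\in\R\setminus N$. By radiality (\cref{def:RadialCost}) the map $x\mapsto -c(x,x_i)+w_i$ on $\C_i(w)$ factors through $\cRad$ and $d(\cdot,x_i)$, both continuous and strictly increasing; consequently $\{x\in\C_i(w)\,|\,-c(x,x_i)+w_i\in N\}$ is contained in a countable union of spheres centred at $x_i$ and therefore has vanishing Lebesgue measure, exactly as in the proof of \cref{lem:CellNullOverlap}. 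Using $\mu\ll\Lebesgue$, this exceptional set is also $\mu$-null, so on its complement $\partial\Floc^\ast(-c(x,x_i)+w_i)$ is single-valued and \eqref{eq:UnbalancedOptimalityConditionsDensity} prescribes $\RadNik{\rho}{\mu}(x)$ pointwise. Combined with $\RadNik{\rho}{\mu}\equiv 0$ on $\resid$, this determines $\rho$ uniquely and hence, via \eqref{eq:UnbalancedOptimalityConditionsGamma}, determines $\gamma$ uniquely.

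The main obstacle is precisely this uniqueness of $\RadNik{\rho}{\mu}$, because the paper does not assume strict convexity of $\Floc$, so $\Floc^\ast$ can genuinely fail to be differentiable; the remedy is to show that the bad set of values is hit only on a $\mu$-null subset of each Laguerre cell. Everything else is bookkeeping: once $\rho$ is pinned down, the automatic validity of \eqref{eq:UnbalancedOptimalityConditionsCells} is simply the statement that the now-unique $\gamma$ together with the optimal $w$ must satisfy all three conditions of \cref{thm:UnbalancedOptimalityConditions}, so the cell-mass balance is a consistency requirement rather than an additional constraint on $\rho$.
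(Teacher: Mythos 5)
Your proposal is correct and follows essentially the same route as the paper: existence of a minimizer from \cref{thm:UnbalancedProblemExistence}, necessity of conditions \eqref{eq:UnbalancedOptimalityConditions} from \cref{thm:UnbalancedOptimalityConditions}, and uniqueness of $\rho$ via the countability of the non-differentiability set of $\Floc^\ast$ combined with the sphere-level-set argument of \cref{lem:CellNullOverlap} and $\mu\ll\Lebesgue$. No gaps.
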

\begin{proof}
	We first show that \eqref{eq:UnbalancedOptimalityConditionsDensity} fully specifies $\rho$.
	Let $S$ be the set where $\partial \Floc^\ast$ is not a singleton.
	By convexity, $S$ is countable.
	In analogy to \cref{lem:CellNullOverlap}, for any $s \in S$ the set $\{ x \in \R\,|\,-\c(x,x_i)+w_i=s\}$ is Lebesgue negligible. Since $S$ is countable, the set $\{ x \in \R\,|\,-\c(x,x_i)+w_i \in S \}$ is Lebesgue-negligible and thus also $\mu$-negligible. Consequently, $\RadNik{\rho}{\mu}$ is uniquely defined by \eqref{eq:UnbalancedOptimalityConditionsDensity} on $\Omega$ up to a $\mu$-negligible set.
	
	For $\W(\mu,\nu)<\infty$, conditions \eqref{eq:UnbalancedOptimalityConditions} are necessary and must therefore be satisfied by any minimizer $\gamma$ (which exists by \cref{thm:UnbalancedProblemExistence}).
	Therefore, as $\rho$ is uniquely determined by \eqref{eq:UnbalancedOptimalityConditionsDensity}, so is $\gamma$ by \eqref{eq:UnbalancedOptimalityConditionsGamma}.
	Optimality of $\gamma$ and $w$ ensures that \eqref{eq:UnbalancedOptimalityConditionsCells} also holds.
\end{proof}

To gain some intuition we will illustrate the previous results with numerical examples in the next section.
Here we just spell out consistency with the balanced transport setting.
\begin{remark}[Balanced transport]
	\label{exp:BalancedTransportOptimality}
	For classical optimal transport with $\Floc=\iota_{\{1\}}$ (such as the Wasserstein-2 distance from \cref{exm:models}\allowbreak\eqref{item:ModelOT}) one obtains $-\Floc^\ast(-z)=z$.
	Then \eqref{eq:UnbalancedTessellationDual} becomes \eqref{eq:SemiDiscreteUnconstrainedDual} (and finiteness of $\WOT(\mu,\nu)$ implies that $\mu(\resid)=0$).
	Furthermore, with $\partial \Floc^\ast(z)=1$ for all $z$, equation \eqref{eq:UnbalancedOptimalityConditionsDensity} implies $\rho=\mu \restr{(\Omega \setminus \resid)}=\mu$. Then \eqref{eq:UnbalancedOptimalityConditionsGamma} and \eqref{eq:UnbalancedOptimalityConditionsCells} become \eqref{eq:SemiDiscreteOptimalGamma}.
\end{remark}

From the derivation of \eqref{eq:UnbalancedTessellationDual} we learned that it can be interpreted as a variant of the dual problem to \eqref{eq:UnbalancedProblem}, where one of the dual variables is parametrized by $w$.
Given the form of primal optimizers $\gamma$ according to \cref{thm:UnbalancedOptimalityConditions}, we can formulate a corresponding variant of the primal problem.
\begin{corollary}[Primal tessellation formulation of semi-discrete unbalanced transport]
\label{cor:UnbalancedTessellationPrimal}
Assume $W(\mu,\nu)<\infty$ and that optimizers of the unbalanced primal and dual problems \eqref{eq:UnbalancedProblem} and \eqref{eq:UnbalancedTessellationDual} exist. Then
\begin{multline}
	\label{eq:UnbalancedTessellationPrimal}
	\W(\mu,\nu) = \min \left\{
		\sum_{i=1}^M \int_{\C_i(w)} \c(x,x_i)\,\d \rho(x) + \Fint(\rho|\mu) \right. \\
		\left. + \sum_{i=1}^M \Floc\big(\tfrac{\rho(\C_i(w))}{m_i}\big) \cdot m_i
		\,\middle|\,
		w \in \R^M,\,\rho \in \measp(\Omega),\, \rho \restr{\resid}=0 \right\}.
\end{multline}
If $\gamma$ and $w$ are optimal in \eqref{eq:UnbalancedProblem} and \eqref{eq:UnbalancedTessellationDual}, respectively, then $w$ and $\rho = \pushforward{\pi_1}{\gamma}$ are optimal in \eqref{eq:UnbalancedTessellationPrimal}.
Conversely, if $w$ and $\rho$ are optimal in \eqref{eq:UnbalancedTessellationPrimal}, then \eqref{eq:UnbalancedOptimalityConditionsGamma} defines an optimal $\gamma$ for \eqref{eq:UnbalancedProblem}.
\end{corollary}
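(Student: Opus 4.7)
The plan is to establish a value-preserving correspondence between feasible plans $\gamma$ in \eqref{eq:UnbalancedProblem} and pairs $(w,\rho)$ feasible in \eqref{eq:UnbalancedTessellationPrimal}. For each such pair I would set $\gamma_{w,\rho} = \sum_{i=1}^M (\rho\restr\C_i(w))\otimes \delta_{x_i}$, which clearly lies in $\measp(\Omega\times\Omega)$, and then directly compute the three terms of $\E(\gamma_{w,\rho})$ defined in \eqref{eq:UnbalancedEnergy}. The cost term is $\sum_i \int_{\C_i(w)} c(x,x_i)\,\d\rho(x)$ by construction; the first marginal equals $\sum_i \rho\restr\C_i(w)$, which in the regime where $\rho\ll\mu$ (necessary for the objective of \eqref{eq:UnbalancedTessellationPrimal} to be finite) agrees $\rho$-almost everywhere with $\rho$ itself, using $\rho\restr\resid=0$ together with the fact that the $\C_i(w)$ pairwise overlap on Lebesgue-null sets by \cref{lem:CellNullOverlap}; and the second marginal is $\sum_i \rho(\C_i(w))\,\delta_{x_i}$, which is absolutely continuous with respect to $\nu$ with density $\rho(\C_i(w))/m_i$ at $x_i$, giving $\Fint(\pushforward{\pi_2}{\gamma_{w,\rho}}|\nu) = \sum_i \Floc(\rho(\C_i(w))/m_i)\,m_i$. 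Summing these three contributions shows that $\E(\gamma_{w,\rho})$ coincides with the objective of \eqref{eq:UnbalancedTessellationPrimal} at $(w,\rho)$; hence $\W(\mu,\nu) \leq \E(\gamma_{w,\rho})$ for every feasible pair, and so $\W(\mu,\nu) \leq \inf\eqref{eq:UnbalancedTessellationPrimal}$.

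For the reverse inequality and for attainment, I would exploit the assumed existence of optimizers $\gamma^\ast$ and $w^\ast$ for \eqref{eq:UnbalancedProblem} and \eqref{eq:UnbalancedTessellationDual}, respectively. By \cref{thm:UnbalancedOptimalityConditions}, together they satisfy \eqref{eq:UnbalancedOptimalityConditionsGamma}, i.e.\ $\gamma^\ast = \sum_i \rho^\ast\restr\C_i(w^\ast)\otimes\delta_{x_i}$ with $\rho^\ast := \pushforward{\pi_1}{\gamma^\ast}$ satisfying $\rho^\ast\restr\resid=0$ (as noted in the proof of \cref{thm:UnbalancedOptimalityConditions}, this is automatic from $\E(\gamma^\ast)<\infty$). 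Thus $(w^\ast,\rho^\ast)$ is feasible in \eqref{eq:UnbalancedTessellationPrimal}, and $\gamma_{w^\ast,\rho^\ast}=\gamma^\ast$, so the computation of the previous paragraph identifies the \eqref{eq:UnbalancedTessellationPrimal}-objective at $(w^\ast,\rho^\ast)$ with $\E(\gamma^\ast)=\W(\mu,\nu)$. This yields both the matching inequality $\min\eqref{eq:UnbalancedTessellationPrimal}\leq\W(\mu,\nu)$ and the attainment of the infimum.

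The two bidirectional optimality claims then follow at once from the equality $\W(\mu,\nu)=\min\eqref{eq:UnbalancedTessellationPrimal}$ and the value identity $\E(\gamma_{w,\rho}) = $ objective$(w,\rho)$: if $\gamma,w$ are optimal for \eqref{eq:UnbalancedProblem} and \eqref{eq:UnbalancedTessellationDual}, the second paragraph shows that $(w,\pushforward{\pi_1}{\gamma})$ attains the minimum of \eqref{eq:UnbalancedTessellationPrimal}; conversely, if $(w,\rho)$ is optimal in \eqref{eq:UnbalancedTessellationPrimal}, then $\gamma_{w,\rho}$ (which is exactly \eqref{eq:UnbalancedOptimalityConditionsGamma}) is feasible for \eqref{eq:UnbalancedProblem} with $\E(\gamma_{w,\rho})=\W(\mu,\nu)$, hence optimal. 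The main technical point to watch carefully is the first-marginal computation, which requires combining $\rho\ll\mu\ll\Lebesgue$ (forced by finiteness of $\Fint(\rho|\mu)$) with \cref{lem:CellNullOverlap} to neutralize the potential double-counting of mass on shared cell boundaries; everything else is bookkeeping on the terms of $\E$.
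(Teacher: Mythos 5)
Your proposal is correct and follows essentially the same route as the paper: identify the objective of \eqref{eq:UnbalancedTessellationPrimal} at a feasible pair $(w,\rho)$ with $\E(\gamma)$ for the induced plan $\gamma=\sum_{i=1}^M \rho\restr{\C_i(w)}\otimes\delta_{x_i}$, conclude that \eqref{eq:UnbalancedTessellationPrimal} minimizes $\E$ over a subset of $\measp(\Omega\times\Omega)$ so its value is at least $\W(\mu,\nu)$, and then use the assumed optimizers together with \cref{thm:UnbalancedOptimalityConditions} to show the value is attained. The only difference is that you spell out the marginal computations (using $\rho\restr\resid=0$ and \cref{lem:CellNullOverlap}) that the paper's proof asserts without detail; this is a welcome elaboration, not a deviation.
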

\begin{proof}
For any $w \in \R^M$ and $\rho \in \measp(\Omega)$ with $\rho \restr{\resid}=0$, the objective function in \eqref{eq:UnbalancedTessellationPrimal} is equal to $\E(\gamma)$ for $\gamma=\sum_{i=1}^M \rho \restr{\C_i(w)} \otimes \delta_{x_i}$.
Therefore, minimizing \eqref{eq:UnbalancedTessellationPrimal} corresponds to minimizing $\E$ over a particular subset of $\measp(\Omega \times \Omega)$, which implies that the right-hand side of \eqref{eq:UnbalancedTessellationPrimal} is no smaller than $\W(\mu,\nu)$.
Now, if $\gamma$ and $w$ are a pair of optimizers for \eqref{eq:UnbalancedProblem} and \eqref{eq:UnbalancedTessellationDual}, then by \eqref{eq:UnbalancedOptimalityConditions}, the objective function in \eqref{eq:UnbalancedTessellationPrimal} for $w$ and $\rho=\pushforward{\pi_1}{\gamma}$ becomes $\E(\gamma)=\W(\mu,\nu)$ so that the right-hand side of \eqref{eq:UnbalancedTessellationPrimal} actually equals $\W(\mu,\nu)$ and $w$ and $\rho$ are minimizers of \eqref{eq:UnbalancedTessellationPrimal}.

Conversely, if $w$ and $\rho$ minimize \eqref{eq:UnbalancedTessellationPrimal}, the induced $\gamma$ must minimize $\E$.
\end{proof}
\begin{remark}[Optimality of dual variable]\label{rem:optimalityDual}
	The converse conclusion that optimal $w$ in \eqref{eq:UnbalancedTessellationPrimal} are optimal in \eqref{eq:UnbalancedTessellationDual} is in general not true.
	Indeed, \eqref{eq:UnbalancedTessellationPrimal} only depends on $w$ via the cells $\{\C_i(w)\}_{i=1}^M$ and therefore is invariant with respect to adding the same constant to all components of $w$, which does not change the cells. In the balanced case, when $\Floc^*(z)=z$, then \eqref{eq:UnbalancedTessellationDual} is also invariant under such transformations (as long as $\mu(\Omega)=\sum_{i=1}^M m_i$). However, for general $\Floc$, $\Floc^*$ is nonlinear and the objective function of \eqref{eq:UnbalancedTessellationDual} is no longer invariant.
	
	Similarly, when the support of the optimal $\rho$ in \eqref{eq:UnbalancedTessellationPrimal} is bounded away from the boundary of some $\C_i(w)$ (see \cref{fig:SemiDisc_model_comparison}, right), then slightly changing the corresponding $w_i$ will not affect the value of \eqref{eq:UnbalancedTessellationPrimal}, whereas \eqref{eq:UnbalancedTessellationDual} will usually not exhibit this invariance.
	
	Finally, if $c(x,x_i)$ becomes infinite for sufficiently small $d(x,x_i)$, then there exists an isolated cell $\C_i(w)$ that is strictly bounded away from any other cell (see \cref{fig:SemiDisc_HK_base}, right). In that case, none of the cells $\{\C_j(w)\}_{j=1}^M$ depend on $w_i$, and so neither does \eqref{eq:UnbalancedTessellationPrimal}.
	However, the objective function of \eqref{eq:UnbalancedTessellationDual} in general still depends on $w_i$ via $\Floc^\ast$.
\end{remark}

\begin{remark}[Primal tessellation formulation for classical optimal transport]
	For classical optimal transport with $\Floc=\iota_{\{1\}}$, the term $\Fint(\rho|\mu)$ in \eqref{eq:UnbalancedTessellationPrimal} is finite (and zero) if and only if $\rho=\mu$. Likewise, $\sum_{i=1}^M \Floc\big(\tfrac{\rho(\C_i(w))}{m_i}\big) \cdot m_i$ is finite (and zero) if and only if $\rho(\C_i(w))=m_i$.
	These are the optimality conditions given in \cref{thm:OTSemiDiscreteOptimality}.
	Thus, the objective function in \eqref{eq:UnbalancedTessellationPrimal} is finite only where it is optimal, making it somewhat pathological.
\end{remark}

Even though \eqref{eq:UnbalancedTessellationPrimal} is less pathological for more general unbalanced transport problems, we focus on \eqref{eq:UnbalancedTessellationDual} for numerical optimization.

\subsection{Numerical examples and different models}
\label{sec:UnbalancedNumerics}
Depending on the choice of the cost function $\c$ and the marginal discrepancy $\Fint$, the semi-discrete unbalanced transport problem exhibits several qualitatively different regimes which we will illustrate in this section. The discussion will be complemented with numerical examples.

Problem \eqref{eq:UnbalancedTessellationDual} is an unconstrained, finite-dimensional maximization problem over a concave objective.
For simplicity, throughout this section we shall assume that the cost $\c$ is radial and $\Floc^\ast$ is differentiable or equivalently $\Floc$ is strictly convex (those assumptions are satisfied for the models from \cref{exm:models}).
This allows us to derive the objective function gradient in \cref{thm:UnbalancedTessellationGradient} and to treat the optimization problem with methods of smooth (as opposed to nonsmooth) optimization. A simple discretization scheme is given in \cref{rem:Discretization}. The resulting discrete problem is solved with an L-BFGS quasi-Newton method \cite{Nocedal-LBFGS-TOMS-97}. As stated in \cref{rem:PDGap}, the quality of the obtained solution can easily be verified via the primal-dual gap between \eqref{eq:UnbalancedTessellationDual} and \eqref{eq:UnbalancedTessellationPrimal}.
The special case of balanced optimal transport is discussed in \cref{exp:BalancedTransportGradient}.
Afterwards we provide numerical illustrations for several examples of different unbalanced models.

To calculate the gradient of $\G$ we make use of the following lemma.
\begin{lemma}[Derivative of integral functionals]\label{thm:derivativeIntegral}
Let $f:\Omega\times\R^M\to\R$ be uniformly Lipschitz in its second argument, and let $\mu\in\measp(\Omega)$ and $u\in\R^M$ be such that $\R^M \ni \tilde u \mapsto f(x,\tilde u)$ is differentiable at $\tilde u=u$ for $\mu$-almost all $x\in\Omega$.
Define $\mathcal{H}:\R^M \to \R$ by $\mathcal H(\tilde u) = \int_\Omega f(x,\tilde u)\,\d\mu(x)$. If $|\mathcal H(u)|<\infty$, then $\mathcal{H}$ is differentiable at $\tilde u=u$ with
\begin{equation*}
\frac{\partial \mathcal{H}}{\partial\tilde u}(u)=\int_\Omega \frac{\partial f}{\partial\tilde u}(x,u)\,\d\mu(x)\,.
\end{equation*}
\end{lemma}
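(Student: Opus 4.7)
The plan is a direct application of the dominated convergence theorem once the integrand's difference quotients have been uniformly bounded.

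First I would unpack the Lipschitz hypothesis: assume there exists $L\ge 0$ such that $|f(x,v)-f(x,w)|\le L\|v-w\|$ for all $x\in\Omega$ and $v,w\in\R^M$. Two immediate consequences that I would state up front: (i) whenever $\partial f/\partial\tilde u(x,u)$ exists, its operator norm is bounded by $L$, so the integral $\int_\Omega (\partial f/\partial\tilde u)(x,u)\,\d\mu(x)$ is well-defined since $\mu$ is a finite Radon measure on the compact set $\Omega$; and (ii) $\mathcal H$ itself is $L\mu(\Omega)$-Lipschitz on $\R^M$, so the finiteness assumption $\mathcal H(u)<\infty$ actually gives $\mathcal H(\tilde u)<\infty$ for all $\tilde u$ in a neighbourhood of $u$ (indeed, everywhere), and the difference quotients to follow are well-posed.

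Next, to establish Fréchet differentiability at $u$, I would take an arbitrary sequence $h_n\in\R^M\setminus\{0\}$ with $h_n\to 0$ and define
\begin{equation*}
g_n(x) \;=\; \frac{f(x,u+h_n)-f(x,u)-\frac{\partial f}{\partial\tilde u}(x,u)\cdot h_n}{\|h_n\|}.
\end{equation*}
By the pointwise differentiability hypothesis, $g_n(x)\to 0$ for $\mu$-a.e.~$x\in\Omega$. The Lipschitz bound controls the first difference by $L\|h_n\|$, and the bound on $\partial f/\partial\tilde u(x,u)$ controls the correction by $L\|h_n\|$, so $|g_n(x)|\le 2L$ pointwise $\mu$-a.e. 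Since $\mu(\Omega)<\infty$, the constant function $2L$ is an integrable majorant, and the dominated convergence theorem yields $\int_\Omega g_n\,\d\mu\to 0$. Rearranging this integral gives
\begin{equation*}
\frac{\mathcal H(u+h_n)-\mathcal H(u)-\Bigl(\int_\Omega\frac{\partial f}{\partial\tilde u}(x,u)\,\d\mu(x)\Bigr)\cdot h_n}{\|h_n\|}\to 0,
\end{equation*}
which, as $(h_n)$ was arbitrary, is precisely Fréchet differentiability of $\mathcal H$ at $u$ with the claimed gradient.

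There is really no significant obstacle: the whole argument rests on the uniform Lipschitz bound supplying a dominating function that is integrable because $\mu$ is finite on the compact $\Omega$. The only points I would handle with a little care are keeping the sign and direction of the correction term $\partial f/\partial \tilde u(x,u)\cdot h_n$ consistent so that $|g_n|\le 2L$ holds (rather than a subtler bound), and stating explicitly that the integrand of the limiting gradient is bounded, hence integrable, so that the difference forming $\int_\Omega g_n\,\d\mu$ is genuinely equal to the displayed Newton quotient and not a formal manipulation of possibly infinite quantities.
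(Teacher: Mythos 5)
Your proof is correct and rests on the same mechanism as the paper's: the Lipschitz constant supplies a uniform (hence $\mu$-integrable, since $\mu$ is a finite measure on $\Omega$) majorant, and dominated convergence passes the pointwise limit under the integral. The only difference in packaging is that you bound the full first-order remainder $g_n$ and conclude Fr\'echet differentiability directly, whereas the paper computes directional derivatives one direction at a time and then appeals to their linearity in the direction; your route quietly dispenses with the (true, but left implicit in the paper) step that a Lipschitz function on $\R^M$ which is G\^ateaux differentiable with linear derivative at a point is Fr\'echet differentiable there.
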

\begin{proof}
This is similar to the proof of Theorem 2.27 in \cite{Folland}, which however only covers the case where $M=1$ and $f(x,\cdot)$ is differentiable for all $x \in \Omega$.
We show that the directional derivative of $\mathcal H$ in an arbitrary direction $\hat u\in\R^M$ exists and is of the desired form.
Indeed, let $L>0$ be the Lipschitz constant of $f$ in its second argument.
By assumption there exists $S\subset\Omega$ Lebesgue-negligible such that $f(x,\cdot)$ is differentiable at $u$ for all $x\in\Omega\setminus S$.
Now for $t\neq0$,
\begin{equation*}
\frac{\mathcal H(u+t\hat u)-\mathcal H(u)}t
=\int_{\Omega\setminus S}\frac{f(x,u+t\hat u)-f(x,u)}t\,\d\mu(x)\,.
\end{equation*}
Since the integrand is bounded in absolute value by $L\|\hat u\|$ and since it converges pointwise to $\frac{\partial f}{\partial u}(x,u)\cdot\hat u$ as $t\to0$, by the Dominated Convergence Theorem we have
\begin{equation*}
\lim_{t\to0}\frac{\mathcal H(u+t\hat u)-\mathcal H(u)}t
=\int_{\Omega\setminus S}\frac{\partial f}{\partial\tilde u}(x,u)\cdot\hat u\,\d\mu(x)
=\int_{\Omega}\frac{\partial f}{\partial\tilde u}(x,u) \,\d\mu(x)\cdot\hat u\,.
\end{equation*}
The arbitrariness of $\hat u$, the linearity of the directional derivative, and the Lipschitz continuity of $\mathcal H$ imply that $\mathcal{H}$ is differentiable and has the desired form.
\end{proof}

Note that one may also replace the Lipschitz condition by a convexity or concavity condition,
in which case dominated convergence would be replaced with monotone convergence;
such a procedure would allow one to treat different examples in the later application \cref{lem:gradJ}.

\begin{theorem}[Gradient of dual tessellation formulation]
	\label{thm:UnbalancedTessellationGradient}
	If $\Floc$ is strictly convex and $\G$ from \cref{thm:UnbalancedTessellationDual} is finite (e.g., if $\Floc(0)$ is finite or $\cRad$ is bounded, see \cref{rem:finitenessG})
then $\G$ is differentiable with
	\begin{align}
		\label{eq:UnbalancedTessellationGradient}
		\frac{\partial \G}{\partial w_i}(w) = (\Floc^\ast)'(-w_i)\cdot m_i
			- \int_{\C_i(w)} (\Floc^\ast)'(-c(x,x_i)+w_i)\,\d\mu(x)\,.
	\end{align}
\end{theorem}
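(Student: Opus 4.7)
The plan is to avoid differentiating under a moving integration domain by working with the alternative representation derived inside the proof of \cref{thm:UnbalancedTessellationDual}, namely
\[
\G(w) = -\int_\Omega \Floc^\ast(-\phi_w(x))\,\d\mu(x) - \sum_{i=1}^M m_i\,\Floc^\ast(-w_i),
\]
where $\phi_w(x) = \min\{c(x,x_i) - w_i : i=1,\ldots,M\}$, extended by $+\infty$ on $\resid$ with the convention $\Floc^\ast(-\infty) = -\Floc(0)$ from \cref{lem:ConjugateProperties}\eqref{item:FlocZeroFinite}. The finite sum is immediately smooth: by strict convexity of $\Floc$ combined with \cref{lem:ConjugateProperties}\eqref{item:FlocAstDiffble}, $\Floc^\ast$ is continuously differentiable, and differentiating produces the first term $m_i(\Floc^\ast)'(-w_i)$ of the claimed formula.

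For the integral I would apply \cref{thm:derivativeIntegral} to $f(x,\tilde w) := -\Floc^\ast(-\phi_{\tilde w}(x))$. The pointwise differentiability of $\tilde w \mapsto f(x,\tilde w)$ at $\tilde w = w$ is the easier ingredient: for $\mu$-almost every $x \in \Omega \setminus \resid$ the minimum defining $\phi_w(x)$ is attained strictly at a single index $i^*(x)$, because the set where two indices tie lies in $\bigcup_{i\neq j} \C_i(w)\cap\C_j(w)$, which is $\mu$-negligible by \cref{lem:CellNullOverlap} together with $\mu\ll\Lebesgue$. Since strict attainment is an open condition in $\tilde w$, one has $\phi_{\tilde w}(x) = c(x,x_{i^*(x)}) - \tilde w_{i^*(x)}$ on a whole neighborhood of $w$, which gives $\partial_{\tilde w_i} f(x,w) = -(\Floc^\ast)'(-c(x,x_i)+w_i)$ for $x\in\C_i(w)$ and $0$ for $x\in\C_j(w)$ with $j\neq i$. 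On $\resid$, $f$ takes the constant value $\Floc(0)$ and therefore contributes nothing; this is precisely where $\Floc(0)<\infty$ is needed, the other side of the dichotomy being $\cRad$ bounded, which forces $\resid = \emptyset$.

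The main obstacle is verifying the uniform-in-$x$ Lipschitz hypothesis of \cref{thm:derivativeIntegral} on some neighborhood $B(w,r)$. Observe that $\tilde w \mapsto \phi_{\tilde w}(x)$ is $1$-Lipschitz as a pointwise minimum of unit-slope affine functions, and satisfies $-\phi_{\tilde w}(x) \le \|\tilde w\|_\infty$, giving a uniform-in-$x$ upper bound on $-\phi_{\tilde w}(x)$ on $B(w,r)$. If $\cRad$ is bounded, a uniform two-sided bound on $\phi_{\tilde w}(x)$ also holds, and since $(\Floc^\ast)'$ is continuous by \cref{lem:ConjugateProperties}\eqref{item:FlocAstDiffble} it is bounded on the resulting compact range, so $\Floc^\ast$ is Lipschitz there. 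If instead $\Floc(0)<\infty$, I would invoke \cref{lem:ConjugateProperties}\eqref{item:FlocZeroFinite} which yields $(\Floc^\ast)'(z)\to 0$ as $z\to -\infty$, so $(\Floc^\ast)'$ is bounded on $(-\infty,C]$ for any finite $C$; composing this bound with the $1$-Lipschitz $-\phi_{\tilde w}$ produces the required uniform estimate. Applying \cref{thm:derivativeIntegral} then yields $\partial_{w_i}\int_\Omega f(x,w)\,\d\mu(x) = -\int_{\C_i(w)}(\Floc^\ast)'(-c(x,x_i)+w_i)\,\d\mu(x)$, and combining with the derivative of the finite sum gives \eqref{eq:UnbalancedTessellationGradient}.
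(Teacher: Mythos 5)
Your proposal is correct and follows essentially the same route as the paper: rewrite $\G$ as $\int_\Omega f(x,w)\,\d\mu(x)$ minus the smooth finite sum, check that $f(x,\cdot)$ is uniformly Lipschitz and differentiable off the $\mu$-negligible union of cell boundaries, and apply \cref{thm:derivativeIntegral}. The only cosmetic difference is that in the case $\Floc(0)<\infty$ you invoke \cref{lem:ConjugateProperties}\eqref{item:FlocZeroFinite} to bound $(\Floc^\ast)'$ on $(-\infty,C]$, whereas monotonicity of $(\Floc^\ast)'$ (convexity plus \cref{lem:ConjugateProperties}\eqref{item:FlocAstIncreasing}) already gives this in both cases, the dichotomy in the hypothesis being needed only to ensure $\G(w)$ is finite.
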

\begin{proof}
Define
\[
f(x,w)=\min\{-\Floc^\ast(-\c(x,x_j)+w_j)\,|\,j=1,\ldots,M\}.
\]
Since $\Floc^\ast$ is increasing by \cref{lem:ConjugateProperties}\allowbreak\eqref{item:FlocAstIncreasing}, then $f(x,w) = -\Floc^\ast(-\c(x,x_i)+w_i)$ for $x \in \C_i(w)$. If $\cRad$ is bounded, the residual set $\resid$ is empty; otherwise we have $f(x,w)=-\Floc^\ast(-\infty)=\Floc(0)$ for $x \in \resid$.
Therefore
\[
\G(w)=\int_\Omega f(x,w)\,\d\mu(x)-\sum_{i=1}^M\Floc^\ast(-w_i)\cdot m_i.
\]
By \cref{rem:finitenessG}, $\G(w)$ is finite for all $w\in\R^M$.
Thus, there exists $\tilde\Omega\subset\Omega$ with $\mu(\Omega\setminus\tilde\Omega)=0$ such that $f(x,w)$ is finite for all $x\in\tilde\Omega$ and $w\in\R^M$.
Now consider the function $\Omega\times\R \ni (x,v) \mapsto f_i(x,v)=-\Floc^\ast(-\c(x,x_i)+v)$, where $i \in \{1 , \ldots ,M\}$. 
Furthermore, by \cref{lem:ConjugateProperties}\allowbreak\eqref{item:FlocAstDiffble} the strict convexity of $\Floc$ implies continuous differentiability of its conjugate $\Floc^\ast$ so that $f_i(x,\cdot)$ is differentiable for any $x\in\Omega$ with $f_i(x,\cdot)$ finite.
Moreover, since $\Floc^\ast$ is convex and increasing, ${\partial f_i}/{\partial v}$ is nonpositive and decreasing so that $f_i(x,\cdot)$ (if finite) is Lipschitz on $(-\infty,\omega]$ for any $\omega\in\R$ with Lipschitz constant $L\leq-\frac{\partial f_i}{\partial v}(x,\omega)\leq(\Floc^\ast)'(\omega)$.
Consequently, $(-\infty,\omega]^M \ni \hat{w} \mapsto f(x,\hat{w})$ is Lipschitz with constant $\sqrt M(\Floc^\ast)'(\omega)$ for all $x\in\tilde\Omega$ and differentiable in $\hat{w}$ for all $x\in\tilde\Omega\setminus S$, where $S=\bigcup_{i=1}^M\partial\C_i(w)$ is Lebesgue-negligible and thus also $\mu$-negligible.
Thus, by the previous \namecref{thm:derivativeIntegral}, $\G$ is differentiable with
\begin{equation*}
\frac{\partial \G(w)}{\partial w_i}
= (\Floc^\ast)'(-w_i)\cdot m_i
+ \int_{\Omega} \frac{\partial f}{\partial w_i}(x,w)\,\d\mu(x)\,,
\end{equation*}
where $\frac{\partial f}{\partial w_i}(x,w)=-(\Floc^\ast)'(-c(x,x_i)+w_i)$ for $\mu$-almost all $x\in\C_i(w)$ and $\frac{\partial f}{\partial w_i}(x,w)=0$ for $\mu$-almost all $x\notin\C_i(w)$.
\end{proof}

\begin{remark}[Balanced transport]
	\label{exp:BalancedTransportGradient}
	For classical optimal transport with $\Floc=\iota_{\{1\}}$ as in \cref{exp:BalancedTransportOptimality}, \cref{thm:UnbalancedTessellationGradient} reduces to well-known results. In particular, \eqref{eq:UnbalancedTessellationGradient} becomes
	\begin{align}
		\label{eq:SemiDiscOTGrad}
		\frac{\partial \G(w)}{\partial w_i} = m_i-\mu(\C_i(w))\,.
	\end{align}
	For more details we refer, for example, to 
 \cite[Thm.~1.1]{KitagawaMerigotThibert} or \cite[Thm.~40]{MerigotThibertOT}. For marginals $\mu=\tilde\mu\Lebesgue$ with $\tilde\mu\in\cont(\Omega)$ the Hessian
	\begin{align}
		\label{eq:SemiDiscOTHess}
		\frac{\partial^2 \G(w)}{\partial w_i \partial w_j} =- \frac{\partial \mu(\C_i(w))}{\partial w_j}
  	\end{align}
        can also be computed explicitly in terms of face integrals (see, for instance, \cite[Thm.~1.3]{KitagawaMerigotThibert} and \cite[Thm.~45]{MerigotThibertOT}). Therefore  \eqref{eq:SemiDiscreteUnconstrainedDual} lends itself to efficient numerical optimization \cite{AHA98,MultiscaleTransport2011,KitagawaMerigotThibert,LevySemiDiscrete2015}.
	For special cost functions, most prominently for the squared Euclidean distance, the gradient \eqref{eq:SemiDiscOTGrad} and Hessian \eqref{eq:SemiDiscOTHess} can be evaluated numerically efficiently and with high precision, allowing the application of Newton's method \cite{KitagawaMerigotThibert}.

The semi-discrete unbalanced problem \eqref{eq:UnbalancedTessellationDual} is more complicated due to the influence of the marginal fidelity $\Fint$ and since we are often interested in non-standard cost functions such as $\cHK$. Generalizing the above methods for balanced transport to the unbalanced case is therefore beyond the scope of this article.
\end{remark}

\begin{remark}[Discretization]
	\label{rem:Discretization}
	Problem \eqref{eq:UnbalancedTessellationDual} is already finite-dimensional. We must however evaluate the integrals over $\C_i(w)$.
	For classical optimal transport and special cost functions $c$, these integrals can be evaluated essentially in closed form (see \cref{exp:BalancedTransportGradient}).
	For simplicity, in this section we approximate $(\Omega,\mu)$ with Dirac masses on a fine Cartesian grid.
The cells $\{\C_i(w)\}_{i=1}^M$ are approximated using brute force by computing $c(x,x_i)-w_i$ for each point $x$ in the Cartesian grid for each $i \in \{1,\ldots,M\}$. Points $x$ on the common boundaries of several cells $\{\C_i(w)\}_{i=1}^M$ are arbitrarily assigned to one of those cells. (An efficient GPU-implementation of this brute force method can be found in \cite{BFRSB24}.)
Note that for the special cost $c(x,y)=|x-y|^2$, the Laguerre diagram $\{\C_i(w)\}_{i=1}^M$ can be computed exactly, up to machine precision, and much more efficiently using, e.g., the lifting method \cite[Sec.~6.2.2]{AurenhammerKleinLee}, which has complexity $\mathcal{O}(M \log M)$ in $\R^2$ and $\mathcal{O}(M^2)$ in $\R^3$.
Our discretization yields an approximation of $\G(w)$ from \eqref{eq:UnbalancedTessellationDualObjective} and of $\nabla \G(w)$ from \eqref{eq:UnbalancedTessellationGradient}, as required for the quasi-Newton method.
	In the numerical examples below we use $\Omega=[0,L]^2$ for some $L>0$ and approximate it by a regular Cartesian grid with $1000$ points along each dimension.
\end{remark}

\begin{remark}[Primal-dual gap]
	\label{rem:PDGap}
	The sub-optimality of any vector $w \in \R^M$ for \eqref{eq:UnbalancedTessellationDual} can be bounded by the primal-dual gap between \eqref{eq:UnbalancedTessellationDualObjective} and the objective of \eqref{eq:UnbalancedTessellationPrimal}. We avoid the remaining optimization over $\rho$ in \eqref{eq:UnbalancedTessellationPrimal} by generating a feasible candidate via \eqref{eq:UnbalancedOptimalityConditionsDensity}. \Cref{cor:UnbalancedOptimalityConditionsII,cor:UnbalancedTessellationPrimal} guarantee that the primal-dual gap vanishes for optimal $w$.
\end{remark}

In the remainder of the section we illustrate semi-discrete unbalanced transport by numerical examples.
In particular, we showcase qualitative differences between different models as well as phenomena due to model-inherent length scales, which do not occur in classical, balanced transport.

\begin{figure}[hbt]
	\centering
	\includegraphics{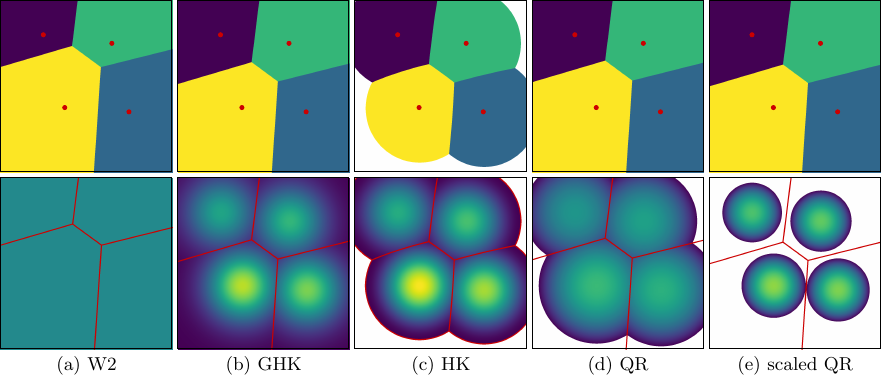}
	\caption{Semi-discrete transport between the Lebesgue measure on $\Omega=[0,L]^2$, $L=5$ and a discrete measure with $M=4$ Dirac masses of locations $(x_1,x_2,x_3,x_4)=L \cdot ((0.375,0.375),(0.75,0.35),(0.65,0.75),(0.25,0.8))$ and weights $(m_1,m_2,m_3,m_4)=|\Omega|\cdot(0.38,0.29,0.19,0.14)$.
	\textit{Top row:} optimal cells $\{\C_i(w)\}_{i=1}^M$; the residual set $\resid$ is represented by white; location of the discrete points $(x_1,\ldots,x_M)$ is indicated with red dots. %
	\textit{Bottom row:} optimal marginal $\rho$ (identical colour scale in all figures; regions with $\RadNik{\rho}{\mu}(x)=0$ are white for emphasis) and boundaries of cells $\{\C_i(w)\}_{i=1}^M$ (red) are shown for models \eqref{item:ModelOT}--\eqref{item:ModelQuadratic} from \cref{exm:models,exm:modelComparison}.
	Figure (e) shows the same model as \eqref{item:ModelQuadratic}, only with $\c(x,y)=[d(x,y)/2]^2$ instead of $\c(x,y)=d(x,y)^2$; on some cells $\spt \rho$ is now strictly bounded away from the boundaries of $\C_i(w)$. %
	}
	\label{fig:SemiDisc_model_comparison}
\end{figure}

\begin{example}[Comparison of unbalanced transport models]\label{exm:modelComparison}
The structure of the optimal unbalanced coupling $\gamma$ in \eqref{eq:UnbalancedProblem} and its first marginal $\rho = \pushforward{\pi_1}{\gamma}$ can vary substantially, depending on the choices for $\c$ and $\Floc$.
Below we discuss the models from \cref{exm:models} with a corresponding numerical illustration in \cref{fig:SemiDisc_model_comparison}.
\begin{enumerate}[label=(\alph*),ref=\alph*]
	\item
	\textbf{Standard Wasserstein-2 distance (W2, \cref{fig:SemiDisc_model_comparison}(\ref{item:ModelOT})).}
	Since this is an instance of balanced transport, necessarily we have $\rho=\mu$. Furthermore, the cells $\{\C_i(w)\}_{i=1}^M$ are standard, polygonal Laguerre cells, and $\resid=\emptyset$.
	\item
	\textbf{Gaussian Hellinger--Kantorovich distance (GHK, \cref{fig:SemiDisc_model_comparison}(\ref{item:ModelGHK})).}
	The cells are still standard polygonal Laguerre cells with $\resid=\emptyset$. This time, however, we usually have $\rho \neq \mu$. Nevertheless, we find $\spt \rho = \spt \mu$ since \eqref{eq:UnbalancedOptimalityConditionsDensity} with $(\FlocKL^\ast)'(z)=e^z>0$ implies $\RadNik\rho\mu>0$. This behaviour essentially originates from the infinite slope of $\FlocKL$ in $0$.
	Since $\c(x,y)=d(x,y)^2$, the density $\RadNik\rho\mu$ is piecewise Gaussian.
	\item
	\textbf{Hellinger--Kantorovich distance (HK, \cref{fig:SemiDisc_model_comparison}(\ref{item:ModelHK})).}
	This time, the generalized Laguerre cells have curved boundaries, and also $\resid$ is in general no longer empty, as $\cHK(x,y)=+\infty$ for $d(x,y)\geq \tfrac{\pi}{2}$.
	Thus, $\rho=0$ on $\resid$ by \eqref{eq:UnbalancedOptimalityConditionsDensity}, independent of $\mu$. However, similarly to \eqref{item:ModelGHK} we have $\RadNik{\rho}{\mu}(x)>0$ on the complement of $\resid$, the union of all generalized Laguerre cells.
	\item
	\textbf{Quadratic regularization (QR, \cref{fig:SemiDisc_model_comparison}(\ref{item:ModelQuadratic})-(e)).}
	Since $\c(x,y)=d(x,y)^2$, once more the cells are polygonal Laguerre cells and $\resid=\emptyset$. However, \eqref{eq:UnbalancedOptimalityConditionsDensity} together with $(\Floc^\ast)'(z)=0$ for $z \leq -2$ implies $\RadNik{\rho}{\mu}(x)=0$ whenever $\phi_{w}(x)= \min\big\{ c(x,x_i)-w_i\,|\,i =1,\ldots,M \big\}\geq 2$, even on $\Omega \setminus \resid$. Intuitively, this is possible since $\Floc$ and its right derivative are finite at $z=0$ so that, for large transport costs $c(x,x_i)$, mass removal may be more profitable than transport.
\end{enumerate}
We emphasize that the reasons for $\RadNik{\rho}{\mu}(x)=0$ between models \eqref{item:ModelHK} and \eqref{item:ModelQuadratic} are different: In the Hellinger--Kantorovich case, $c(x,x_i)=\infty$ for $x \in \resid$ prohibits any transport. In the quadratic case, despite finite transport cost and $\resid=\emptyset$, it may still be cheaper to remove and create mass via the fidelity $\Floc$, due to its behaviour at $z=0$.
Also the slope at which $\RadNik\rho\mu$ approaches zero is different for both models, as can be seen in the one-dimensional slice visualized in \cref{fig:SemiDisc_model_comparison_slice}.
\end{example}

\begin{figure}[H]
	\centering
	\includegraphics{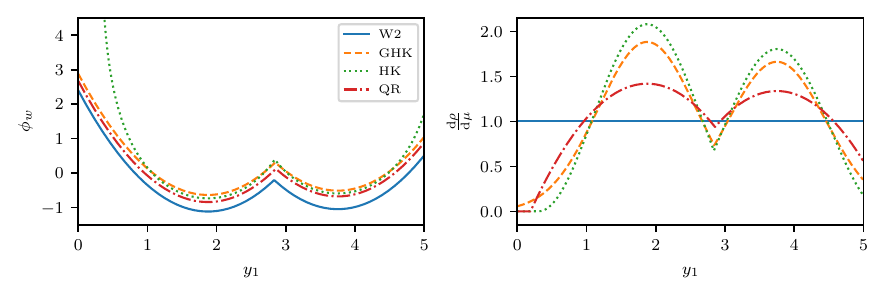}
	\caption{One-dimensional slices of computational results from \cref{fig:SemiDisc_model_comparison} along $[0,L]\times\{0.375\,L\}$ with $L=5$. %
	\textit{Left:} $\phi_w$ for optimal $w \in \R^M$. For models \eqref{item:ModelOT}, \eqref{item:ModelGHK}, and \eqref{item:ModelQuadratic}, $\phi_{w}$ is piecewise quadratic; for \eqref{item:ModelHK} the profile is determined by $\cHK$ and $\phi_w=\infty$ on $\resid \neq \emptyset$.
	\textit{Right:} Optimal density $\RadNik{\rho}{\mu}$, where $\RadNik{\rho}{\mu}=(\Floc^\ast)'(-\phi_{w})$ on $\Omega \setminus \resid$ and $0$ elsewhere by \eqref{eq:UnbalancedOptimalityConditionsDensity}. For \eqref{item:ModelOT} the density is constant, for \eqref{item:ModelGHK} it is piecewise Gaussian, for \eqref{item:ModelHK} it is piecewise given by $\cos(d(y,x_i))^2$ on $\Omega \setminus \resid$ and $0$ on $\resid$, and for \eqref{item:ModelQuadratic} it is given by truncated paraboloids.}
	\label{fig:SemiDisc_model_comparison_slice}
\end{figure}

\begin{example}[Varying transport length scales]
\label{ex:LengthScales}
As illustrated in the previous comparison of different models, unbalanced transport models typically have an intrinsic length scale which determines how far mass is optimally transported.
Varying this length scale for fixed $\mu$ and $\nu$ changes the behaviour of the semi-discrete transport.
For illustration we concentrate on the Hellinger--Kantorovich distance and vary its intrinsic length scale by replacing $\c(x,y)=\cHK(x,y)$ with
\begin{equation*}
\c(x,y)=\cHK^\varepsilon(x,y)=\cHK(\tfrac x\varepsilon,\tfrac y\varepsilon) = \begin{cases} -2\log\big[\cos\big(d(x,y)/\varepsilon\big)\big] & \tn{if } d(x,y) < \tfrac{\pi}{2}\varepsilon, \\ \infty & \tn{otherwise,} \end{cases}
\end{equation*}
that is, we set
\begin{multline*}
\HK^\varepsilon(\mu,\nu)^2=
\\
\inf\left\{\int_{\Omega\times\Omega}\cHK^\varepsilon\d\gamma
+\int_\Omega\FlocKL\left(\RadNik{\pushforward{\pi_1}{\gamma}}\mu\right)\d\mu
+\int_\Omega\FlocKL\left(\RadNik{\pushforward{\pi_2}{\gamma}}\nu\right)\d\nu
\,\middle|\,\gamma \in \measp(\Omega \times \Omega)\right\}\,.
\end{multline*}
Note that this is equivalent to rescaling the domain $\Omega$ by the factor $\frac1\varepsilon$ and simultaneously replacing the measures $\mu$ and $\nu$ by their pushforwards under $x\mapsto\frac x\varepsilon$.

For large $\varepsilon$, transport becomes very cheap relative to mass changes and thus asymptotically, as $\varepsilon \to \infty$, one recovers the Wasserstein-2 distance:
$\lim_{\varepsilon\to\infty}\varepsilon\HK^\varepsilon(\mu,\nu)=W_2(\mu,\nu)$  by \cite[Thm.\,7.24]{LieroMielkeSavare-HellingerKantorovich-2015a}. In particular the distance diverges when $\mu(\Omega) \neq \nu(\Omega)$.
Conversely, as $\varepsilon \searrow 0$, transport becomes increasingly expensive and mass change is preferred. Asymptotically one obtains $\lim_{\varepsilon \searrow 0} \HK^\varepsilon(\mu,\nu)=\Hell(\mu,\nu)$ \cite[Thm.\,7.22]{LieroMielkeSavare-HellingerKantorovich-2015a}, where $\Hell$ denotes the Hellinger distance
\begin{align*}
	\Hell(\mu,\nu)^2 = \int_{\Omega} \left(\sqrt{\RadNik{\mu}{\sigma}}-\sqrt{\RadNik{\nu}{\sigma}}\right)^2\d\sigma
\end{align*}
for $\sigma \in \measp(\Omega)$ an arbitrary reference measure with $\mu, \nu \ll \sigma$ (for instance $|\mu|+|\nu|$ with $|\cdot|$ indicating the total variation measure). By positive one-homogeneity of the function $(m_1,m_2) \mapsto (\sqrt{m_1}-\sqrt{m_2})^2$ the value of $\Hell(\mu,\nu)$ does not depend on the choice of $\sigma$.
In our semi-discrete setting, $\mu$ and $\nu$ are always mutually singular so that $\Hell(\mu,\nu)^2 = \mu(\Omega) + \nu(\Omega)$.

\begin{figure}[H]
	\centering
	\includegraphics{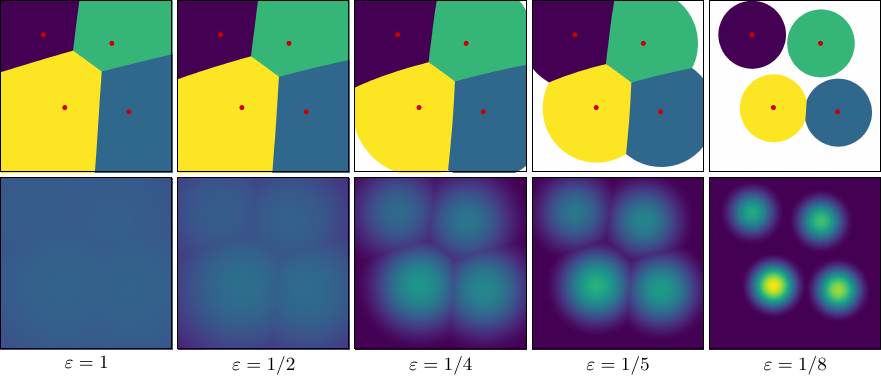}
	\caption{Semi-discrete Hellinger--Kantorovich transport on $\Omega=[0,1]^2$ (using the same values for $x_i/L$ and $m_i/|\Omega|$ as in \cref{fig:SemiDisc_model_comparison}) for different length scales $\varepsilon$. %
	\textit{Top row:} optimal cells $\{\C_i(w)\}_{i=1}^M$; the residual set $\resid$ is represented by white; location of the discrete points $(x_1,\ldots,x_M)$ is indicated with red dots. %
	\textit{Bottom row:} optimal marginal $\rho$ (using the same colour scale for all images).
	For large $\varepsilon$ the behaviour is similar to that of the standard semi-discrete Wasserstein-2 distance. As $\varepsilon$ decreases, the effects of unbalanced transport become increasingly prominent.}
	\label{fig:SemiDisc_HK_base}
\end{figure}

\begin{figure}[H]
	\centering
	\includegraphics{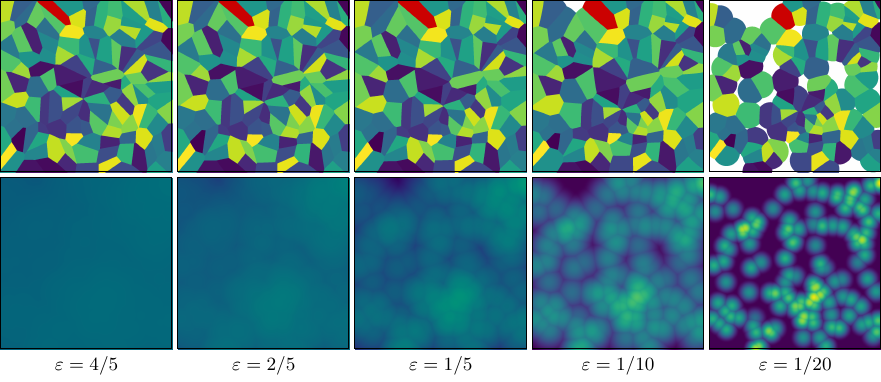}
	\caption{Semi-discrete Hellinger--Kantorovich distance on $\Omega=[0,1]^2$ for different length scales $\varepsilon$, as in \cref{fig:SemiDisc_HK_base}, but for $M=128$. The evolution of one cell $\C_i(w)$ for fixed $i$ is highlighted in red (\textit{top row}).
	For large $\varepsilon$, $\C_i(w)$ is essentially the standard Wasserstein-2 Laguerre cell, not necessarily containing $x_i$. For small $\varepsilon$, $\C_i(w)$ becomes (a fraction of) the open ball $B_{\varepsilon\pi/2}(x_i)$.}
	\label{fig:SemiDisc_HK_large}
\end{figure}

\Cref{fig:SemiDisc_HK_base} illustrates the optimal cells $\{\C_i(w)\}_{i=1}^M$ and marginal densities $\rho=\pushforward{\pi_1}{\gamma}$ between the uniform volume measure $\mu=\Lebesgue$ on $\Omega=[0,1]^2$ and a discrete measure $\nu=\sum_{i=1}^M m_i\,\delta_{x_i}$ for $M=4$, using different values of the intrinsic length scale $\varepsilon$
(the same experiment with $M=128$ discrete points is shown in \cref{fig:SemiDisc_HK_large}).
As expected, for large $\varepsilon$ the cells $\{\C_i(w)\}_{i=1}^M$ look very similar to standard, polygonal Laguerre cells for the squared Euclidean distance $\c(x,y)=d(x,y)^2$, and the residual set $\resid$ is empty. The optimal $\rho$ is essentially equal to $\mu$, as dictated by balanced transport.
As $\varepsilon$ decreases, the boundaries between the cells become curved. Eventually $\resid$ becomes non-empty, and finally the cells start to decompose into disjoint discs. In accordance, the density of the optimal marginal $\rho$ is given on each cell $\C_i(w)$ by $\cos(d(x,x_i)/\varepsilon)^2 \cdot e^{w_i}$.
The interpolatory behaviour of $\HK^\varepsilon$ between the Wasserstein-2 distance $W_2$ and the Hellinger distance $\Hell$ for $\varepsilon\to\infty$ and $\varepsilon \searrow 0$ is numerically verified in \cref{fig:SemiDisc_HK_energies}.

\begin{figure}[H]
	\centering
	\includegraphics{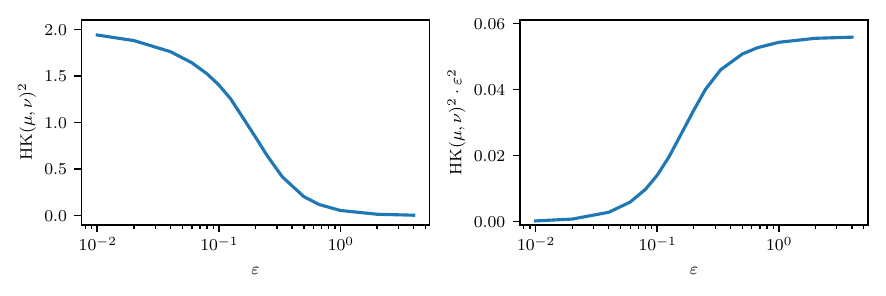}
	\caption{$\HK^\varepsilon(\mu,\nu)^2$ for different length scales $\varepsilon$ for the setup from \cref{fig:SemiDisc_HK_base}. %
	\textit{Left:} as $\varepsilon \searrow 0$, $\HK^\varepsilon(\mu,\nu)^2$ tends to $\Hell(\mu,\nu)^2=2$. %
	\textit{Right:} as $\varepsilon \to \infty$, $\varepsilon^2\HK^\varepsilon(\mu,\nu)^2$ tends to $W_2(\mu,\nu)^2$.}
	\label{fig:SemiDisc_HK_energies}
\end{figure}
\end{example}

\clearpage

\section{Unbalanced quantization}\label{sec:quantization}
In this section we study the unbalanced quantization problem:
we aim to approximate a given Lebesgue-continuous measure $\mu\in\measp(\Omega)$ by a discrete, quantized measure $\nu=\sum_{i=1}^Mm_i \cdot \delta_{x_i}$ with at most $M\in\N$ Dirac masses,
where the unbalanced transport cost serves as a measure of approximation quality.
To be precise, we consider the optimization problem
\begin{equation}\label{eqn:quantization}
\min\left\{\W(\mu,\nu)\,\middle|\,\nu=\sum_{i=1}^M m_i \delta_{x_i},\,x_1,\ldots,x_M\in\Omega,\,m_1,\ldots,m_M\geq0\right\}\,.
\end{equation}
Existence of minimizers will be established in \Cref{thm:quantizationTessellation}.
Applications include optimal location problems (economic planning), information theory (vector quantization) and particle methods for PDEs (approximation of continuous initial data by particles).
We first characterize optimal particle configurations in terms of Voronoi diagrams, then consider a corresponding numerical scheme, and finally prove the optimal energy scaling of the quantization problem in terms of $M$ for the case $d=2$.
The procedure essentially follows the one known for classical optimal transport; the important fact is that the Voronoi tessellation structure survives if mass changes are allowed.

The quantization cost will essentially depend on the function $-\Floc^*\circ(-\cRad)$ for $\cRad$ from \cref{def:RadialCost} (see \cref{thm:quantizationTessellation}), which is why we briefly list a few of its relevant properties. We will mention below when these properties are assumed or used.
\begin{lemma}[Properties of $-\Floc^*\circ(-\cRad)$]\label{thm:propertiesQuantizationIntegrand}
Let $\cRad$ define a radial cost and $\Floc$ a marginal discrepancy, and consider the following conditions on them,
\begin{gather}
\label{eq:QuantizationAsymptoticAssumptions_a}
\Floc(0)<\infty
\qquad\tn{or}\qquad
\cRad(s)<\infty\; \forall \, s \in [0,\infty),\\
\label{eq:QuantizationAsymptoticAssumptions_b}
\Floc(0) > 0\,,\\
\label{eq:QuantizationFlocAssumption_a}
\Floc(1)=0\,,\\
\label{eq:ellDiverges}
\lim_{s\to\infty}\cRad(s)=\infty\,,\\
\label{eq:ellLipschitz}
\ell\text{ is Lipschitz on }[0,\diam(\Omega)] \text{ or on } [0,\infty) \text{ when $\Omega$ is unbounded}.
\end{gather}
Under these conditions, $-\Floc^*\circ(-\cRad)$ satisfies the following properties.
\begin{enumerate}[label=\rm(P\arabic*),ref=P\arabic*]
\item\label{enm:quantIntMonotone}
$-\Floc^*\circ(-\cRad)$ is nondecreasing (increasing before it potentially reaches its maximum),
\item\label{enm:quantIntContinuous}
$-\Floc^*\circ(-\cRad)$ is continuous,
\item\label{enm:quantIntFinite}
$-\Floc^*\circ(-\cRad)<\infty$ on $(0,\infty)$,
\item\label{enm:quantIntPositive}
$-\Floc^*\circ(-\cRad)>0$ on $(0,\infty)$,
\item\label{enm:quantIntZero}
$-\Floc^*\circ(-\cRad)(0)=0$,
\item\label{enm:quantIntLimit}
$\lim_{s\to\infty}-\Floc^*\circ(-\cRad)(s)=\Floc(0)$,
\item\label{enm:quantIntLipschitz}
$-\Floc^*\circ(-\cRad)$ is Lipschitz on $[0,\diam(\Omega)]$ or on $[0,\infty)$ when $\Omega$ is unbounded.
\end{enumerate}
More specifically, \eqref{enm:quantIntMonotone} and \eqref{enm:quantIntContinuous} hold by properties of $\Floc$ and $\cRad$ according to \cref{def:RadialCost,def:MarginalDiscrepancy},
\eqref{eq:QuantizationAsymptoticAssumptions_a}$\Rightarrow$\eqref{enm:quantIntFinite},
\eqref{eq:QuantizationAsymptoticAssumptions_b}$\Rightarrow$\eqref{enm:quantIntPositive},
\eqref{eq:QuantizationFlocAssumption_a}$\Rightarrow$\eqref{enm:quantIntZero},
\eqref{eq:ellDiverges}$\Rightarrow$\eqref{enm:quantIntLimit}, and
\eqref{eq:ellLipschitz}$\Rightarrow$\eqref{enm:quantIntLipschitz}.
\end{lemma}

We leave the proof as a straightforward exercise, it essentially being a direct consequence of \cref{lem:ConjugateProperties,def:RadialCost,def:MarginalDiscrepancy}.
Note that the conditions are not necessary for the properties \eqref{enm:quantIntMonotone}-\eqref{enm:quantIntLipschitz} to hold,
which is why in the remainder of the section we will solely refer to these properties rather than to conditions on $\Floc$ and $\cRad$.
However, the above conditions on $\Floc$ and $\cRad$ are natural:
\eqref{eq:QuantizationAsymptoticAssumptions_a} expresses that it is always possible to either completely remove or transport mass at any given location with finite cost ($W(\mu,\nu)$ may still be infinite, if $\Floc(0)=\infty$, $\sup_z \cRad(z)=\infty$, and $\mu$ has strong tails),
\eqref{eq:QuantizationAsymptoticAssumptions_b} expresses that complete mass removal has a positive cost,
\eqref{eq:QuantizationFlocAssumption_a} expresses that there is zero cost for not changing the mass,
\eqref{eq:ellDiverges} expresses that the transport cost becomes infinite for infinite distances,
and \eqref{eq:ellLipschitz} ensures that the transport cost does not increase superlinearly.
Of course \eqref{eq:ellLipschitz} is not necessary for \eqref{enm:quantIntLipschitz}, since the growth of $\ell$ can be compensated by a sufficiently slow increase of $-\Floc^*(-\cdot)$, which corresponds to $\Floc$ growing only slowly (or being bounded) near zero. Examples for this are the GHK and HK distances; see \cref{ex:TessellationModels}.
Throughout this section we will assume that zero mass change induces zero cost, i.e.\ \eqref{eq:QuantizationFlocAssumption_a}.
This is the natural choice for approximating $\mu$, as it implies a preference for $\pushforward{\pi_1}{\gamma}=\mu$ in the first marginal fidelity term  $\Fint$ of \eqref{eq:UnbalancedEnergy}.
Since $\Floc(z)\geq 0$ by \cref{def:MarginalDiscrepancy}, a consequence is
\begin{gather}
\label{eq:QuantizationFlocAssumption_b}
	0 \in \partial \Floc(1) \qquad \Leftrightarrow \qquad 1 \in \partial\Floc^\ast(0)
\qquad \text{and} \qquad \Floc^\ast(0)=0.
\end{gather}
Also note that for the quantization problem only the behaviour of $\Floc$ on $[0,1]$ is relevant
since by a simple comparison argument one can see that any minimizer
$\nu$ in \eqref{eqn:quantization}
and any associated coupling $\gamma$ in \cref{def:UnbalancedTransport} satisfy $\pushforward{\pi_2}{\gamma}=\nu$ and $\pushforward{\pi_1}{\gamma}\leq\mu$.

\subsection{Unbalanced quantization as a Voronoi tessellation problem}
\label{sec:Quantization}
The following theorem shows that the quantization problem can equivalently be formulated as an optimization of the points $x_1,\ldots,x_M$ with a functional depending on the Voronoi tessellation induced by $(x_1,\ldots,x_M)$.
\begin{theorem}[Tessellation formulation of quantization problem]\label{thm:quantizationTessellation}
For $\Floc$ satisfying \eqref{eq:QuantizationFlocAssumption_a}, the unbalanced quantization problem \eqref{eqn:quantization} is equivalent to the minimization problem
\begin{equation}\label{eqn:quantizationTessellation}
\min\left\{J(x_1,\ldots,x_M)\,\middle|\,x_1,\ldots,x_M\in\Omega\right\}
\end{equation}
where
\[
J(x_1,\ldots,x_M)=\sum_{i=1}^M\int_{V_i(x_1,\ldots,x_M)} -\Floc^\ast(-\c(x,x_i))\,\d\mu(x)
\]
and
where $V_i(x_1,\ldots,x_M) = \{ x \in \Omega\,|\, d(x,x_i) \leq d(x,x_j) \text{ for } j=1,\ldots,M \, \}$ is the Voronoi cell associated with $x_i$ and we adopt the convention $-\Floc^\ast(-\infty)=\Floc(0)$ and $\partial\Floc^\ast(-\infty) = \{0\}$ (cf.~\cref{lem:ConjugateProperties}).
Indeed, the minimum values coincide and, if $(x_1,\ldots,x_M)$ minimizes \eqref{eqn:quantizationTessellation} and the minimal value is finite, then $(x_1,\ldots,x_M,m_1,\ldots,m_M)$ minimizes \eqref{eqn:quantization} for
\begin{equation} \label{eqn:quantizationTessellationMasses}
m_i=\int_{V_i(x_1,\ldots,x_M)}\partial\Floc^\ast(-\c(x,x_i))\,\d\mu(x)\,,\quad i=1,\ldots,M.
\end{equation}
(By the proof of \cref{cor:UnbalancedOptimalityConditionsII} the subgradient $\partial\Floc^\ast(-\c(x,x_i))$ contains a unique element for $\mu$-almost every $x$ and so the $m_i$ are well defined.)
Furthermore, the optimal transport plan $\gamma$ associated with $\W(\mu,\nu)$ only transports mass from each Voronoi cell $V_i(x_1,\ldots,x_M)$ to the corresponding point $x_i$.
\end{theorem}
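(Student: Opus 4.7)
The plan is to reduce the joint minimization in \eqref{eqn:quantization} over $(x_i,m_i)$ to the Voronoi tessellation problem \eqref{eqn:quantizationTessellation} by successively optimizing out $(m_i)$, then the dual weights $w$, and finally the interior marginal $\rho$. The main vehicle is the primal tessellation formulation \cref{cor:UnbalancedTessellationPrimal}, which already has the minimization in $(w,\rho)$ separated from the masses $m_i$ in a clean way. Throughout I shall assume $W(\mu,\nu)<\infty$ at the configurations of interest; the degenerate case $W(\mu,\nu)=\infty$ for every $\nu$ is trivial because one can always test $m_i=0$ and use $\Floc(0)<\infty$ where needed.

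For fixed $(x_i)$, \cref{cor:UnbalancedTessellationPrimal} expresses $W(\mu,\nu)$ as the minimum over $(w,\rho)$ of
\begin{equation*}
\sum_{i=1}^M\int_{\C_i(w)}c(x,x_i)\,\d\rho(x)+\Fint(\rho|\mu)+\sum_{i=1}^M\Floc\bigl(\rho(\C_i(w))/m_i\bigr)\,m_i,
\qquad \rho\restr\resid=0.
\end{equation*}
Viewing the last sum as a perspective functional in $m_i$ and using that $\Floc\geq0=\Floc(1)$ by assumption \eqref{eq:QuantizationFlocAssumption_a}, together with $\lim_{s\to\infty}\Floc(s)/s=\infty$ to handle $m_i=0$, the pointwise minimizer in $m_i\geq0$ is $m_i=\rho(\C_i(w))$, at which the sum vanishes. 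After this optimization the objective is $\sum_i\int_{\C_i(w)}c(x,x_i)\d\rho+\Fint(\rho|\mu)$ with $\rho\restr\resid=0$. Next I optimize in $w$: for any admissible $\rho$ one has $\sum_i\int_{\C_i(w)}c(x,x_i)\d\rho\geq\int_\Omega\min_i c(x,x_i)\d\rho$, with equality for $w=0$ (since then the generalized Laguerre partition coincides, up to $\mu$-null boundaries via \cref{lem:CellNullOverlap}, with the Voronoi partition restricted to $\Omega\setminus\resid$). Finally, writing $\rho=r\mu$ with $r\geq0$ and $r=0$ on $\resid$, and using that $\Floc=\infty$ on $(-\infty,0)$, the pointwise Fenchel identity $\inf_{r\in\R}[r\,c^*(x)+\Floc(r)]=-\Floc^*(-c^*(x))$ with $c^*(x)=\min_i c(x,x_i)$ reduces the minimization to
\begin{equation*}
\int_\Omega-\Floc^*(-c^*(x))\,\d\mu(x),
\end{equation*}
where $-\Floc^*(-\infty)=\Floc(0)$ on $\resid$ by \cref{lem:ConjugateProperties}\eqref{item:FlocZeroFinite}. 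Since $-\Floc^*(-\cdot)$ is monotone increasing by \cref{lem:ConjugateProperties}\eqref{item:FlocAstIncreasing}, the pointwise minimum commutes with this transformation; splitting $\Omega$ along the Voronoi partition (again using \cref{lem:CellNullOverlap}) yields $\int_\Omega-\Floc^*(-c^*(x))\d\mu=J(x_1,\ldots,x_M)$. Minimizing over $(x_i)$ then gives the equivalence of \eqref{eqn:quantization} and \eqref{eqn:quantizationTessellation}. The mass formula \eqref{eqn:quantizationTessellationMasses} drops out because the optimal $r$ in the Fenchel step must satisfy $r(x)\in\partial\Floc^*(-c(x,x_i))$ for $x\in V_i$, and then $m_i=\rho(V_i)=\int_{V_i}r\,\d\mu$; the statement about $\gamma$ follows from \cref{thm:UnbalancedOptimalityConditions} applied at $w=0$.

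The main technical obstacle is making the interchange of the infimum with the $\mu$-integral in the final stage rigorous when $\Floc(0)$ is finite (so that $r\equiv0$ is admissible and the pointwise optimum in $r$ is a measurable selection), and simultaneously justifying the mass-elimination step when some $\rho(\C_i(w))=0$: there one needs the convention $m\mapsto m\Floc(\rho(\C_i(w))/m)$ extended lower semi-continuously to $m=0$, which reads $0\cdot\Floc(0)=0$ in view of $\Floc(0)<\infty$. A secondary subtlety is that \cref{cor:UnbalancedTessellationPrimal} requires existence of dual optimizers; for configurations where this fails one can instead argue directly from the dual objective $\G$ of \cref{thm:UnbalancedTessellationDual}, since the dependence of $\G$ on $(m_i)$ is linear and the test choice $w=0$ with $m_i$ as in \eqref{eqn:quantizationTessellationMasses} already realizes the value $-J(x_1,\ldots,x_M)\cdot(-1)=J$, giving the inequality $W(\mu,\nu)\geq J$ directly, while the primal construction $\gamma=\sum_i(r\mu)\restr V_i\otimes\delta_{x_i}$ yields the reverse inequality.
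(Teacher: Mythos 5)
Your proposal is essentially the ``intuitive strategy'' that the paper itself sketches in the remark preceding its proof and then deliberately sidesteps: you start from the primal tessellation formulation (\cref{cor:UnbalancedTessellationPrimal}), eliminate the masses via the perspective structure ($m_i=\rho(\C_i(w))$ gives value $0$ because $\Floc(1)=0$), collapse the Laguerre cells to Voronoi cells by optimizing $w$ to $0$, and finish with a pointwise Fenchel identity in $\rho$. This route is workable and arguably more transparent about \emph{why} the Voronoi structure and the integrand $-\Floc^\ast(-c(x,x_i))$ emerge. The paper instead argues entirely on the dual side: it observes that $\W(\mu,\nu)\geq\G(0)$ for every choice of masses, that $\G(0)$ is mass-independent because $\Floc^\ast(0)=0$, and that the specific masses \eqref{eqn:quantizationTessellationMasses} make the optimality conditions of \cref{thm:UnbalancedOptimalityConditions} hold at $w=0$, closing the duality gap. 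What the dual route buys is exactly what your own ``secondary subtlety'' flags: \cref{cor:UnbalancedTessellationPrimal} presupposes existence of dual optimizers, which is not guaranteed, so your primary argument is conditional on attainment; your fallback for that case (test $w=0$ in $\G$ for the lower bound, construct $\gamma$ from the optimality conditions for the upper bound) \emph{is} the paper's proof. In other words, your fallback is not a patch for an edge case but the unconditional argument, and the primal route is best viewed as the heuristic motivating it.

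Two smaller points. First, the theorem does not assume $\Floc(0)<\infty$ (only \eqref{eq:QuantizationFlocAssumption}), so your appeals to finiteness of $\Floc(0)$ in the mass-elimination and measurable-selection discussion are misplaced; the case $\rho(\C_i(w))=0$ is handled not by a limit $0\cdot\Floc(0)$ but by the absolute-continuity constraint $\pushforward{\pi_2}{\gamma}\ll\nu$, which forces $\gamma_i=0$ whenever $m_i=0$ and makes that term vanish exactly. Second, for the measurable selection in the final $\rho$-minimization you should cite the argument of \cref{cor:UnbalancedOptimalityConditionsII} (single-valuedness of $\partial\Floc^\ast(-c(\cdot,x_i))$ $\mu$-a.e.), which the paper uses for precisely this purpose in \eqref{eqn:quantizationTessellationMasses}.
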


\begin{example}[Tessellation formulation for unbalanced transport models]
\label{ex:TessellationModels}
The cost functional in \eqref{eqn:quantizationTessellation} and the masses in \eqref{eqn:quantizationTessellationMasses} for the models from \cref{exm:models} are
\begin{align*}
&\text{W2}:&&\begin{cases}
\displaystyle
J=\sum_{i=1}^M\int_{V_i}d(x,x_i)^2\,\d\mu(x)\,,\\
\displaystyle
m_i=\mu(V_i)\,,
\end{cases}\\
&\text{GHK}:&&\begin{cases}
\displaystyle
J=\sum_{i=1}^M\int_{V_i}\left[ 1-e^{-d(x,x_i)^2} \right] \,\d\mu(x)\,,\\
\displaystyle
m_i=\int_{V_i}e^{-d(x,x_i)^2}\,\d\mu(x)\,,
\end{cases}\\
&\text{HK}:&&\begin{cases}
\displaystyle
J=\sum_{i=1}^M\int_{V_i}\sin^2\left(\min\left\{d(x,x_i),\tfrac{\pi}{2}\right\}\right)\,\d\mu(x)\,,\\
\displaystyle
m_i=\int_{V_i}\cos^2\left(\min\left\{d(x,x_i),\tfrac{\pi}{2}\right\}\right)\,\d\mu(x)\,,
\end{cases}\\
&\text{QR}:&&\begin{cases}
\displaystyle
J=\sum_{i=1}^M\int_{V_i\cap B_{\sqrt2}(x_i)}\left[ d(x,x_i)^2-\frac{d(x,x_i)^4}4 \right] \,\d\mu(x)
+\mu(V_i\setminus B_{\sqrt2}(x_i))\,,\\
\displaystyle
m_i=\int_{V_i}\max\left\{1-\frac{d(x,x_i)^2}2,0\right\}\,\d\mu(x).
\end{cases}
\end{align*}
\end{example}
\begin{remark}
An intuitive strategy for proving \cref{thm:quantizationTessellation} could be as follows. One starts from the primal tessellation formulation in \cref{cor:UnbalancedTessellationPrimal} and in addition minimizes over masses $(m_1,\ldots,m_M)$ and positions $(x_1,\ldots,x_M)$. By \eqref{eq:QuantizationFlocAssumption_a} we find that minimizing masses are given by $m_i=\rho(\C_i(w))$.
Next, only the transport term depends on the weights $w$, and since the cost $\c$ is a strictly increasing function of distance, the term is minimized for $w=0$, thus essentially reducing the generalized Laguerre cells $\C_i(w)$ into (truncated) Voronoi cells. Finally, the remaining minimization over $\rho$ can be handled with arguments from convex analysis, similar to those of \cref{thm:UnbalancedOptimalityConditions}, thus arriving at \eqref{eqn:quantizationTessellation}.
We give a shorter proof, using results from the dual tessellation formulation and its optimality conditions.
\end{remark}
\begin{proof}[Proof of \cref{thm:quantizationTessellation}]
First we consider minimization over the masses $(m_i)_i$ for fixed positions $(x_i)_i$.
Let $\nu=\sum_{i=1}^Mm_i \cdot \delta_{x_i}$ be any admissible measure for \eqref{eqn:quantization}.
From \eqref{eq:UnbalancedTessellationDualObjective_b} we find $\W(\mu,\nu) \geq \G(0)$ for any positions $x_1,\ldots,x_M$ and masses $m_1,\ldots,m_M$.
Note that $\G(0)$ does not depend on $m_1,\ldots,m_M$ since we assume $\Floc^\ast(0)=0$, \eqref{eq:QuantizationFlocAssumption_b}. Consider now the case where $\G(0)<\infty$ (and hence $\G$ is finite for all values of $w$, see \Cref{rem:finitenessG}).
We now show $\W(\mu,\nu)=\G(0)$ for a particular choice of $m_1,\ldots,m_M$, which therefore must be optimal (for given locations $x_1,\ldots,x_M$).
We first define $\rho$ via \eqref{eq:UnbalancedOptimalityConditionsDensity} and then $\gamma$ via \eqref{eq:UnbalancedOptimalityConditionsGamma} for $w=0$ ($\rho$ and $\gamma$ are fully determined, see \cref{cor:UnbalancedOptimalityConditionsII}). Furthermore, since $1 \in \partial \Floc^\ast(0)$ by \eqref{eq:QuantizationFlocAssumption_b}, equation \eqref{eq:UnbalancedOptimalityConditionsCells} is satisfied by the choice $m_i=\rho(\C_i(0))$.
(Note that had we chosen $\Floc(\hat z)=0$ instead of $\Floc(1)=0$, one would simply use $m_i=\rho(\C_i(0))/\hat z$ so that \eqref{eqn:quantizationTessellationMasses} would change by the factor $\hat z$.)
By \cref{thm:UnbalancedOptimalityConditions} (optimality conditions), $\gamma$ and $w$ are optimizers of $\E$ and $\G$ for these mass coefficients, which implies that $\W(\mu,\nu)=\G(0)$. Using $\Floc^\ast(0)=0$ from \eqref{eq:QuantizationFlocAssumption_b}, we have
\begin{align*}
	\min_{(m_1,\ldots,m_M)} \W(\mu,\nu) = \G(0) =
	-\sum_{i=1}^M
				\int_{\C_i(0)} \Floc^\ast\big(-c(x,x_i)\big)\,\d\mu(x)
				+ \Floc(0) \cdot \mu(\resid)\,.
\end{align*}
Since $\c(x,y)$ is a strictly increasing function of the distance $d(x,y)$, for $w=0$ we find $\C_i(0) \subset V_i(x_1,\ldots,x_M)$. With the convention $-\Floc^\ast(-\infty)=\Floc(0)$ (cf.~\cref{lem:ConjugateProperties}), the term $\Floc(0) \cdot \mu(\resid)$ becomes $\int_{\resid}-\Floc^\ast(-\phi_0(x))\,\d \mu(x)$, where $\phi_0$ was defined in equation \eqref{eq:PhiW}. Since $\mu \ll \Lebesgue$, integrating over $\resid$ and $\Omega \setminus \resid$ is equivalent to integrating over all Voronoi cells $\{V_i(x_1,\ldots,x_M)\}_{i=1}^M$, and we arrive at
\begin{align}
\label{eq:QuantTessellMassMin}
	\min_{(m_1,\ldots,m_M)} \W(\mu,\nu) = -\sum_{i=1}^M
				\int_{V_i(x_1,\ldots,x_M)} \Floc^\ast\big(-c(x,x_i)\big)\,\d\mu(x)\,,
\end{align}
which establishes equivalence between \eqref{eqn:quantization} and \eqref{eqn:quantizationTessellation}.

Finally, with $m_i=\rho(\C_i(0))$ and $\rho$ given by \eqref{eq:UnbalancedOptimalityConditionsDensity} one obtains \eqref{eqn:quantizationTessellationMasses}, where the integral runs over $\C_i(0)$ instead of $V_i(x_1,\ldots,x_M)$. If the minimum is finite, then either $\mu(\resid)=0$ or $\Floc(0)$ is finite, which implies the convention $(\Floc^\ast)'(-\infty)=0$ (cf.~\cref{lem:ConjugateProperties}\allowbreak\eqref{item:FlocZeroFinite}). In both cases we can extend the area of integration to $V_i(x_1,\ldots,x_M)$ without changing its value.
Equation\,\eqref{eq:UnbalancedOptimalityConditionsGamma} implies that mass is only transported from each Voronoi cell $V_i(x_1,\ldots,x_M) \supset \C_i(0)$ to the corresponding point $x_i$.

If on the other hand $\G(0)=\infty$, then by \eqref{eq:UnbalancedTessellationDualObjective_b} for all $\nu$ concentrated on the positions $(x_i)_i$ one has $W(\mu,\nu)=\infty$. This establishes the equality of the minimal values in both the finite and infinite cases.

Now we consider minimization over the positions $(x_i)_i$. We merely need to consider the case where the minimal value is finite, as otherwise any configuration is optimal. Let $((x_i^k)_{i=1}^M)_{k \in \N}$ be a minimizing sequence of points for the right-hand side of \eqref{eq:QuantTessellMassMin}. After selection of a subsequence and relabeling the order of the points, there will be an integer $N \in \{0,\ldots,M\}$ and points $(x_i)_{i=1}^N$ such that $\lim_k x_i^k=x_i$ for $i \in \{1,\ldots,N\}$ and $\lim_k |x_i^k|=\infty$ for $i>N$.
Let $(\rho^k)_{k \in \N}$ be the sequence induced via \eqref{eq:UnbalancedOptimalityConditionsDensity} from $((x_i^k)_i)_k$ (for $w=0$). Using non-negativity of $c$, convexity of $\Floc^\ast$, and the fact that $1 \in \partial\Floc^\ast(0)$, we get that $\rho^k \leq \mu$ for all $k$ and hence the sequence is tight, allowing extraction of a weak (i.e.~in duality with bounded continuous functions) cluster point $\rho$.
Let $m_i^k=\rho^k(V_i((x_j^k)_j))$ be the corresponding optimal masses, as above, and let $m_i=\rho(V_i((x_j)_{j=1}^N))$ be the masses induced by $\rho$ for $i \in \{1,\ldots,N\}$. Using weak convergence of $\rho^k \to \rho$ and absolute continuity of $\rho$ one finds that $m^k_i \to m_i$ for $i \in \{1,\ldots,N\}$ and $m_i^k \to 0$ for $i>N$. One then has that $\nu^k := \sum_{i=1}^M m_i^k \cdot \delta_{x_i^k}$ converges weakly to $\nu := \sum_{i=1}^N m_i \cdot \delta_{x_i}$. By \cite[Lemma 3.9]{LieroMielkeSavare-HellingerKantorovich-2015a}, $W$ is weakly lower-semicontinuous and therefore the points $(x_i)_{i={1}}^N$ are candidates for a minimizer in \eqref{eq:QuantTessellMassMin}. However, if $N<M$ they are too few. But since adding arbitrary points on the right-hand side of \eqref{eq:QuantTessellMassMin} will not increase the objective, and thus any such extension must yield a minimizer.
\end{proof}

\subsection{A numerical method: Lloyd's algorithm and quasi-Newton variant}
\label{sec:QuantizationNumerics}
Formulation \eqref{eqn:quantizationTessellation} has the advantage over \eqref{eqn:quantization} that it does not contain an inner minimization to find the optimal transport coupling.
Thus we aim to solve \eqref{eqn:quantizationTessellation} numerically. To this end we compute the gradient $\partial_{x_j} J$ (see also analogous derivatives for similar functionals as for instance in \cite{BourneRoper15}).

\begin{lemma}[Derivative of the cost functional $J$]
\label{lem:gradJ}
Let $\mu\in\measp(\Omega)$ be absolutely continuous, and let \eqref{enm:quantIntMonotone} and \eqref{enm:quantIntLipschitz} hold.
Then for $j=1,\ldots,M$,
\[
\partial_{x_j}J(x_1,\ldots,x_M)
=\int_{V_j(x_1,\ldots,x_M)} r(d(x,x_j))(x_j-x)\,\d\mu(x)
\]
where
\[
r(s)=\frac{[-\Floc^\ast\circ(-\cRad)]'(s)}s
\]
(note that $-\Floc^\ast\circ(-\cRad)$ is differentiable for almost every $s\in[0,\diam(\Omega)]$ so that $r$ and $r(d(\cdot,x_j))$ are well-defined almost everywhere).
\end{lemma}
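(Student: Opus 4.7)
The approach is to rewrite $J$ as a single integral over $\Omega$ and then invoke \cref{thm:derivativeIntegral}. Setting $G \eqdef -\Floc^\ast \circ (-\cRad) : [0,\mathrm{diam}(\Omega)] \to \R$, $G$ is Lipschitz by hypothesis, and from \cref{lem:ConjugateProperties}\eqref{item:FlocAstIncreasing} combined with the strict monotonicity of $\cRad$, $G$ is also (monotonically) increasing. Consequently, for any $x \in V_i(x_1,\ldots,x_M)$ one has $G(d(x,x_i)) = \min_{j} G(d(x,x_j))$, and since distinct Voronoi cells overlap only on $(d-1)$-dimensional perpendicular bisectors, which are $\mu$-negligible (as $\mu \ll \Lebesgue$), we may rewrite
\[
J(x_1,\ldots,x_M) = \int_\Omega f(x,w) \, \d\mu(x), \qquad f(x,w) \eqdef \min_{j=1,\ldots,M} G(d(x,y_j)),
\]
evaluated at $w = (y_1,\ldots,y_M) = (x_1,\ldots,x_M)$.

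Next I would verify the hypotheses of \cref{thm:derivativeIntegral} for this $f$. Lipschitz dependence of $f(x,\cdot)$ on $w$ is immediate, since each $y_j \mapsto G(d(x,y_j))$ is Lipschitz with a constant independent of $x$ (equal to the Lipschitz constant of $G$), and the minimum of uniformly Lipschitz functions is Lipschitz. The pointwise differentiability of $w \mapsto f(x,w)$ at $w = (x_1,\ldots,x_M)$ for $\mu$-a.e.\ $x$ is the main technical point. Two sources of failure must be excluded: first, points $x$ at which the minimizing index is not unique lie on the perpendicular bisectors $\{x \in \Omega : d(x,x_i) = d(x,x_j)\}$ for $i \neq j$, each of dimension $d-1$ and thus Lebesgue-negligible; second, points $x$ at which $G$ fails to be differentiable at $d(x,x_j)$ for some $j$ form the set $\{x \in \Omega : d(x,x_j) \in N\}$, where $N \subset [0,\mathrm{diam}(\Omega)]$ is the Lebesgue-null set of non-differentiability of the Lipschitz function $G$. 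For the latter, the coarea formula applied to $x \mapsto d(x,x_j)$ (in the spirit of the proof of \cref{lem:CellNullOverlap}) yields
\[
\big|\{x \in \Omega : d(x,x_j) \in N\}\big| = \int_N \hd^{d-1}\big(\{x \in \Omega : d(x,x_j) = r\}\big)\,\d r = 0.
\]

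Off these negligible sets, if $x$ lies in the interior of $V_j(x_1,\ldots,x_M)$, then $f(x,w) = G(d(x,y_j))$ holds in an open neighborhood of $w$, so the chain rule gives $\partial_{y_j} f(x,w) = G'(d(x,x_j)) \cdot \frac{x_j - x}{d(x,x_j)}$ while $\partial_{y_i} f(x,w) = 0$ for $i \neq j$. Substituting this into the conclusion of \cref{thm:derivativeIntegral} and recognizing $r(s) = G'(s)/s$ delivers the claimed formula. The principal obstacle, as noted, is the $\mu$-a.e.\ differentiability of $f(x,\cdot)$, which hinges on the coarea-type estimate above; everything else is the standard envelope-style differentiation of Voronoi energies, in which potential boundary-motion contributions are automatically absent because neighboring cells agree on their common boundary.
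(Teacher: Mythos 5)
Your proposal is correct and follows essentially the same route as the paper: rewrite $J$ as $\int_\Omega \min_j G(d(x,y_j))\,\d\mu(x)$ using the monotonicity of $G=-\Floc^\ast\circ(-\cRad)$, check Lipschitz dependence on the point positions and $\mu$-a.e.\ differentiability, and conclude via \cref{thm:derivativeIntegral} and the chain rule. The only difference is that you spell out, via the coarea formula, why the preimage under $d(\cdot,x_j)$ of the null set where $G$ fails to be differentiable is Lebesgue-negligible -- a detail the paper's proof asserts without elaboration -- so your write-up is, if anything, slightly more complete.
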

\begin{example}[Cost derivative for unbalanced transport models]
For the models from \cref{exm:models} one can readily check
\begin{align*}
&\text{W2}:&&
r(s)=2\,,\\
&\text{GHK}:&&
r(s)=2e^{-s^2}\,,\\
&\text{HK}:&&
r(s)=\sin(2s)/s\text{ if }s\leq\tfrac\pi2\text{ and }0\text{ otherwise,}\\
&\text{QR}:&&
r(s)=\max\{2-s^2,0\}\,.
\end{align*}
Note that W2 only satisfies assumption \eqref{enm:quantIntLipschitz} on bounded domains.
\end{example}
\begin{proof}[Proof of \Cref{lem:gradJ}]
Note that $J(x_1,\ldots,x_M)=\int_\Omega f(x,(x_1,\ldots,x_M))\,\d\mu(x)$ with
\begin{equation*}
f(x,(x_1,\ldots,x_M))=\min\{-\Floc^\ast(-\cRad(d(x,x_i)))\,|\,i=1,\ldots,M\}\,
\end{equation*}
since $-\Floc^\ast\circ(-\ell)$ is nondecreasing by \eqref{enm:quantIntMonotone}.
By the Lipschitz assumption \eqref{enm:quantIntLipschitz} on $\Floc^\ast\circ(-\cRad)$, $f$ is Lipschitz in its second argument.
Furthermore, $\Floc^\ast\circ(-\cRad)$ is differentiable almost everywhere, and $d(x,x_i)$ is differentiable in its second argument for all $x\neq x_i$.
Therefore, $f$ is differentiable in its second argument at $(x_1,\ldots,x_M)$ for almost all $x\in\Omega$ (thus for $\mu$-almost all $x\in\Omega$) with
\begin{equation*}
\partial_{x_j}f(x,(x_1,\ldots,x_M))=\begin{cases}
r(d(x,x_j))(x_j-x)&\text{if }x\in V_j(x_1,\ldots,x_M),\\0&\text{otherwise.}
\end{cases}
\end{equation*}
\Cref{thm:derivativeIntegral} now implies the desired result.
\end{proof}

To find a minimizer of $J$ and thus a solution to the optimality condition $\partial_{x_j}J=0$ for $j=1,\ldots,M$, one can perform the following fixed point iteration associated with the optimality conditions,
\[
x_i^{(k+1)} =
\frac{\displaystyle \int_{V_i(x_1^{(k)},\ldots,x_M^{(k)})} xr(d(x_i^{(k)},x)) \, \d\mu(x)}
{\displaystyle \int_{V_i(x_1^{(k)},\ldots,x_M^{(k)})} r(d(x_i^{(k)},x)) \, \d\mu(x)}\,,
\quad i=1,\ldots,M,
\]
starting from some initialization $x_1^{(0)},\ldots,x_M^{(0)}\in\Omega$.
This iteration is well-defined as long as the denominator is nonzero, for instance if $\mu$ is strictly positive on $\Omega$ (recall that $-\Floc^\ast(-\cRad(s))$ is strictly increasing for small $s$ by \eqref{enm:quantIntMonotone}).
This is a generalisation of Lloyd's algorithm for computing Centroidal Voronoi Tessellations \cite{DuFaberGunzburger99}, which are critical points of the function
\[
\tilde J(x_1,\ldots,x_M) = \sum_{i=1}^M \int_{V_i(x_1,\ldots,x_M)} |x-x_i|^2 \, \d\mu(x)\,.
\]
Its convergence has been proven in a number of settings \cite{SabinGray86,EmelianenkoJuRand08,BourneRoper15} which also cover many possible choices for our $\mu$, $\c$, and $\Floc$.
Since the algorithm is based solely on the first variation one can expect linear convergence.
To achieve faster convergence one may use a quasi-Newton method for the minimization of $J$ instead, which seems particularly well-suited since the optimization is performed over a finite-dimensional space.

Our numerical implementation is performed in Matlab.
The integrals over a Voronoi cell $V_i(x_1,\ldots,x_M)$ are evaluated using Gaussian quadrature on the triangulation which is obtained by connecting each vertex of $V_i(x_1,\ldots,x_M)$ with $x_i$.
The Voronoi cells themselves are computed using the built-in function \texttt{voronoin}.
\Cref{fig:quantizationEnergyDecrease} shows a slightly faster convergence of the BFGS method compared to Lloyd's algorithm,
while \cref{fig:quantizationModelComparison} shows quantization results for the same models as in \cref{fig:SemiDisc_model_comparison}, resulting in different point distributions.
Similarly, \cref{fig:quantizationL} shows quantization results for the same input marginal $\mu$ and the Hellinger--Kantorovich model, but for varying length scales.

\begin{figure}[H]
\centering
\includegraphics[]{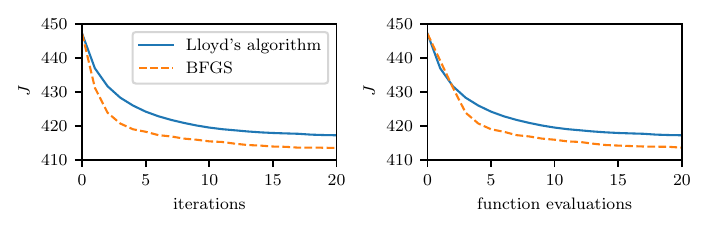}
\hfill
\includegraphics[]{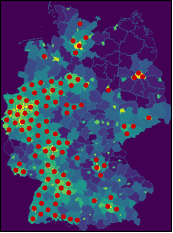}
\hfill
\caption[]{Quantization energy decrease of Lloyd's algorithm and a BFGS method versus number of iterations (\textit{left}) and function evaluations (\textit{centre}; for the BFGS method function evaluations differ from iterations due to additional evaluations in the stepsize control) for the example shown on the right. \textit{Right:} Input density $\mu$ and optimal locations $(x_1,\ldots,x_M)$ for $M=100$, where $\mu$ is population density in Germany 2015 (published by the Federal Statistical Office of Germany in the ``Regional Atlas'' \url{http://www.destatis.de/regionalatlas}).
The computations use the Hellinger--Kantorovich model.}
\label{fig:quantizationEnergyDecrease}
\end{figure}

\begin{figure}[H]
	\centering
	\includegraphics{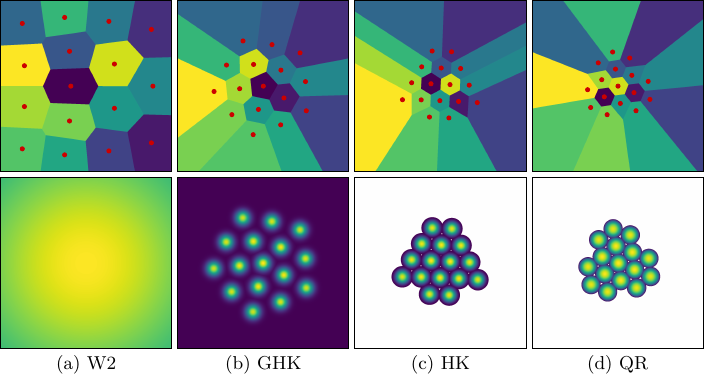}
	\caption{Quantization results for $\mu=(1+\exp(-\frac{|x|^2}{2(4\pi)^2})) \cdot \Lebesgue \restr\Omega$ and $\Omega=[-4\pi,4\pi]^2$, $M=16$ on the same models as in \cref{fig:SemiDisc_model_comparison}.
	\textit{Top row:} optimal locations $x_1,\ldots,x_M$ and Voronoi cells $\{V_i(x_1,\ldots,x_M)\}_{i=1}^M$. %
	\textit{Bottom row:} optimal marginal $\rho=\pushforward{\pi_1}{\gamma}$ (identical colour scale in all figures; regions with $\RadNik{\rho}{\mu}(x)=0$ are white for emphasis). For (a) we have $\rho=\mu$.
	}
	\label{fig:quantizationModelComparison}
\end{figure}

\begin{figure}[H]
	\centering
	\includegraphics{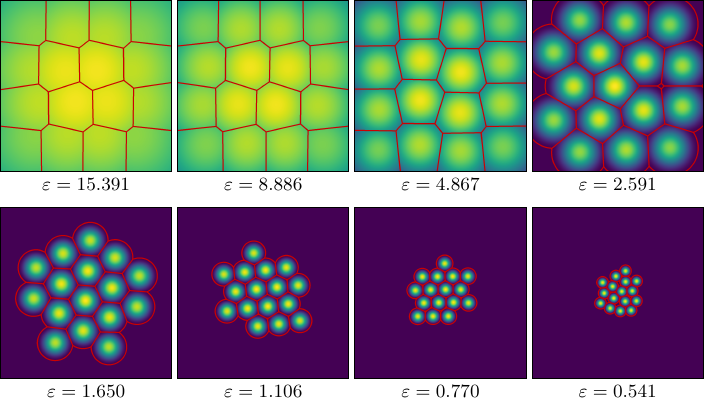}
	\caption{Quantization results for the Hellinger--Kantorovich model and different length scales, showing the optimal Laguerre cells $\C_i(0)$ (which coincide with the optimal Voronoi cells up to the set $\resid$ from \eqref{eq:Resid})
 and the optimal marginals $\rho=\pushforward{\pi_1}{\gamma}$ (same domain and $\mu$ as in \cref{fig:quantizationModelComparison}; identical colour scale in all figures).
	}
	\label{fig:quantizationL}
\end{figure}

\begin{figure}[H]
	\centering
	\includegraphics{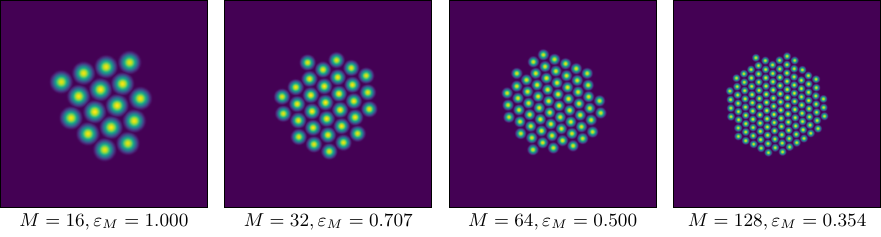}
	\caption{Quantization results for the Hellinger--Kantorovich model using different length scales and numbers of discrete points, with constant total point density $\varepsilon_M^2  M$.
	The optimal marginals $\rho=\pushforward{\pi_1}{\gamma}$ are shown (same domain and $\mu$ as in \cref{fig:quantizationModelComparison}; identical colour scale in all figures).
	}
	\label{fig:quantizationML}
\end{figure}

\subsection{Crystallization in two dimensions}
\label{sec:Crystallization}

In this section we consider the asymptotic behaviour of the unbalanced quantization problem in the limit of infinitely many points, $M \to \infty$, in two dimensions, $\Omega \subset \R^2$, in which case crystallization results from discrete geometry are available.

To simplify the exposition in this section we assume \eqref{enm:quantIntFinite} so that the unbalanced transport cost is always finite.
Additionally we assume \eqref{enm:quantIntPositive}, which simply ensures that the quantization problem is not trivially degenerate.
The situation without these conditions can in principle be treated similarly, but requires a number of technical case distinctions (such as whether the domain of $(-\Floc^\ast\circ(-\cRad))$ is open or closed).

As we increase $M$, the average distance between points of $\Omega$ and their nearest discrete point $x_i$ decreases so that the (balanced) transport cost from $\mu$ onto $\nu$ vanishes in the limit, whereas the cost for changing mass remains unchanged. Therefore, in the limit $M \to \infty$ the interplay of transport and mass change in \emph{unbalanced} transport would not be visible. To avoid this, we will rescale the metric on the domain $\Omega$ as $M$ grows and study the resulting different regimes, depending on the scaling.
Consequently, in this section we consider the scaled cost
\begin{equation}
\label{eq:QuantizationScaledCost}
J_{\varepsilon}^M(x_1,\ldots,x_M)
=\sum_{i=1}^M\int_{V_i(x_1,\ldots,x_M)}-\Floc^\ast\left(-\cRad\left(\tfrac{d(x,x_i)}{\varepsilon}\right)\right)\,\d\mu(x)
\end{equation}
for $M \in \N$, $\varepsilon \in (0,\infty)$.

We first prove a lower bound on the quantization cost $J_{\varepsilon}^M$ for the Lebesgue measure, which corresponds to a perfect triangular lattice. Then a corresponding upper bound is derived.
Finally, for $\mu$ with Lipschitz continuous Lebesgue density, we show that these two bounds imply that asymptotically a locally regular triangular lattice becomes an optimal quantization configuration, where the local density of points depends on the density of $\mu$.

\begin{theorem}[Lower bound for quantization of the Lebesgue measure]\label{thm:lowerBound}
Let $\Omega \subset \R^2$ be a convex polygon with at most six sides, let $\mu$ be the Lebesgue measure on $\Omega$, and let \eqref{enm:quantIntMonotone} hold.
A lower bound on \eqref{eq:QuantizationScaledCost} is given by
\begin{align} \label{eq:QuantizationLowerBound}
\min_{x_1,\ldots,x_M\in \mathbb{R}^2} J_{\varepsilon}^M(x_1,\ldots,x_M)
\geq M \int_{\Hex(|\Omega|/M)}-\Floc^\ast\left(-\cRad\left(\tfrac{d(x,0)}{\varepsilon}\right)\right) \, \d x\,,
\end{align}
where $\Hex(|\Omega|/M)$ is a regular hexagon of area $|\Omega|/M=\Lebesgue(\Omega)/M$ centred at the origin $0$.
\end{theorem}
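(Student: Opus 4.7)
The plan is to recognise the integrand as a non-decreasing, non-negative radial function of the distance to the nearest generator and then invoke the classical hexagonal crystallisation inequality of L.~Fejes T\'oth, which is valid precisely for convex domains with at most six sides.

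Concretely, I introduce $g:[0,\infty)\to[0,\infty)$ by $g(r)\eqdef -\Floc^\ast(-\cRad(r/\varepsilon))$, so that
\[
J^M_\varepsilon(x_1,\ldots,x_M)=\sum_{i=1}^M\int_{V_i(x_1,\ldots,x_M)}g(d(x,x_i))\,\d x\,.
\]
The first task is to verify that $g$ meets the hypotheses required by the crystallisation literature: continuity of $g$ follows from \cref{lem:ConjugateProperties}\allowbreak\eqref{item:FlocAstFinite} combined with continuity of $\cRad$; monotone non-decrease follows from strict monotonicity of $\cRad$ (\cref{def:RadialCost}) and \cref{lem:ConjugateProperties}\allowbreak\eqref{item:FlocAstIncreasing}; non-negativity with $g(0)=0$ uses $\cRad(0)=0$, the identity $\Floc^\ast(0)=0$ from \eqref{eq:QuantizationFlocAssumption_b}, and $\Floc^\ast\leq 0$ on $(-\infty,0]$ from \eqref{eq:QuantizationAsymptoticAssumptions_b}; finally, $g$ is bounded by $\Floc(0)$ thanks to \eqref{eq:QuantizationAsymptoticAssumptions_b} once more.

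The second task is to apply the hexagonal crystallisation inequality: for any convex polygon $\Omega\subset\R^2$ with at most six sides, any $M\in\N$, any generators $x_1,\ldots,x_M\in\Omega$, and any non-decreasing non-negative $g:[0,\infty)\to[0,\infty)$ with $g(0)=0$,
\[
\sum_{i=1}^M\int_{V_i(x_1,\ldots,x_M)}g(d(x,x_i))\,\d x\;\geq\;M\int_{\Hex(|\Omega|/M)}g(d(x,0))\,\d x\,.
\]
This is the 2D hexagonal crystallisation bound from \cite{FejesToth72,GFejesToth01,BollobasStern72,MorganBolton02,Newman1982,Gruber99}. Its proof combines Fejes T\'oth's $n$-gon moment lemma applied cell by cell (each Voronoi cell is convex and contains its generator), an Euler-formula edge count ($\sum_i n_i\leq 6M$ for Voronoi cells in a convex $\leq 6$-gon), and a convexity/Jensen argument that collapses the cell-wise $n_i$-gon bounds to the uniform hexagonal right-hand side. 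Substituting our $g$ into the right-hand side then yields exactly \eqref{eq:QuantizationLowerBound} after taking the minimum over $x_1,\ldots,x_M\in\Omega$.

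The step I expect to require the most care is bibliographic rather than mathematical: the above crystallisation inequality is classically recorded for specific weights such as $g(r)=r^p$, whereas here $g(r)=-\Floc^\ast(-\cRad(r/\varepsilon))$ depends on the chosen model. A clean device to avoid re-proving the inequality for each new $g$ is the layer-cake representation $g(r)=\int_0^\infty\mathbf{1}_{\{g>t\}}(r)\,\d t$, which reduces the inequality to the sublevel-set comparison $\sum_{i=1}^M|V_i(x_1,\ldots,x_M)\cap B_s(x_i)|\leq M\,|\Hex(|\Omega|/M)\cap B_s(0)|$ for all $s\geq 0$ (itself a direct consequence of Fejes T\'oth's moment lemma for indicator weights); Fubini then reassembles the result for the general $g$ that appears here.
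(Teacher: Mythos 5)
Your proposal is correct and follows essentially the same route as the paper: rewrite $J^M_\varepsilon$ as $\int_\Omega \min_i g(d(x,x_i))\,\d x$ for the non-decreasing weight $g(r)=-\Floc^\ast(-\cRad(r/\varepsilon))$ and invoke L.~Fejes T\'oth's Theorem on Sums of Moments. The extra layer-cake device is harmless but unnecessary, since the version in \cite{Gruber99} already covers arbitrary non-decreasing weights.
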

\begin{remark}[Cost of the triangular lattice]\label{rem:costTriangularLattice}
Comparing with \cref{thm:quantizationTessellation},
the lower bound is exactly the unbalanced transportation cost $\W(\mu,\nu)$ from a regular triangular lattice $\nu$ of $M$ Dirac measures of mass
\[
m = \int_{\Hex(|\Omega|/M)} \partial\Floc^\ast\left(-\cRad\left(\tfrac{d(x,0)}{\varepsilon}\right)\right)\, \d x\,,
\]
whose Voronoi cells are translations of $\Hex(|\Omega|/M)$,
onto $\mu$ the Lebesgue measure on the union of these Voronoi cells.
\end{remark}
\begin{proof}[Proof of \cref{thm:lowerBound}]
Since $-\Floc^\ast(-\cRad(\cdot/\varepsilon))$ is increasing by \eqref{enm:quantIntMonotone}, for $x_1,\ldots,x_M\in\Omega$ we have
\begin{align*}
	J_\varepsilon^M(x_1,\ldots,x_M)
	& = \sum_{i=1}^M \int_{V_i(x_1,\ldots,x_M)}
	-\Floc^\ast\left(-\cRad\left(\tfrac{d(x,x_i)}{\varepsilon}\right)\right)\,\d x \\
	& =\int_\Omega\min_{i=1,\ldots,M}-\Floc^\ast
		\left(-\cRad\left(\tfrac{d(x,x_i)}{\varepsilon}\right)\right)
	\,\d x\,,
\end{align*}
and the result follows immediately from L.~Fejes T\'oth's Theorem on Sums of Moments \cite{Gruber99} (see also \cite{FejesToth72,MorganBolton02}).
\end{proof}

\begin{remark}[Degeneracy of minimizers]\label{rem:degeneracyMinimizers}
As opposed to the quantization problem for classical optimal transport,
the set of minimizers in the unbalanced transport case can exhibit strong degeneracies.
As an example, consider the case of Hellinger--Kantorovich transport with $M\ll4\,|\Omega|/(\pi^3 \varepsilon^2)$.
Let $x_1,\ldots,x_M$ be any arrangement of the point masses such that the balls $B_{\varepsilon\,\pi/2}(x_i)$ are pairwise disjoint and included in $\Omega$ (which necessarily implies $M\leq4\,|\Omega|/(\pi^3 \varepsilon^2)$).
Then $(x_1,\ldots,x_M)$ achieves the lower bound since
\begin{align*}
J_\varepsilon^M(x_1,\ldots,x_M)
&= \sum_{i=1}^M \int_{V_i(x_1,\ldots,x_M)}
-\Floc^\ast\left(-\cRad\left(\tfrac{d(x,x_i)}{\varepsilon}\right)\right)\,\d x \\
&=\sum_{i=1}^M \int_{V_i(x_1,\ldots,x_M)}\sin^2\left(\min\left\{d(x,x_i)/\varepsilon,\tfrac{\pi}{2}\right\}\right)\,\d x\\
&=|\Omega|-M\varepsilon^2\,\pi^3/4+\sum_{i=1}^M\int_{B_{\varepsilon\,\pi/2}(x_i)}\sin^2\left(d(x,x_i)/\varepsilon\right)\,\d x\\
&=M\int_{\Hex(|\Omega|/M)}-\Floc^\ast(-\cRad(d(x,0)/\varepsilon))\,\d x\,,
\end{align*}
where we used $\Hex(|\Omega|/M) \supset B_{\varepsilon\,\pi/2}(0)$.
Indeed, the energy does not discriminate between different solutions, because $-\Floc^\ast\circ(-\cRad)$ is constant for distances larger than $\frac{\epsilon\pi}2$.
\end{remark}

\begin{theorem}[Upper bound for quantization of the Lebesgue measure]\label{thm:upperBound}
Let $\Omega \subset \R^2$ be a square domain, let $\mu$ be the Lebesgue measure, and let \eqref{enm:quantIntMonotone} hold.
Let $x_1,\ldots,x_M$ be a regular triangular arrangement of points in the following sense:
Let $G \subset \R^2$ be a regular triangular lattice with lattice spacing $\sqrt{\frac{2\,|\Omega|}{\sqrt{3}\,M}}$, such that the corresponding Voronoi cells are regular hexagons with area $|\Omega|/M$ and side length $L=\sqrt{\frac{2\,|\Omega|}{3\sqrt{3} M}}$.
Let $\{x_1,\ldots,x_{\hat{M}}\} \subset G$ be those points for which the corresponding hexagon $\Hex_i$ is fully contained in $\Omega$.
Assume that $M$ is sufficiently large so that $\hat{M} \ge 1$.
If $\hat{M}<M$, pick $\{x_{\hat{M}+1},\ldots,x_M\}$ arbitrarily from $\Omega$.
Then
\begin{equation}
\label{eq:QuantizationUpperBound}
J_\varepsilon^M(x_1,\ldots,x_M)\leq M\!\!\int_{\Hex(\frac{|\Omega|}M)}\!\!-\Floc^\ast\left(-\cRad\left(\tfrac{d(x,0)}{\varepsilon}\right)\right) \d x
- |\partial\Omega|\sqrt{\tfrac{8|\Omega|}{3\sqrt3M}}\,\Floc^\ast\!\!\left(-\cRad\left(
\sqrt{\tfrac{C_Q\,|\Omega|}{\varepsilon^2M}}
\right)\right),
\end{equation}
where $C_Q = 2(2 \sqrt{2} + 1)^2 / (3 \sqrt{3})$, $|\partial \Omega|$ denotes the one-dimensional Hausdorff measure of $\partial \Omega$ and $|\Omega|=\Lebesgue(\Omega)$.
\end{theorem}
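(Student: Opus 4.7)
The plan is to upper bound $J_\varepsilon^M$ by comparing it cellwise with the ideal hexagonal configuration and by controlling the boundary defect. I will rewrite the cost as a single minimum integral,
\begin{equation*}
J_\varepsilon^M(x_1,\ldots,x_M)=\int_\Omega \min_{j=1,\ldots,M}\Bigl[-\Floc^\ast\bigl(-\cRad(d(x,x_j)/\varepsilon)\bigr)\Bigr]\,\d x,
\end{equation*}
and then split the domain of integration as $\Omega=\bigl(\bigcup_{i=1}^{\hat M}\Hex_i\bigr)\cup S$, where $S\eqdef\Omega\setminus\bigcup_{i=1}^{\hat M}\Hex_i$ is the boundary defect region. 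The presence of the auxiliary points $x_{\hat M+1},\ldots,x_M$ is harmless because they can only shrink Voronoi cells and thus can only decrease $J_\varepsilon^M$ via the pointwise minimum.

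On each hexagon $\Hex_i$ with $i\leq \hat M$ I would bound the minimum from above by the single term at index $i$; by translation invariance (each $\Hex_i$ is a translate of $\Hex(|\Omega|/M)$ centred at $x_i$) this contributes
\begin{equation*}
\sum_{i=1}^{\hat M}\int_{\Hex_i}-\Floc^\ast\bigl(-\cRad(d(x,x_i)/\varepsilon)\bigr)\,\d x
=\hat M\int_{\Hex(|\Omega|/M)}-\Floc^\ast\bigl(-\cRad(d(x,0)/\varepsilon)\bigr)\,\d x.
\end{equation*}
Since \cref{lem:ConjugateProperties}\eqref{item:FlocAstNegNeg} (using $\Floc^\ast(0)=0$ from \eqref{eq:QuantizationFlocAssumption_b}) together with monotonicity of $\Floc^\ast\circ(-\cRad)$ ensures that the integrand is nonnegative, I can replace $\hat M$ by the larger $M$, producing the leading term of \eqref{eq:QuantizationUpperBound}. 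On the defect region $S$ I would use the bound $-\Floc^\ast(-z)\leq\Floc(0)$, valid for all $z\in\R$ by \eqref{eq:QuantizationAsymptoticAssumptions_b}, to conclude that the contribution of $S$ is at most $\Floc(0)\cdot|S|$.

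The remaining task, and the main technical step, is to prove $|S|\leq |\partial\Omega|\sqrt{8|\Omega|/(3\sqrt{3}M)}=2L\cdot|\partial\Omega|$. I would argue that $S$ lies in the closed $2L$-neighbourhood of $\partial\Omega$ as follows: any $x\in S$ belongs to some hexagon $\Hex_y$ of the full lattice tiling with $y\in G$ and $\Hex_y\not\subset\Omega$ (either because $y\notin\Omega$, or because $y\in\Omega$ but the hexagon pokes out). In either case $\Hex_y$ meets $\partial\Omega$, and since the circumradius of $\Hex_y$ equals $L$, its centre satisfies $d(y,\partial\Omega)\leq L$; combined with $d(x,y)\leq L$ this yields $d(x,\partial\Omega)\leq 2L$. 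Thus $S\subset\{x\in\Omega:d(x,\partial\Omega)\leq 2L\}$, and by the standard convex-geometry estimate for the boundary strip of a convex body in $\R^2$ (which follows, e.g., from a projection argument onto $\partial\Omega$ or from Steiner's formula), $|\{x\in\Omega:d(x,\partial\Omega)\leq r\}|\leq r\cdot|\partial\Omega|$, giving $|S|\leq 2L|\partial\Omega|$.

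The main obstacle I anticipate is the geometric bookkeeping in the last paragraph: one has to be careful that every $x\in S$ really is covered by a hexagon whose centre lies within $L$ of $\partial\Omega$ (treating symmetrically the border hexagons with centres inside $\Omega$ and those with centres outside $\Omega$ but still intersecting $\Omega$), and to invoke the correct boundary-strip inequality for convex domains. The rest of the argument is a direct concatenation of pointwise inequalities and should require no further assumptions beyond those already imposed at the beginning of \Cref{sec:Crystallization}.
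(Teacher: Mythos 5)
Your proposal is correct and follows essentially the same route as the paper's proof: bound the contribution of each fully contained hexagon by the single term $-\Floc^\ast(-\cRad(d(x,x_i)/\varepsilon))$ (using translation invariance and nonnegativity of the integrand to pass from $\hat M$ to $M$), bound the defect region $S$ by $\Floc(0)\cdot|S|$, and estimate $|S|$ via its inclusion in the $2L$-neighbourhood of $\partial\Omega$ together with the convex boundary-strip inequality. Your explicit handling of the auxiliary points and of why every $x\in S$ lies within $2L$ of $\partial\Omega$ just fills in details the paper leaves implicit.
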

\begin{proof}
Let $S = \Omega \setminus \bigcup_{i=1}^{\hat{M}} \Hex_i$ be those points in $\Omega$ that are not covered by any hexagon $\Hex_i$. Note that all $x\in S$ lie no further away from $\partial\Omega$ than the diameter of a hexagon, $2L=\sqrt{\frac{8\,|\Omega|}{3\sqrt3M}}$.
Since $\Omega$ is convex we thus have $|S|\leq | \Omega \cap \bigcup_{x \in \partial \Omega} B_{2L}(x)| \leq 2\,L\,|\partial\Omega|$.
Likewise, any point $x\in S$ lies no further away from the union of all hexagons $H_i$ than $2 \sqrt{2}L$
and thereby no further away from $\{x_1,\ldots,x_M\}$ than $(2 \sqrt{2}+1)L$,
thus 
$\min_{i}d(x,x_i)\leq
(2 \sqrt{2}+1) L$.

Note that $V_i(x_1,\ldots,x_M) \setminus S \subseteq \Hex_i$ for $i=1,\ldots,\hat{M}$
and that $-\Floc^\ast\circ(-\cRad)$ is monotonously increasing so that we find
\begin{align*}
	J_\varepsilon^M(x_1,\ldots,x_M) & \leq \sum_{i=1}^{\hat{M}} \int_{\Hex_i} -\Floc^\ast\left(-\cRad\left(\tfrac{d(x,x_i)}{\varepsilon}\right)\right)\,\d x 
	+\int_S-\Floc^\ast\left(-\cRad\left(\tfrac{\min_id(x,x_i)}{\varepsilon}\right)\right)\,\d x \\
	& \leq M \cdot \int_{\Hex(|\Omega|/M)} -\Floc^\ast\left(-\cRad\left(\tfrac{d(x,0)}{\varepsilon}\right)\right)\,\d x
	-|S|\Floc^\ast\left(-\cRad\left(\tfrac{
(2 \sqrt{2}+1)
    L}{\varepsilon}\right)\right).
\end{align*}
Substituting the value of $L$ and the above bound for $|S|$ proves the claim.
\end{proof}

\begin{remark}[A priori estimate] \label{rem:APrioriEstimate}
From \eqref{enm:quantIntMonotone}
we also have the estimate
\begin{equation*}
\min J_\varepsilon^M
\leq\int_\Omega-\Floc^\ast(-\cRad(\diam(\Omega)/\varepsilon))\,\d\mu
\leq \mu(\Omega) \cdot (-\Floc^\ast(-\cRad(\diam(\Omega)/\varepsilon))),
\end{equation*}
whose right-hand side may be further bounded by the potentially infinite $\mu(\Omega)\Floc(0)=\W(\mu,0)$
(the latter bound is directly obtained by choosing $\nu=0$ as a quantization candidate in \eqref{eqn:quantization}).
\end{remark}

Let now $(\varepsilon_M)_{M \in \N}$ be a positive, decreasing sequence of scaling factors.
We use \cref{thm:lowerBound,thm:upperBound} to study the asymptotic quantization behaviour of the sequence of functionals $(J_{\varepsilon_M}^M)_M$ as $M \to \infty$ for a non-uniform mass distribution $\mu$ with Lipschitz Lebesgue density $m$.
We identify three different regimes, depending on the behaviour of the sequence $\varepsilon_M^2\,M$ (the quantity $\varepsilon_M^2\,M$ indicates something like the average point density).
A corresponding numerical illustration for the case of constant average point density is provided in \cref{fig:quantizationML}.

\begin{theorem}[Asymptotic quantization]\label{thm:asymptoticQuantization}
Let $\Omega\subset\R^2$ be a closed Lipschitz domain (a domain whose boundary is locally the graph of a Lipschitz function with the domain lying on one side) and $\mu=m \cdot (\Lebesgue \restr\Omega)$ for $\Lebesgue$ the Lebesgue measure and $m:\Omega\to[0,\infty)$ a Lipschitz-continuous density.
Let \eqref{enm:quantIntMonotone}-\eqref{enm:quantIntPositive} hold.
For any sequence $\varepsilon_1,\varepsilon_2,\ldots>0$ with $\varepsilon_M\searrow0$ as $M\to\infty$ the following holds:
\begin{enumerate}
\item\label{enm:manyMasses}
If $\displaystyle\lim_{M\to\infty}\!\!\varepsilon_M^2M\!=\!\infty$, then $\displaystyle\lim_{M\to\infty}\!\!\min J_{\varepsilon_M}^M=\!-\!\Floc^\ast(-\cRad(0))\!\cdot\!\mu(\Omega)$, which under \eqref{enm:quantIntZero} equals $0$.
\item\label{enm:fewMasses}
If $\displaystyle\lim_{M\to\infty}\!\!\varepsilon_M^2M\!=\!0$, then $\displaystyle\lim_{M\to\infty}\!\!\min J_{\varepsilon_M}^M=\mu(\Omega)\lim_{s\to\infty}-\Floc^\ast(-\cRad(s))$, which under \eqref{enm:quantIntLimit} equals $\mu(\Omega)\Floc(0)=\W(\mu,0)$.
\item\label{enm:normalMasses}
If $\displaystyle\lim_{M\to\infty}\!\!\varepsilon_M^2M\!=\!P\in(0,\infty)$, then
$$\displaystyle\lim_{M\to\infty}\!\!\min J_{\varepsilon_M}^M
=\left[\kappa\mapsto\int_\Omega \EDens^\ast(\kappa/m(x))\,\d\mu(x)\right]^\ast(P),
$$
where $\EDens:(-\infty,\infty)\to(0,\infty]$,
\begin{equation*}
\EDens(z)=z \cdot \int_{\Hex(1/z)}-\Floc^\ast(-\cRad(d(x,0)))\,\d x\, \quad \tn{for } z>0,
\end{equation*}
$\EDens(0)=\lim_{s\to\infty}-\Floc^\ast(-\cRad(s))$, and $\EDens(z)=\infty$ for $z<0$.
Furthermore, there exists a unique constant $\lambda<0$ and some measurable function $D:\Omega\to[0,\infty)$ such that
$$
\displaystyle\lim_{M\to\infty}\min J_{\varepsilon_M}^M
=\int_\Omega \EDens(D(x))\,\d\mu(x),
$$
and
\begin{equation}\label{eqn:optCond}
D(x)\in\partial \EDens^\ast(\lambda/m(x))
\text{ for almost all }x\in\Omega,
\qquad
P=\int_\Omega D(x)\,\d x
\end{equation}
(by convention, for $m(x)=0$ we set $D(x)=0$). That is, $D$ can be interpreted as (being proportional to) the spatially varying point density of the asymptotically optimal local triangular grid.
\end{enumerate}
\end{theorem}

\begin{remark}[Limit cases]
\Cref{thm:asymptoticQuantization}\allowbreak\eqref{enm:manyMasses} and \eqref{enm:fewMasses} can in fact be recovered as the special cases $P=\infty$ and $P=0$ of \cref{thm:asymptoticQuantization}\allowbreak\eqref{enm:normalMasses}
if we set $(\lambda,D)\equiv(0,\infty)$ or $(\lambda,D)\equiv(-\infty,0)$, respectively.
However, it is simpler to treat them separately.
\end{remark}

Before stating a few more remarks and proving the asymptotic result we analyse the function $B$, which represents the cell problem of quantizing a hexagon by a single Dirac mass.
\begin{lemma}[Properties of the cell problem]\label{lem:cellProblem}
Assume \eqref{enm:quantIntMonotone}-\eqref{enm:quantIntPositive}.
On $(0,\infty)$, the function $\EDens$ from \cref{thm:asymptoticQuantization} is finite, positive, decreasing, and convex with continuous derivative
\begin{equation*}
\EDens'(z)=\frac1z\left[\EDens(z)-\tfrac{1}{|\partial \Hex(1/z)|} \int_{\partial \Hex(1/z)}-\Floc^\ast(-\cRad(d(x,0)))\,\d x\right]\, =:G(z).
\end{equation*}
Further, $\EDens(z) \to \EDens(0)$ as $z \searrow 0$, $G(z)\to0$ as $z\to\infty$,
and there exists some $Z\geq0$ such that $G$ is constant on $(0,Z]$ and strictly increasing on $(Z,\infty)$,
where $Z>0$ if $-\Floc\circ(-\cRad)$ achieves a maximum.
With $r = \lim_{z \searrow Z} B'(z)\in[-\infty,0)$ we can summarize $r< G(z) < 0$ for $z >Z$ and
\[
\partial \EDens(z) =
\begin{cases}
\emptyset & \tn{ for } z <0,
\\
(-\infty,r] & \tn{ for } z=0,
\\
\{ r \} & \tn{ for } z \in (0,Z],
\\
\{ G(z) \} & \tn{ for } z > Z,
\end{cases}
\qquad
\partial (\EDens^\ast)(s) =
\begin{cases}
\{ 0 \} & \tn{ for } s < r,
\\
[0,Z] & \tn{ for } s=r,
\\
\{ G^{-1}(s) \} & \tn{ for } s \in (r,0),
\\
\emptyset & \tn{ for } s \geq 0.
\end{cases}
\]
\end{lemma}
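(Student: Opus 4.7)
The plan is to reduce everything to the unit hexagon via the change of variables $x = z^{-1/2}y$. Writing $H(1/z) = z^{-1/2}H_0$ with $H_0 := H(1)$ of unit area, one obtains
$$B(z) = \int_{H_0} g(z^{-1/2}|y|)\,dy \quad\text{for } z > 0, \qquad g(s) := -\Floc^\ast(-\cRad(s)).$$
The assumption $\Floc(1)=0$ (which forces $\Floc^\ast(0)=0$) together with the lemma's hypothesis $\cRad(\infty)=\infty$ and \cref{lem:ConjugateProperties}(\ref{item:FlocAstFinite}),(\ref{item:FlocZeroFinite}) make $g$ continuous, nondecreasing, $g(0)=0$, and $g(s)\to\Floc(0)$ as $s\to\infty$. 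Nonnegativity and monotonicity of $B$, as well as the limits $B(0^+)=\Floc(0)$ and $B(\infty)=0$, then follow from dominated convergence with the bounded dominant $\Floc(0)$.

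For the derivative formula, I would introduce $\Phi(A) := \int_{H(A)} g(|x|)\,dx$, so that $B(z) = z\,\Phi(1/z)$. The isotropic scaling $H(A) = \sqrt{A}H_0$ makes $\partial H(A)$ move outward with normal velocity $x\cdot n/(2A)$, which by symmetry of the regular hexagon is constant along each edge (equal to the apothem divided by $2A$) and identical across edges; the area-perimeter identity $a(A)|\partial H(A)| = 2A$ turns this constant into $1/|\partial H(A)|$. Reynolds' transport theorem yields
$$\Phi'(A) = \frac{1}{|\partial H(A)|}\int_{\partial H(A)} g\,dS,$$
and differentiating $B(z) = z\Phi(1/z)$ with $\Phi(1/z) = B(z)/z$ produces $B'(z) = \Phi(1/z) - \Phi'(1/z)/z = G(z)$ as stated. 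Continuity of $G$ follows from continuity of $g$.

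For convexity and the structure of $B'$: after rescaling the boundary, $\Phi'(A) = \frac{1}{|\partial H_0|}\int_{\partial H_0} g(\sqrt{A}|y|)\,dS_y$ is nondecreasing in $A$ since $g$ is nondecreasing and $|y| \geq a_0 > 0$ on $\partial H_0$ (with $a_0$ the apothem of $H_0$). Hence $\Phi$ is convex, and the chain-rule identity $B''(z) = z^{-3}\Phi''(1/z) \geq 0$ (or, for nonsmooth $g$, monotonicity of right-derivatives translated through $B(z) = z\Phi(1/z)$) gives convexity of $B$. The flat segment of $B'$ corresponds exactly to the regime where $g(\sqrt{A}|y|) = \Floc(0)$ for all $y \in \partial H_0$: by \cref{lem:ConjugateProperties}(\ref{item:FlocZeroFinite}) one distinguishes two cases. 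Either $\Floc^\ast > -\Floc(0)$ strictly on $\R$, so $g < \Floc(0)$ on $[0,\infty)$, $\Phi'$ is strictly increasing, and $Z=0$; or $z^\ast := \sup\{z : \Floc^\ast(z) = -\Floc(0)\}$ is finite, in which case $g \equiv \Floc(0)$ on $[s^\ast,\infty)$ with $s^\ast := \cRad^{-1}(-z^\ast)$, so $\Phi' \equiv \Floc(0)$ for $A \geq (s^\ast/a_0)^2$, and a direct computation yields $B'(z) \equiv r = \int_{B_{s^\ast}(0)}(g(|x|) - \Floc(0))\,\d x < 0$ on $(0,Z]$ with $Z = (a_0/s^\ast)^2$. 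The limit $B'(z) \to 0$ as $z\to\infty$ follows from $|G(z)| \leq 2g(z^{-1/2}R)/z \to 0$, where $R$ is the circumradius of $H_0$.

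The subdifferential formulas are a routine convex-analysis readout of the structure just established: since $B$ is convex with $B = \infty$ on $(-\infty,0)$, continuous at $0$, and $B'$ is continuous on $(0,\infty)$, constant equal to $r$ on $(0,Z]$ and strictly increasing on $(Z,\infty)$ with limit $0$, the three interior cases of $\partial B(z)$ are immediate, while $\partial B(0) = (-\infty, r]$ comes from the convexity inequality $B(z) \geq \Floc(0) + sz$ for $s \leq r$; the formulas for $\partial B^\ast$ then follow by Fenchel duality $s \in \partial B(z) \Leftrightarrow z \in \partial B^\ast(s)$. I expect the main obstacle to be the convexity proof under minimal regularity of $g$, which is cleanest to phrase via monotonicity of the right-derivative $\Phi'_+$ rather than pointwise second derivatives, together with a careful bookkeeping through the identity $B(z) = z\Phi(1/z)$.
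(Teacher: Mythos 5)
Your proposal is correct and follows essentially the same route as the paper's proof: the derivative formula via the transport/Leibniz rule with normal velocity $1/|\partial \Hex(1/z)|$, convexity via monotonicity of the boundary average of the nondecreasing radial integrand, identification of the flat segment $(0,Z]$ from the flat part of $\Floc^\ast$ near $-\infty$, and a routine convex-analysis readout of the subdifferentials. The rescaling to the fixed unit hexagon and the explicit formulas for $r$ and $Z$ (via the observation that $\EDens$ is affine on $(0,Z]$) are nice presentational refinements but not a different argument.
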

\begin{example}[Balanced quantization]
\label{ex:BforBalancedQuantization}
We consider the case of the standard Wasserstein-2 distance, where $\cRad(t)=t^2$, $\Floc^\ast(z)=z$ and $\Floc(0)=\infty$. Then for $z>0$,
\[
\EDens(z) = z \int_{\Hex(1/z)} |x|^2 \d x = \frac{5 \sqrt{3}}{54} \frac{1}{z}, \qquad \EDens'(z) = G(z) = - \frac{5 \sqrt{3}}{54} \frac{1}{z^2}.
\]
and so $Z=0$, $r=-\infty$. For $s < 0$,
\[
\EDens^\ast(s) = - 2 \sqrt{ - \frac{5 \sqrt{3}}{54} \, s}, \qquad (\EDens^\ast)'(s) = \sqrt{ - \frac{5 \sqrt{3}}{54} \frac{1}{s}} = G^{-1}(s).
\]
\end{example}
\begin{remark}[Bounds in terms of cell problem] \label{rem:CellProblem}
	$\EDens(z)$ can be interpreted as energy density associated with a regular triangular lattice with point density $z$ (that is, each Voronoi cell occupies an area of $1/z$). The energy of such a lattice with $M$ cells with total area $|\Omega|$ will be given by $\EDens(M/|\Omega|) \cdot |\Omega|$.
Taking into account the scaling factor $\varepsilon$, we can restate the bounds \eqref{eq:QuantizationLowerBound} and \eqref{eq:QuantizationUpperBound} as
\begin{align*}
\min_{x_1,\ldots,x_M\in\Omega}J_{\varepsilon}^M(x_1,\ldots,x_M) &
	\geq |\Omega| \cdot \EDens\big(\tfrac{\varepsilon^2\,M}{|\Omega|}\big)
	\qquad \tn{and} \\
J_\varepsilon^M(x_1,\ldots,x_M) &
	\leq |\Omega| \cdot  \EDens\big(\tfrac{\varepsilon^2\,M}{|\Omega|}\big)
	- |\partial\Omega|\sqrt{\tfrac{8|\Omega|}{3\sqrt3M}}\,\Floc^\ast\!\!\left(-\cRad\left(\sqrt{
    \tfrac{C_Q \,|\Omega|}{\varepsilon^2M}
    }\right)\right).
\end{align*}
\end{remark}
\begin{proof}[Proof of \cref{lem:cellProblem}]
By \eqref{enm:quantIntPositive}, $\EDens(z)>0$ for $z>0$. Likewise, $\EDens(z)<\infty$ by \eqref{enm:quantIntMonotone} and \eqref{enm:quantIntFinite}.
Now observe that $\EDens$ yields the average value of $-\Floc^\ast(-\cRad(d(\cdot,0)))$ over $\Hex(1/z)$.
Hence, the monotonicity \eqref{enm:quantIntMonotone} of $-\Floc^\ast\circ(-\cRad)$ implies that $\EDens$ is decreasing. Further,\[
\lim_{z \searrow 0} \, \EDens(z) = \lim_{z \searrow 0} \, \int_{\Hex(1)} - \Floc^\ast(-\cRad(d(y/\sqrt{z},0))) \, \d y =
-\Floc^\ast(-\cRad(\infty))=B(0).
\]
Since $-\Floc^\ast\circ(-\cRad)$ is continuous by \eqref{enm:quantIntContinuous}, the integral in the definition of $\EDens$ is differentiable with respect to $z$ by the Leibniz integral rule, and we have
\begin{align*}
\EDens'(z)
&=\frac\d{\d z}\left[z\int_{\Hex(1/z)}-\Floc^\ast(-\cRad(d(x,0)))\,\d x\right]\\
&=\int_{\Hex(1/z)}-\Floc^\ast(-\cRad(d(x,0)))\,\d x+z\int_{\partial \Hex(1/z)}-\Floc^\ast(-\cRad(d(x,0)))v_n(z)\,\d x\cdot\left(-\frac1{z^2}\right)\,,
\end{align*}
where $v_n(z)=1/|\partial \Hex(1/z)|$ is the normal velocity of the hexagonal boundary as the area of the hexagon is increased at rate $1$.
This coincides with the formula provided in the statement.
To check convexity we first assume that $-\Floc^\ast\circ(-\cRad)$ is differentiable. In the following we use the notation
\begin{align*}
	\avint_{\partial \Hex(1/z)} f\,\d x= \frac{1}{|\partial \Hex(1/z)|} \int_{\partial \Hex(1/z)} f\,\d x
\end{align*}
and calculate
\begin{align*}
&\EDens''(z) =
\\
&=
	-\frac1{z^2}\left[
		\EDens(z)-\avint_{\partial \Hex(1/z)}-\Floc^\ast(-\cRad(d(x,0)))\,\d x
		\right] \\
& \quad	+\frac1z\left[
		\frac1z\left(
			\EDens(z)-\avint_{\partial \Hex(1/z)}-\Floc^\ast(-\cRad(d(x,0)))\,\d x
			\right)
		-\frac\d{\d z}\left(\avint_{\partial \Hex(1/z)}-\Floc^\ast(-\cRad(d(x,0)))\,\d x\right)
		\right]\\
&	=-\frac1z\frac\d{\d z}\left(
		\avint_{\partial \Hex(1/z)}-\Floc^\ast(-\cRad(d(x,0)))\,\d x\right)
\\
&	\geq0\
\end{align*}
since $-\Floc^\ast\circ(-\cRad)$ is nondecreasing. Therefore $\EDens$ is convex.
By \eqref{enm:quantIntMonotone} there exists some $R\in(0,\infty]$ such that $-\Floc^\ast\circ(-\cRad)$ is strictly increasing on $[0,R)$ and constant on $[R,\infty)$.
Thus we see $\EDens''>0$ on $(Z,\infty)$ for some $Z\geq0$ and $\EDens''(z)=0$ for $z<Z$.
The monotonicity properties of $\EDens'$ without assuming differentiability of $-\Floc^\ast\circ(-\cRad)$ now follow by a standard approximation argument.
Note that positivity and monotonicity of $B$ imply $B'(z)\to0$ as $z\to\infty$.
We leave it as an easy exercise in convex analysis to check the expressions for the subdifferentials $\partial \EDens$ and $\partial(\EDens^\ast)$.
\end{proof}

\begin{remark}[Calculation of asymptotic density]\label{rem:calculation}
Given a density $m$, the asymptotically optimal point density $D$ can be computed numerically based on the function $\EDens$ using
\begin{equation*}
D(x)\in\partial \EDens^\ast(\lambda/m(x))=\begin{cases}
\{0\}&\text{if }\lambda/m(x)<r,\\
[0,Z]&\text{if }\lambda/m(x)=r,\\
\{(\EDens')^{-1}(\lambda/m(x))\}&\text{if }\lambda/m(x)\in(r,0),\\
\emptyset&\text{otherwise,}
\end{cases}
\end{equation*}
where $r$ was defined in \cref{lem:cellProblem}.
\end{remark}

\begin{example}[Zador's Theorem is a special case of  Theorem \ref{thm:asymptoticQuantization}]
\label{example:Zador}
We show that Zador's Theorem \cite{Gr04,Za82} in two dimensions (see equation \eqref{eq:Zador} with $d=2$) can be recovered from Theorem 
\ref{thm:asymptoticQuantization}
by taking $\cRad(t)=t^p$ and $\Floc(s) = \iota_{\{1\}}(s)$.
In this case $\min J_{\varepsilon_M}$ is just the standard (balanced) optimal quantization error with respect to the Wasserstein-$p$ distance.
Note that $\Floc(0)=+\infty$ but the transport cost $\cRad$ is finite and so assumption \eqref{eq:QuantizationAsymptoticAssumptions_a} is satisfied.
We have $\Floc^\ast(z)=z$ and
\[
B(z) = c_6(p) z^{- \frac p2} \quad \tn{for } z>0 \tn{ and } +\infty \tn{ otherwise,}
\]
where
\[
c_6(p) = \int_{H(1)} |y|^p \, \d y.
\]
Therefore, for $z>0$, $s<0$, 
\[
B'(z) = - \frac{p}{2} c_6(p) z^{-\frac p2 - 1}, \qquad
(B')^{-1}(s) = \left( - \frac{p  c_6(p)}{2s} \right)^{\frac{2}{p+2}}.
\]
Assume that we are in Regime 3: $\lim_{M \to \infty} \varepsilon_M^2 M = P$.
Note that $B'$ is nowhere constant and so $Z=0$ and $r = \lim_{z \to 0} B'(z) = -\infty$. By Remark 
\ref{rem:calculation}, if $m(x) > 0$,
\begin{equation}
\label{eq:Rem4.18_D}
D(x) = (B')^{-1}(\lambda/m(x)) = \left( - \frac{p  c_6(p) m(x)}{2 \lambda} \right)^{\frac{2}{p+2}}
\end{equation}
where $\lambda<0$ is the constant given by Theorem 
\ref{thm:asymptoticQuantization}. Then
\begin{equation}
\label{eq:Rem4.18_P}
P = \int_\Omega D(x) \, \d x = \left( - \frac{p  c_6(p)}{2 \lambda} \right)^{\frac{2}{p+2}} \int_{\Omega} m(x)^{\frac{2}{p+2}} \, \d x.
\end{equation}
We can eliminate $\lambda$ from \eqref{eq:Rem4.18_D} and \eqref{eq:Rem4.18_P} to write $D$ in terms of $P$:
\[
D(x) = P m^{\frac{2}{p+2}} \left( \int_{\Omega} m(x)^{\frac{2}{p+2}} \, \d x \right)^{-1}.
\]
Therefore, by Theorem \ref{thm:asymptoticQuantization},
\begin{align*}
\lim_{M \to \infty} \varepsilon_M^{-p} \min_{\{ x_i \}_{i=1}^M} \sum_{i=1}^M \int_{V_i} |x-x_i|^p \, m(x) \d x
& = \lim_{M \to \infty} \min J^M_{\varepsilon_M}
\\
& = \int_{\Omega} B(D(x)) \, m(x) \d x 
\\
& = P^{-\frac{p}{2}} c_6(p) \left( \int_{\Omega} m(x)^{\frac{2}{p+2}} \, \d x \right)^{\frac{p+2}{2}}.
\end{align*}
Since $\lim_{M \to \infty} \varepsilon_M^2 M = P$, we can rewrite this as
\[
\lim_{M \to \infty} M^{\frac{p}{2}} \min_{\{ x_i \}_{i=1}^M} \sum_{i=1}^M \int_{V_i} |x-x_i|^p \, m(x) \d x
= c_6(p) \left( \int_{\Omega} m(x)^{\frac{2}{p+2}} \, \d x \right)^{\frac{p+2}{2}}
\]
which is exactly Zador's Theorem in two dimensions; see equations \eqref{eq:Zador} and \eqref{eq:Cp2}.
\end{example}

\begin{proof}[Proof of \cref{thm:asymptoticQuantization}]
\textbf{Regime 1: $\lim_{M\to\infty}\varepsilon_M^2M=\infty$.}
We can pick a sequence of radii $r_M$ such that $r_M/\varepsilon_M\to0$, but still $r_M^2M\to\infty$ (for instance pick $r_M=\varepsilon_M^{1/2}/M^{1/4}$).
Since $\Omega$ is a Lipschitz domain, for $M$ large and thus $r_M$ small enough, $\Omega$ can be covered with $K_M\leq 2|\Omega|/r_M^2$ squares of side length $r_M$
(for instance cover $\Omega$ with a regular square grid of lattice spacing $r_M$).
Now position the points $x_1,\ldots,x_M$ arbitrarily with the only condition that each square contains at least one point (this is possible for $M$ large enough, since $K_M/M\to0$ as $M\to\infty$).
Obviously, $\Omega$ is covered by the balls $B_{2r_M}(x_i)$ of radius $2r_M$ centred at the $x_i$, as $B_{2r_M}(x_i)$ includes the whole square containing $x_i$.
Thus we find $V_i(x_1,\ldots,x_M) \subset B_{2r_M}(x_i)$ for $i =1,\ldots,M$.
Then
\begin{align*}
	\min J_{\varepsilon_M}^M & \leq J_{\varepsilon_M}^M(x_1,\ldots,x_M) = \sum_{i=1}^M
		\int_{V_i(x_1,\ldots,x_M)} -\Floc^\ast
			\left(-\cRad\left(\tfrac{d(x,x_i)}{\varepsilon_M}\right)\right)\,\d\mu(x) \\
	& \leq -\Floc^\ast
			\left(-\cRad\left(\tfrac{2r_M}{\varepsilon_M}\right)\right) \cdot \mu(\Omega)
			\to -\Floc^\ast(-\cRad(0))\cdot\mu(\Omega) \quad \tn{as } M \to \infty
\end{align*}
since $[0,\infty) \ni z \mapsto -\Floc^\ast(-\cRad(z))$ is continuous in $z=0$ by \eqref{enm:quantIntContinuous}.
$J_{\varepsilon_M}^M\geq-\Floc^\ast(-\cRad(0))\cdot\mu(\Omega)$ follows from \eqref{enm:quantIntMonotone}, the monotonicity of $-\Floc^\ast\circ(-\cRad)$.

\textbf{Regime 2: $\lim_{M\to\infty}\varepsilon_M^2M=0$.}
\Cref{rem:APrioriEstimate} yields $\min J_{\varepsilon_M}^M \leq \mu(\Omega) \cdot \lim_{s\to\infty}-\Floc^\ast(-\cRad(s))$ (which may be infinite). Let now $r_1,r_2,\ldots$ be a positive sequence such that $r_M^2 \cdot M \to 0$ and $r_M/\varepsilon_M \to \infty$ as $M \to \infty$. Let $x_1,\ldots,x_M$ be arbitrary distinct points in $\Omega$ and set $S=\Omega \cap \bigcup_{i=1}^M B_{r_M}(x_i)$. Note that, since $r_M^2 \cdot M \to 0$ and $\mu \ll \Lebesgue$, $\mu(S) \to 0$ as $M \to \infty$. Clearly,
\begin{align*}
	\min_{i \in \{1,\ldots,M\}} -\Floc^\ast
			\left(-\cRad\left(\tfrac{d(x,x_i)}{\varepsilon_M}\right)\right) \geq
	\begin{cases}
		-\Floc^\ast
			\left(-\cRad\left(\tfrac{r_M}{\varepsilon_M}\right)\right) & \tn{for } x \in \Omega \setminus S, \\
			0 & \tn{for } x \in S.
	\end{cases}
\end{align*}
Therefore,
\begin{align*}
J_{\varepsilon_M}^M(x_1,\ldots,x_M) \geq -\Floc^\ast
			\left(-\cRad\left(\tfrac{r_M}{\varepsilon_M}\right)\right) \cdot \mu(\Omega \setminus S) \to \lim_{s\to\infty}-\Floc^\ast(-\cRad(s)) \cdot \mu(\Omega) \quad \tn{as } M \to \infty.
\end{align*}

\textbf{Regime 3: $\lim_{M\to\infty}\varepsilon_M^2M=P \in (0,\infty)$. Part 3.1: lower bound.}\phantomsection\label{QuantProof:3.1}
For $\delta \in (0,1)$ cover $\Omega$ by a regular grid of squares with edge length $\delta+\delta^2$. Denote by $\{S_i\}_{i=1}^N$ the $N$ squares that are fully contained in $\Omega$. For each $S_i$ denote by $\hat{S}_i$ the square of edge length $\delta$ that lies centered within $S_i$ such that $\dist(\hat{S}_i,S_j) \geq \delta^2/2$ for $i \neq j$ (the boundary layers of width $\delta^2$ will be necessary to control the interaction between neighbouring squares).
Denote the union of all $\hat{S}_i$ by $\hat{S}=\bigcup_{i=1}^N \hat{S}_i$. Since $\Omega$ is a Lipschitz domain and $\mu \ll \Lebesgue$, $\mu(\Omega \setminus \hat{S}) \to 0$ as $\delta \to 0$.

Let $x_1,\ldots,x_M$ be $M$ points from $\Omega$ and denote by $M_i$ the number of points in a square $S_i$ (points on the square boundaries are assigned to precisely one square). Obviously, $\sum_{i=1}^N M_i\leq M$.
Now pick an arbitrary $\eta>0$ (we will later send $\eta\to\infty$; it has the role to regularize the integrand and thus the cell problem function $\EDens$).
Note that for $x \in \hat{S}_i$ and $M$ large enough (depending on $\delta$) we have
$$\min_j d(x,x_j) \geq \min\left\{\min_{j: x_j \in S_i} d(x,\pi_{\hat{S}_i}(x_j)),\frac{\delta^2}2\right\}
\geq\min\left\{\min_{j: x_j \in S_i} d(x,\pi_{\hat{S}_i}(x_j)),\varepsilon_M\eta\right\}$$
where $\pi_{\hat{S}_i}$ denotes the orthogonal projection onto $\hat{S}_i$.
By the nonnegativity and monotonicity of $-\Floc^\ast(-\cRad(\cdot))$ one thus obtains
\begin{align*}
J_{\varepsilon_M}^M(x_1,\ldots,x_M) & = \int_\Omega \min_{j} -\Floc^\ast\left(-\cRad\left(\tfrac{d(x,x_j)}{\varepsilon_M}\right)\right)\,m(x)\,\d x \\
& \geq \sum_{i=1}^N m_i \int_{\hat{S}_i} \min_{j: x_j \in S_i} -\Floc^\ast\left(-\cRad\left(\min\left\{\tfrac{d(x,\pi_{\hat{S}_i}(x_j))}{\varepsilon_M},\eta\right\}\right)\right)\,\d x
\end{align*}
with $m_i := \inf_{x \in S_i} m(x)$.
Next introduce
$$\phi_\eta(t)=-\Floc^\ast\left(-\cRad(\min\{t,\eta\})\right).$$
One can readily verify that actually $\phi_\eta=-\Floc_\eta^\ast\circ(-\cRad)$, where $\Floc_\eta$ is the lower semi-continuous convex envelope of a modification of $\Floc$,
\begin{equation*}
\Floc_\eta=\tilde\Floc^{\ast\ast}
\qquad\text{with }
\tilde\Floc(s)=\begin{cases}
\Floc(s)&\text{if }s\neq0,\\
-\Floc^\ast\left(-\cRad\left(\eta\right)\right)&\text{else.}
\end{cases}
\end{equation*}
Let 
\[
\hat{M}_i = \# \{ \pi_{\hat{S}_i}(x_j) : x_j \in S_i \} \le M_i
\]
be the number of points in $S_i$ that are distinct after projecting onto $\hat{S}_i$.
We can now apply \cref{thm:lowerBound}
(lower bound for quantization of the Lebesgue measure)
or \cref{rem:CellProblem}
on each square $\hat{S_i}$ separately (the boundary layers of width $\delta^2$ are necessary to control the interaction between neighbouring squares),
\begin{align*}
J_{\varepsilon_M}^M(x_1,\ldots,x_M) & \geq \sum_{i=1}^N m_i \int_{\hat{S}_i} \min_{j: x_j \in S_i}
	\phi_\eta\left(d(x,\pi_{\hat{S}_i}(x_j))/\varepsilon_M \right)\,\d x \\
	& \geq \sum_{i=1}^N m_i\, \hat{M}_i \int_{H(|\hat{S}_i|/\hat{M}_i)} \phi_\eta\left(d(x,0)/\varepsilon_M \right)\,\d x \\
	& = \sum_{i=1}^N m_i\,|\hat{S}_i| \EDens_\eta\left(\tfrac{\varepsilon_M^2\,\hat{M}_i}{|\hat{S}_i|}\right)
\end{align*}
where $\EDens_\eta$ is defined as in \cref{lem:cellProblem}, only with $-\Floc^\ast\circ(-\cRad)$ replaced by $\phi_\eta$ or equivalently $\Floc$ by $\Floc_\eta$.
Now set
\begin{align*}
	E(x) := \begin{cases} \frac{\varepsilon_M^2\,\hat{M}_i}{|\hat{S}_i|} & \tn{ for $x \in \hat{S}_i$}, \\
	0 & \tn{otherwise,}
	\end{cases}
\end{align*}
and let $L_m$ denote the Lipschitz constant of the density $m$. Then
\begin{align*}
\int_{\Omega} \EDens_\eta (E(x)) \,  \d \mu(x)
& = \sum_{i=1}^N \int_{\hat{S}_i} \EDens_\eta \left( \tfrac{\varepsilon_M^2 \, \hat{M}_i}{\delta^2} \right) m(x) \, \d x
+ \int_{\Omega \setminus \hat{S}} \EDens_\eta(0) m(x) \, \d x
\\
& = \sum_{i=1}^N  \int_{\hat S_i} \EDens_\eta \left( \tfrac{\varepsilon_M^2 \, \hat{M}_i}{\delta^2} \right) m_i \, \d x
\\
& \quad + \sum_{i=1}^N \int_{\hat{S}_i} \EDens_\eta \left( \tfrac{\varepsilon_M^2 \, \hat{M}_i}{\delta^2} \right) \left( m(x) - m_i\right)\, \d x
+ \mu(\Omega \setminus \hat{S}) \cdot \EDens_\eta(0)
\\
& \leq 
	\sum_{i=1}^N m_i |\hat S_i|
		\EDens_\eta\left(\tfrac{\varepsilon_M^2\,\hat{M}_i}{\delta^2}\right)
	+ L_m \cdot \sqrt{2} \, \delta \cdot \EDens_\eta(0) \cdot |\Omega|
	+ \mu(\Omega \setminus \hat{S}) \cdot \EDens_\eta(0)
\end{align*}
where in the last step we used $\EDens_\eta(z) \leq \EDens_\eta(0)<\infty$ for $z \geq 0$.
Abbreviate the last two summands (that do not depend on $M$) as $C_\delta\EDens_\eta(0)$ and note that $C_\delta \to 0$ as $\delta \to 0$,
then in summary we have arrived at
\begin{equation}
\label{eq:GammaLimInf}
J_{\varepsilon_M}^M(x_1,\ldots,x_M) \geq \int_{\Omega} \EDens_\eta (E(x)) \,  \d \mu(x) - C_\delta\EDens_\eta(0).
\end{equation}
The function $E$ satisfies $\int_\Omega E(x)\,\d x = \varepsilon_M^2 \sum_{i=1}^N \hat{M}_i \leq \varepsilon_M^2 M$. By minimizing over all such functions we thus obtain a lower bound for the minimum,
\begin{align*}
	\min J_{\varepsilon_M}^M & \geq \inf \left\{
	\int_{\Omega} \EDens_\eta(E(x))\,m(x)\,\d x \,\middle|\,
E \in L^1(\Omega;[0,\infty)),
\int_\Omega E(x)\,\d x \leq \varepsilon_M^2 \cdot M \right\} - C_\delta\EDens_\eta(0). \nonumber
\intertext{Since $\EDens_\eta$ is nonincreasing, the estimate can be rewritten as}
  \min J_{\varepsilon_M}^M & \geq \inf \left\{
  \int_{\Omega} \EDens_\eta(E(x))\,m(x)\,\d x \,\middle|\,
  E \in L^1(\Omega;[0,\infty)),
   \int_\Omega E(x)\,\d x = \varepsilon_M^2 \cdot M \right\} -C_\delta\EDens_\eta(0).
\end{align*}
Now denote by $L_B$ the Lipschitz constant of $\EDens_\eta$ on $[0,\infty)$ (which exists by \cref{lem:cellProblem}).
If $E \in L^1(\Omega;[0,\infty))$ satisfies $\int_\Omega E(x)\,\d x = \varepsilon_M^2 \cdot M$, then $\tilde{E}:=\frac P{(\varepsilon_M^2M)}E$ satisfies $\int_\Omega \tilde{E}(x) \, \d x = P$ and
\begin{multline*}
 \left| \int_{\Omega} \EDens_\eta(\tilde{E}(x)) \, \d \mu(x) - \int_{\Omega} \EDens_\eta(E(x)) \, \d \mu(x) \right|
\leq  L_{\EDens} \int_{\Omega} |E(x)-\tilde{E}(x)| \, \d \mu(x)\\
\leq L_{\EDens}|\varepsilon_M^2M-P|\max_{x\in\Omega}m(x)
=:c_M,
\end{multline*}
which converges to zero as $M\to\infty$. Summarizing, we obtain
\begin{align*}	
\min J_{\varepsilon_M}^M & \geq \inf \left\{
	\int_{\Omega} \EDens_\eta(E(x))\,\d\mu(x) \,\middle|\, 
E \in L^1(\Omega;[0,\infty)),
 \int_\Omega E(x)\,\d x = P \right\}-C_\delta\EDens_\eta(0)-c_M.
\end{align*}
Now let first $M\to\infty$ and then $\delta\to0$.
Introducing a Lagrange multiplier $\lambda$ for the constraint on $E$ we thus find
\begin{align*}
	\liminf_{M \to \infty} \min J_{\varepsilon_M}^M & \geq
 \inf_{E \in L^1(\Omega;[0,\infty))}
	\sup_{\lambda \in \R} \int_{\Omega} \left[
		\EDens_\eta(E(x))\,m(x) - \lambda\,E(x)\right]\,\d x + \lambda \cdot P \\
	& \geq \sup_{\lambda \in \R} 
\, \inf_{E \in L^1(\Omega;[0,\infty))}
	 \int_{\Omega} \left[
		\EDens_\eta(E(x))\,m(x) - \lambda\,E(x)\right]\,\d x + \lambda \cdot P \\
	& \geq \sup_{\lambda \in \R}
		\int_{\Omega} \inf_{E \geq 0} \left[
		\EDens_\eta(E)\,m(x) - \lambda\,E\right]\,\d x + \lambda \cdot P \\
	& = \sup_{\lambda \in \R}
		\int_{\Omega} \left[-m(x) \cdot \EDens_\eta^\ast(\lambda/m(x))
		\right]\,\d x + \lambda \cdot P.
\end{align*}
Now as $\eta \to \infty$ one has the pointwise convergence $\phi_\eta \nearrow -\Floc^\ast\circ(-\cRad)$
and thus $\EDens_\eta \nearrow \EDens$ pointwise by the monotone convergence theorem.
In fact, $\phi_\eta(r)=-\Floc^\ast(-\cRad(r))$ for all $r<\eta$
and therefore $\EDens_\eta(z)=\EDens(z)$ for all $z>2/(3\sqrt3\eta^2)$.
Consequently, also $\EDens_\eta^\ast \searrow \EDens^\ast$ monotonously so that one obtains sharper bounds with increasing $\eta$.
Thus, using again the monotone convergence theorem,
\begin{align*}
	\liminf_{M \to \infty} \min J_{\varepsilon_M}^M & \geq \sup_{\eta>0} \sup_{\lambda \in \R}
		\int_{\Omega} \left[-m(x) \cdot \EDens_\eta^\ast(\lambda/m(x))
		\right]\,\d x + \lambda \cdot P \\
		& = \sup_{\lambda \in \R} \sup_{\eta>0}
		\int_{\Omega} \left[-m(x) \cdot \EDens_\eta^\ast(\lambda/m(x))
		\right]\,\d x + \lambda \cdot P \\
		& = \sup_{\lambda \in \R}
		\int_{\Omega} \left[-m(x) \cdot \EDens^\ast(\lambda/m(x))
		\right]\,\d x + \lambda \cdot P.
\end{align*}
Evaluating the supremum over $\lambda$ we finally obtain
\begin{align*}
	\liminf_{M \to \infty} \min J_{\varepsilon_M}^M &
		\geq \left[\kappa\mapsto\int_\Omega \EDens^\ast(\kappa/m(x))\,\d\mu(x)\right]^\ast(P)\,.
\end{align*}

\textbf{Part 3.2: optimal limit density.}\phantomsection \label{QuantProof:3.2}
Observe that
$\EDens^\ast(z)=\infty$ for $z>0$, $\EDens^\ast$ is convex, lower semi-continuous, nondecreasing, satisfies $\lim_{z \to -\infty} \EDens^\ast(z)/|z|=0$ (which follows from $s \cdot z-\EDens^\ast(z)\leq B(s)<\infty$ for all $s>0$, $z<0$), and has infinite left derivative at $0$ (by \cref{lem:cellProblem}).
Therefore the map
\[
\R \ni \lambda \mapsto \int_{\Omega} \left[-m(x) \cdot \EDens^\ast(\lambda/m(x))
		\right]\,\d x + \lambda \cdot P
\]
is concave and
there exists a maximizing $\lambda<0$ satisfying the necessary and sufficient optimality condition
\begin{multline*}
0 \in \partial \left[ \kappa \mapsto \int_{\Omega} m(x) \cdot \EDens^\ast\left(\tfrac\kappa{m(x)}\right)
		\,\d x - \kappa \cdot P \right](\lambda) \quad \Longleftrightarrow \quad
  \\
P\in\partial\left[\kappa\mapsto\int_\Omega m(x)\EDens^\ast\left(\tfrac\kappa{m(x)}\right)\,\d x\right](\lambda).
\end{multline*}

We next aim to find a function $D_\xi:\Omega\to[0,\infty)$ satisfying $D_\xi(x)\in\partial\EDens^\ast(\lambda/m(x))$ as well as $\int_{\Omega} D_{\xi}(x) \, \d x = P$.
To this end
recall $Z$ and $r$ from \cref{lem:cellProblem} and define
\begin{align*}
	D_\xi(x) & = \begin{cases} (\EDens^\ast)'(\lambda/m(x)) & \tn{if } m(x) > \lambda/r, \\
		\xi \in [0,Z] & \tn{if } m(x)=\lambda/r, \\
		0 & \tn{otherwise.}
		\end{cases}
\end{align*}
Since $m$ is continuous, its superlevel sets are Lebesgue measurable. $D_\xi$ is constructed by assigning new values to the level sets of $m$ in a monotone way (by the monotonicity of $(B^{\ast})'$, see \cref{lem:cellProblem}), hence it is also Lebesgue measurable.
We now pick $\xi(P)\in[0,Z]$ such that $\int_{\Omega} D_{\xi(P)}(x) \, \d x = P$. (Note that necessarily $\xi(P)=0$ if $Z=0$, and the choice of $\xi$ is irrelevant if $\{x \in \Omega\,|\,m(x)=\lambda/r\}$ is a nullset. The following argument still applies in these cases.)
Such a $\xi(P)$ exists due to $\int_\Omega D_0(x)\,\d x\leq P$ and $\int_\Omega D_Z(x)\,\d x\geq P$, as we now show.
Indeed, note by \cref{lem:cellProblem} that for all $x\in\Omega$ the function $D_0(x)$ equals the left derivative of $\EDens^\ast$ at $\lambda/m(x)$ (which by convention shall be $0$ for $m(x)=0$), while $D_Z(x)$ equals the right derivative.
Beppo Levi's monotone convergence theorem thus yields
\begin{align}
\nonumber
\int_\Omega D_0(x)\,\d x
& =\int_\Omega\lim_{\tilde\lambda\nearrow\lambda}\frac{m(x)\EDens^\ast\left(\tfrac{\tilde\lambda}{m(x)}\right)-m(x)\EDens^\ast\left(\tfrac\lambda{m(x)}\right)}{\tilde\lambda-\lambda}\,\d x
\\
\label{eq:secondlimit}
& =\lim_{\tilde\lambda\nearrow\lambda}\frac{\displaystyle \int_\Omega m(x)\EDens^\ast\left(\tfrac{\tilde\lambda}{m(x)}\right)\,\d x-\int_\Omega m(x)\EDens^\ast\left(\tfrac\lambda{m(x)}\right)\,\d x}{\tilde\lambda-\lambda}
\\
\nonumber
& \leq P
\end{align}
since $P\in\partial\left[\kappa\mapsto\int_\Omega m(x)\EDens^\ast\left(\tfrac\kappa{m(x)}\right)\,\d x\right](\lambda)$
and since \eqref{eq:secondlimit} is the left derivative of $\lambda\mapsto\int_\Omega m(x)\EDens^\ast\left(\tfrac\lambda{m(x)}\right)\,\d x$.
The inequality $\int_\Omega D_Z(x)\,\allowbreak\d x\allowbreak \geq P$ follows analogously.
Writing $D=D_{\xi(P)}$ we finally obtain \eqref{eqn:optCond} and
\begin{align*}
\lim_{M \to \infty} \min J_{\varepsilon_M}^M
& \geq\int_\Omega-m(x)\EDens^\ast(\lambda/m(x))\,\d x+\lambda P \\
& =\int_\Omega\frac\lambda{m(x)}D(x)-\EDens^\ast(\lambda/m(x))\,\d\mu(x)
\\
& =\int_\Omega \EDens(D(x))\,\d\mu(x)\,,
\end{align*}
where the last equality follows from the Moreau--Fenchel identity \cite[Prop.\,16.9]{BauschkeCombettes11}, which states that $\EDens(s)+\EDens^\ast(t) = st
\; \Longleftrightarrow \; s \in \partial \EDens^\ast(t) \; \Longleftrightarrow \; t \in \partial \EDens(s)$.

\textbf{Part 3.3: upper bound.}\phantomsection \label{QuantProof:3.3}
Finally, we derive the corresponding upper bound, essentially by constructing a piecewise triangular point configuration with point density approximating the expected point density $D$ from the above lower bound proof.
Fix $M \in \mathbb{N}$, $\delta > 0$, and $\eta \in (0,1)$.
Denote the $1$-neighbourhood of $\Omega$ by $\underline\Omega=\Omega+B_1(0)$
and extend the mass density $m$ and the expected point density $D$ to $\underline\Omega\setminus\Omega$ by zero.
Cover $\Omega$ with a tessellation of squares of side length $\delta$ (we will later send $\delta\to0$). We keep the squares $\{S_i\}_{i=1}^{N_\delta}$ that intersect $\Omega$.
We may assume $\delta$ to be small enough such that all squares lie within $\underline\Omega$.

Define 
$D_\eta: \underline\Omega \to (0,\infty)$
to be a slight modification of the expected point density $D$,
\[
D_\eta = (1-\eta) D + \eta \, \frac{P}{2|\underline\Omega|}.
\]
The main role of the regularization parameter $\eta$ is to ensure that we distribute particles throughout the whole domain $\Omega$, even in regions where $m=0$. We will send $\eta \to 0$ at the very end of the proof. For $i \in \{1,\ldots,N_\delta\}$, define the point number $M_i=M_i(M,\delta,\eta)$ associated with square $S_i$ by
\[
M_i 
= 
\left\lceil 
\tfrac{1}{\varepsilon_M^2} \int_{S_i} D_\eta(x)\,\d x
\right\rceil.
\]
Note that
\[
\sum_{i=1}^{N_\delta} M_i \le M \quad \textrm{if $M$ is sufficiently large.}
\]
Indeed, we have
\begin{multline*}
\varepsilon_M^2 \sum_{i=1}^{N_\delta} M_i 
\le \varepsilon_M^2 \sum_{i=1}^{N_\delta}
\left( \tfrac1{\varepsilon_M^2} \int_{S_i} D_\eta(x)\,\d x + 1\right)
\\
< \left( (1-\eta) \int_\Omega D(x) \, \d x + \eta \tfrac P2 \right)  + \varepsilon^2_M N_\delta
= (1-\tfrac\eta2)P + \varepsilon^2_M N_\delta
\end{multline*}
so that
\[
\varepsilon_M^2 \left( \sum_{i=1}^{N_\delta} M_i - M \right)
< (1-\tfrac\eta2)P - \varepsilon_M^2 M + \varepsilon_M^2 N_\delta
\xrightarrow[M \to \infty]{} -\tfrac\eta2P < 0.
\]

Next, on each $S_i$ we choose $M_i$ quantization points as in \cref{thm:upperBound} 
(upper bound for quantization of the Lebesgue measure)
and distribute the remaining $M-\sum_{i=1}^{N_\delta}M_i$ points arbitrarily in $\Omega$ (which does not increase the cost).
By \cref{rem:CellProblem} we thus obtain
\begin{align*}
J_{\varepsilon_M}^M(x_1,\ldots,x_M) 
& \leq
\sum_{i=1}^{N_\delta} \int_{S_i} 
\, \min_{k \in \{1,\ldots,M\}:x_k\in S_i}-\Floc^\ast
		\left(-\cRad\left(\frac{d(x,x_k)}{\varepsilon_M}\right)\right)
	\,\d \mu(x)
\\
&\leq
\sum_{i=1}^{N_\delta} \max_{S_i}m\, \int_{S_i} 
\, \min_{k \in \{1,\ldots,M\}:x_k\in S_i}-\Floc^\ast
		\left(-\cRad\left(\frac{d(x,x_k)}{\varepsilon_M}\right)\right)
	\,\d x
\\
&\leq
\sum_{i=1}^{N_\delta} \max_{S_i}m
\left[|S_i| \cdot  \EDens\big(\tfrac{\varepsilon_M^2\,M_i}{|S_i|}\big)
- |\partial S_i|\sqrt{\tfrac{8|S_i|}{3\sqrt3M_i}}\,\Floc^\ast\!\!\left(-\cRad\left(\sqrt{
\tfrac{C_Q\,|S_i|}{\varepsilon_M^2M_i}
}\right)\right)\right].
\end{align*}
Exploiting the continuity of $B$ and finiteness and monotonicity of $-\Floc^\ast\circ(-\cRad)$ on $(0,\infty)$ as well as
\begin{equation*}
\lim_{M \to \infty} \varepsilon_M^2 M_i = 
\int_{S_i} D_\eta(x) \, \d x>0
\end{equation*}
we thus obtain
\begin{equation*}
\limsup_{M\to\infty}J_{\varepsilon_M}^M(x_1,\ldots,x_M) 
\leq\sum_{i=1}^{N_\delta} \max_{S_i}m\,
|S_i| \cdot  \EDens\left(\tfrac{1}{|S_i|}\int_{S_i}D_\eta(x)\,\d x\right).
\end{equation*}
Define $E^\delta: \underline\Omega \to [0,\infty)$, $m^\delta:\underline\Omega \to [0,\infty)$ by
\begin{alignat*}{3}
E^{\delta}(x) & = \frac1{|S_i|} \int_{S_i} D_\eta(x) \, \d x &\quad\text{if } x \in S_i&\quad\text{and $E^{\delta}(x)=\frac{\eta P}{2|\underline\Omega|}$ else},
\\
m^\delta(x) & = \max_{S_i} m &\quad\text{if } x \in S_i&\quad\text{and $m^\delta(x)=0$ else}.
\end{alignat*}
Then we can rewrite the previous inequality as follows:
\begin{equation}
\label{eq:AlmostThere1}
\limsup_{M\to\infty}\min J_{\varepsilon_M}^M \le 
\limsup_{M \to \infty} 
J_{\varepsilon_M}^M(x_1,\ldots,x_M)  
\le \int_{\underline\Omega}B\left( E^\delta(x) \right) m^\delta(x) \, \d x.
\end{equation}

Next we pass to the limits $\delta \to 0$ and $\eta \to 0$, in that order. By the Lebesgue Differentiation Theorem, $\lim_{\delta \to 0} E^\delta = D_\eta$ pointwise almost everywhere. Since $m$ is upper semi-continuous, then $\lim_{\delta \to 0} m^\delta =m$ pointwise. Moreover, $E^\delta \ge \eta P/2|\underline\Omega|$ 
and $B$ is nonincreasing
on $(0,\infty)$. Hence
\[
B\left( E^\delta(x) \right) m^\delta(x)
\le B \left( \frac{\eta P}{2|\underline\Omega|} \right) \max_\Omega m.
\]
Therefore, by the Dominated Convergence Theorem,
\begin{equation}
\label{eq:AlmostThere2}
    \lim_{\delta \to 0^+}
\int_{\underline\Omega} B\left( E^\delta(x) \right) m^\delta(x) \, \d x
= \int_\Omega B\left( D_\eta(x) \right) m(x) \, \d x.
\end{equation}
Finally, by the convexity of $B$,
\begin{equation}
\label{eq:AlmostThere4}
 \int_\Omega B\left( D_\eta(x) \right) m(x) \, \d x
 \le
 (1-\eta) \int_\Omega B\left( D(x) \right) m(x) \, \d x
 + \eta B \left( \frac{P}{2|\underline\Omega|} \right) \int_\Omega m(x) \, \d x.
\end{equation}
By taking the limits $\delta \to 0$, then $\eta \to 0$ in \eqref{eq:AlmostThere1} and using 
\eqref{eq:AlmostThere2}-\eqref{eq:AlmostThere4}
we obtain the matching upper bound
\[
\limsup_{M\to\infty}\min J_{\varepsilon_M}^M \le
\int_\Omega B\left( D(x) \right) m(x) \, \d x
\]
as required.
\end{proof}

\begin{remark}[Lipschitz condition]
Inspecting the proof we see that the Lipschitz condition on $m$ can actually be replaced by mere continuity;
then all estimates based on the Lipschitz constant have to be replaced using the modulus of continuity of $m$.
\end{remark}

\begin{remark}[$\Gamma$-convergence for unbalanced quantization]
\label{rem:Gamma}
We conjecture that \Cref{thm:asymptoticQuantization} can be expanded into a $\Gamma$-convergence result in the spirit of \cite[Proposition 7.18]{Santambrogio-CV} for balanced optimal transport. For given $M \in \N$, define the functional
\begin{align*}
	\mc{J}^M(\nu) & := \begin{cases}
		J^M_{\varepsilon_M}(x_1,\ldots,x_M) & \text{if } \nu=\varepsilon_M^2 \sum_{i=1}^M \delta_{x_i} \\
		+ \infty & \tn{otherwise,}
		\end{cases}
\intertext{and let the limit functional be given by}
	\mc{J}(\nu) & := \begin{cases} \int_\Omega \EDens\left(\RadNik{\nu}{\Lebesgue}(x)\right)\,\d\mu(x) & \tn{if } \nu \geq 0, \\
	+ \infty & \tn{otherwise.}
	\end{cases}
\end{align*}
In the definition of $\mc{J}$, the part of $\nu$ that is singular with respect to $\Lebesgue$ is simply discarded.
It seems plausible that $\mc{J}^M$ $\Gamma$-converges to $\mc{J}$ with respect to the weak topology.
\Cref{thm:asymptoticQuantization}\eqref{enm:fewMasses} describes the special case of this result where the limit measure $\nu$ is $0$.
\Cref{thm:asymptoticQuantization}\eqref{enm:normalMasses} describes the special case of $\nu=D \cdot \Lebesgue \neq 0$ that are minimizing for a prescribed mass $P$.
In the regime of \Cref{thm:asymptoticQuantization}\eqref{enm:manyMasses}, the sequence of measures is diverging and thus not described by such a $\Gamma$-convergence.

\hyperref[QuantProof:3.1]{Part 3.1} of the proof of \Cref{thm:asymptoticQuantization} establishes the lim-inf inequality for minimizing sequences.
\hyperref[QuantProof:3.2]{Part 3.2} identifies the optimal density $D$ of the limit functional $\mc{J}$.
\hyperref[QuantProof:3.3]{Part 3.3} essentially derives the lim-sup condition for limit measures $\nu=D \cdot \Lebesgue$, i.e.~without a Lebesgue-singular part. The optimality of $D$ is not leveraged in this part of the proof and it applies to general densities.
A complete $\Gamma$-convergence result would require an extension of the first part to configurations $(x_i)_{i=1}^M$ that are not (approximately) minimal, and to include the Lebesgue-singular part of $\nu$ in both the lower and upper bounds. Given that the proof for the lower bound is already rather technical, we choose to not provide this extension here.
\end{remark}

\begin{remark}[Quantization regimes]\label{rem:quantizationRegimes}
The proof shows that the set of near- or asymptotically optimal point distributions for $\lim_{M\to\infty}\varepsilon_M^2M\in\{0,\infty\}$ is quite degenerate.
Indeed, if the limit is zero, then arbitrarily placed points $x_1,\ldots,x_M\in\Omega$ were shown to asymptotically achieve the optimal energy.
The interpretation is that in the limit $M\to\infty$ no transport takes place between $\mu$ and its discrete quantization approximation so that the quantization energy equals the cost for changing mass distribution $\mu$ to zero.
If on the other hand the limit is infinite, then Dirac masses can be distributed over $\Omega$ in such a dense fashion that all transport distances and thus transport costs become negligibly small.
Thus, to achieve the asymptotic cost $0$ it suffices for instance to have a more or less uniform distribution of $x_1,\ldots,x_M\in\Omega$, but otherwise the point arrangement does not matter.
The case $\lim_{M\to\infty}\varepsilon_M^2M\in(0,\infty)$ seems to be more rigid; here the optimal asymptotic cost is achieved by a construction which locally looks like a triangular lattice.
\end{remark}

\begin{example}[Hellinger--Kantorovich]
The function $\EDens$ from \cref{lem:cellProblem} and its derivative $\EDens'$ can be computed numerically for different unbalanced transport models;
we here consider the Hellinger--Kantorovich setting.
In this case, computing the integral just on one triangular segment of $H(\frac1z)$ we obtain
\begin{align*}
\EDens(z)&=z\left(6\int_{-\pi/6}^{\pi/6}\int_0^{L(\alpha,z)}\sin^2\left(\min\left\{r,\tfrac{\pi}{2}\right\}\right)r\,\d r\,\d\alpha\right)\\
&=3\int_{-\pi/6}^{\pi/6}z\max\left\{\tfrac14+\tfrac{L(\alpha,z)^2}2-\tfrac{\cos(2L(\alpha,z))}4-\tfrac{L(\alpha,z)\sin(2L(\alpha,z))}2,\tfrac12-\tfrac{\pi^2}8+L(\alpha,z)^2\right\}\,\d\alpha
\end{align*}
for $L(\alpha,z)=1/(\sqrt{2\sqrt3z}\cos\alpha)$ the length of the ray starting from the hexagon centre at angle $\alpha$.
The resulting  $\EDens'$ (computed numerically) is shown in \cref{fig:asymptoticDensities}.
Thus, for a given mass distribution $\mu=m \Lebesgue \restr\Omega$ we can compute the asymptotically optimal point density $D$ of the quantization problem from \cref{thm:asymptoticQuantization,rem:calculation}.
\Cref{fig:asymptoticDensities} shows computed examples for such asymptotic densities.
One can see that the variations of $\mu$ are reduced for large values of $P$, but amplified for small values of $P$ (in particular, large areas of $\Omega$ have zero point density).
\end{example}

\begin{figure}[H]
\begin{center}
\includegraphics[width=.5\linewidth]{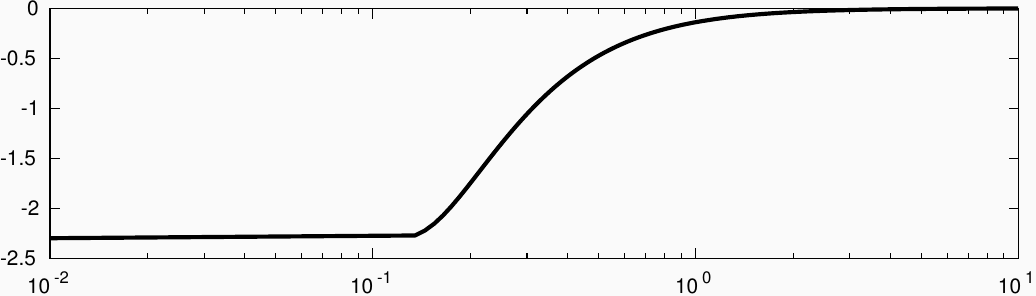}%
\setlength\unitlength{.5\linewidth}%
\begin{picture}(0,0)
\put(-1.05,.1){\rotatebox{90}{\small$B'(z)$}}
\put(-.5,-.02){\small$z$}
\end{picture}%
\\[1\baselineskip]
\end{center}
\setlength\unitlength{.135\linewidth}%
\centering%
\includegraphics[width=\unitlength]{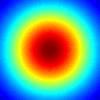}
\includegraphics[width=\unitlength]{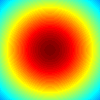}
\includegraphics[width=\unitlength]{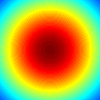}
\includegraphics[width=\unitlength]{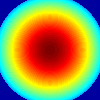}
\includegraphics[width=\unitlength]{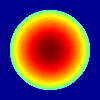}
\includegraphics[width=\unitlength]{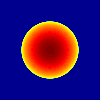}
\includegraphics[width=\unitlength]{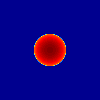}\\
\makebox[\unitlength][c]{input $\mu$}
\makebox[\unitlength][c]{$P=2.4$}
\makebox[\unitlength][c]{$P=0.69$}
\makebox[\unitlength][c]{$P=0.28$}
\makebox[\unitlength][c]{$P=0.12$}
\makebox[\unitlength][c]{$P=0.049$}
\makebox[\unitlength][c]{$P=0.013$}\\[1\baselineskip]
\includegraphics[width=\unitlength]{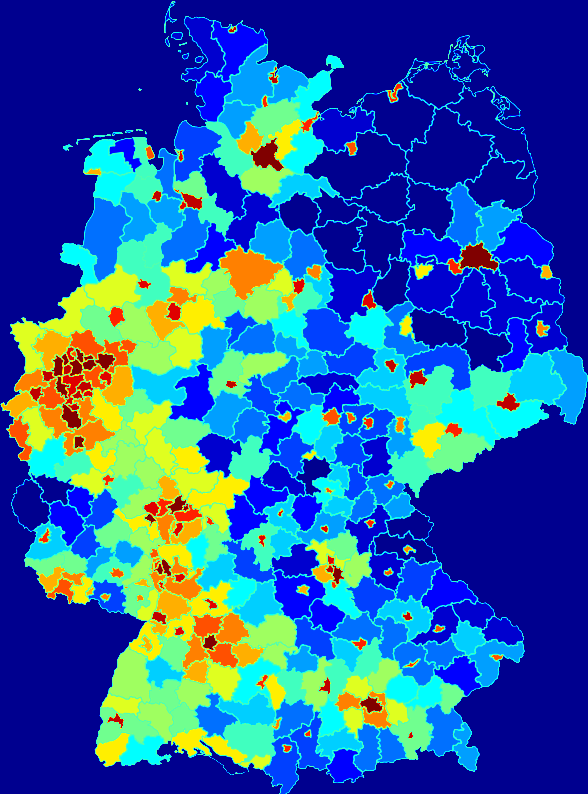}
\includegraphics[width=\unitlength]{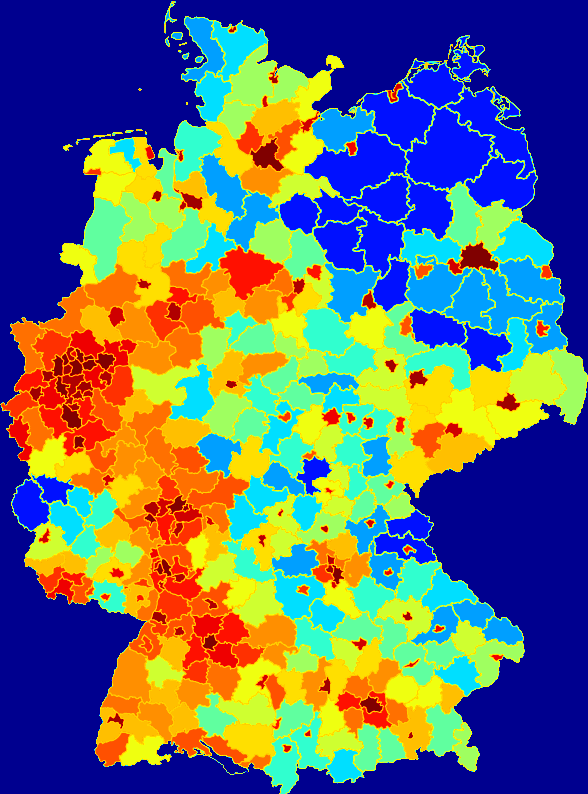}
\includegraphics[width=\unitlength]{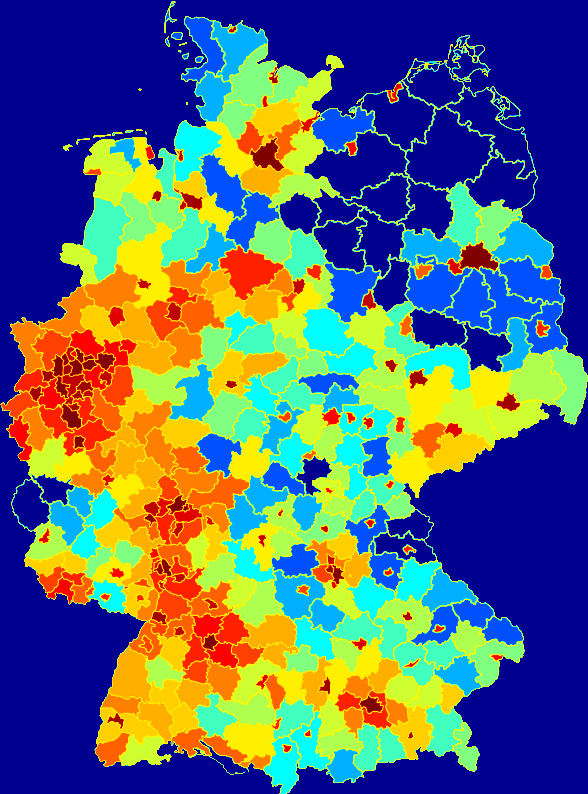}
\includegraphics[width=\unitlength]{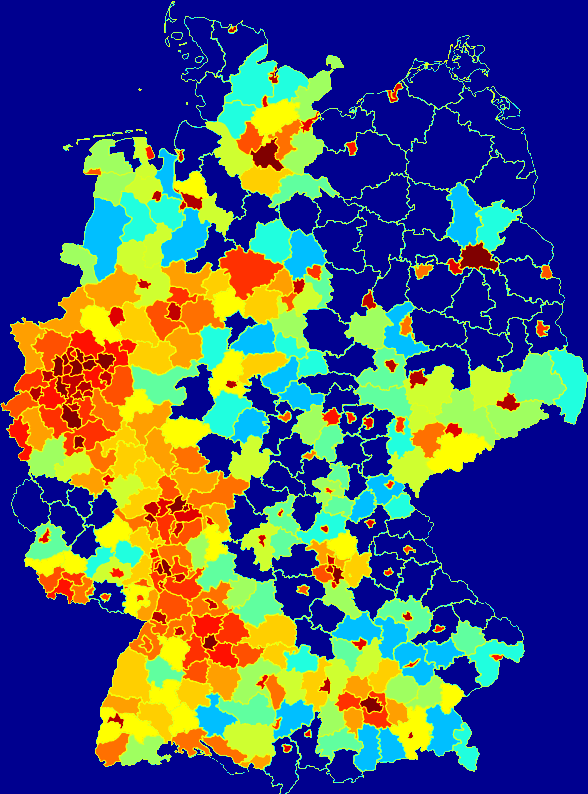}
\includegraphics[width=\unitlength]{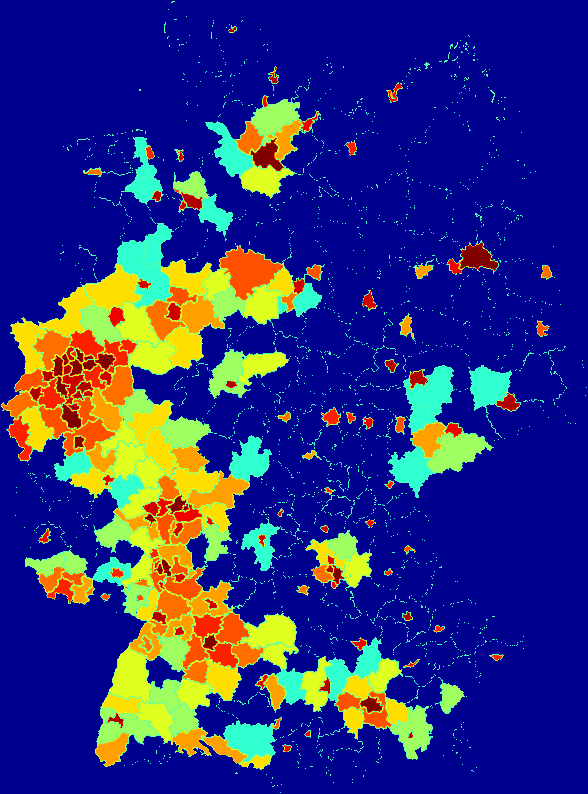}
\includegraphics[width=\unitlength]{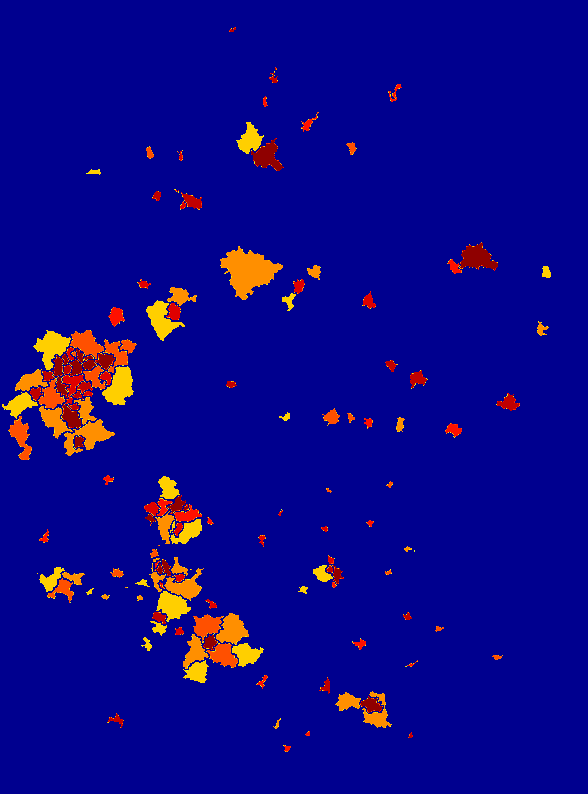}
\includegraphics[width=\unitlength]{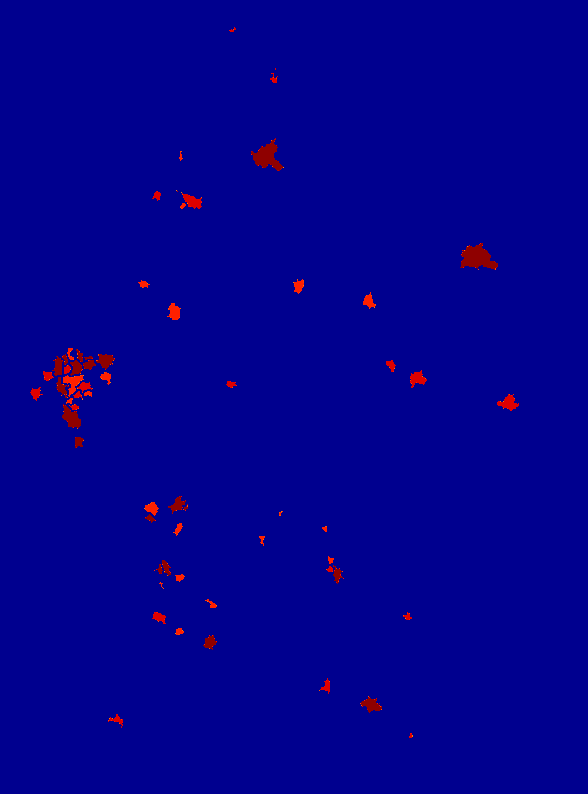}\\
\makebox[\unitlength][c]{input $\mu$}
\makebox[\unitlength][c]{$P=0.90$}
\makebox[\unitlength][c]{$P=0.25$}
\makebox[\unitlength][c]{$P=0.093$}
\makebox[\unitlength][c]{$P=0.035$}
\makebox[\unitlength][c]{$P=0.0075$}
\makebox[\unitlength][c]{$P=0.0015$}
\caption{\textit{Top row:} $\EDens'$ from \cref{lem:cellProblem} for Hellinger--Kantorovich transport.
\textit{Middle and bottom row:} input distribution $\mu$ (a Gaussian and same data as in \cref{fig:quantizationEnergyDecrease}) as well as asymptotically optimal point densities $D$ for different values of $P=\lim_{M\to\infty}\varepsilon_M^2M$ (colour-coding from blue for $0$ to red for maximum value).}
\label{fig:asymptoticDensities}
\end{figure}

\paragraph{Acknowledgments.}
DPB acknowledges financial support from the Engineering and Physical Sciences Research Council (EPSRC) grants EP/R013527/1 Designer Microstructure via Optimal Transport Theory and EP/V00204X/1 Mathematical Theory of Polycrystalline Materials.
BS was supported by the Emmy Noether programme of the Deutsche
Forschungsgemeinschaft (DFG, German Research Foundation), project number
403056140. BW was supported by the Deutsche Forschungsgemeinschaft (DFG, German Research Foundation) under Germany’s Excellence Strategy EXC 2044-390685587, Mathematics M\"unster: Dynamics-Geometry-Structure.
 
\bibliography{references}{}
\bibliographystyle{plain}

\end{document}